\newtheorem{theo}{Theorem}[section]
\newtheorem{lemma}[theo]{Lemma}
\newtheorem{coro}[theo]{Corollary}
\newtheorem{prop}[theo]{Proposition}
\def\N{\mathbb N}
\def\R{\mathbb R}
\newcommand{\Kn}{\mathcal{K}^n}
\newcommand{\ha}{\mathcal{H}}
\newcommand{\inte}{\mathrm{int}\,}
\newcommand{\op}{\overline\partial}
\newcommand{\aff}{\mathrm{aff}}
\newcommand{\lin}{\mathrm{lin}\,}
\newcommand{\U}{\mathrm{U}}
\newcommand{\V}{\mathrm{V}}
\newcommand{\SL}{\mathrm{SL}}
\numberwithin{equation}{section}
\begin{document}

\author{K\'aroly J. B\"or\"oczky} 
\address{Alfr\'eed R\'enyi Institute of Mathematics, Hungarian Academy
  of Sciences, Reltanoda u. 13-15., H-1053 Budapest, Hungary}
\email{carlos@renyi.hu}
\author{Martin Henk}
\address{OvG-Universit\"at Magdeburg, Fakult\"at f\"ur Mathematik,
  Universit\"atsplatz 2, D-39106 Magdeburg, Germany}
\email{martin.henk@ovgu.de}

\title{Cone-volume measure and stability}
\keywords{cone-volume measure, subspace concentration condition,
U-functional,   centro-affine inequalities, log-Minkowski Problem, centroid, polytope}
\subjclass[2010]{52A40, 52B11}
\begin{abstract} 
We show  that the cone-volume measure of a convex body with centroid
at the origin satisfies the subspace concentration condition. This
implies, 
among others, a conjectured  best possible inequality for the
$U$-functional of a convex body. For both results we  provide 
stronger versions in the sense of stability inequalities.  
\end{abstract}
\date{\today}

\maketitle

\section{Introduction}
Let $\Kn$ be the set of all convex bodies in $\R^n$ having non-empty
interiors, 
i.e., $K\in \Kn$ is a convex compact subset of the
$n$-dimensional Euclidean space $\R^n$ with $\inte(K)\ne\emptyset$. As
usual, 
we denote by $\langle\cdot,\cdot\rangle$ the inner product on
$\R^n\times\R^n$ with  associated Euclidean norm
$\|\cdot\|$. $S^{n-1}\subset\R^n$ denotes the $(n-1)$-dimensional unit
sphere, i.e., $S^{n-1}=\{x\in\R^n : \|x\|=1\}$. The norm associated to
a $o$-symmetric convex body $K\in\Kn$ is denoted by $\|\cdot\|_K$,
i.e., $\|x\|_K=\min\{\lambda\geq 0: x\in\lambda\,K\}$.

For $K\in\Kn$, we write $S_K(\cdot)$ and $h_K(\cdot)$ to denote 
its surface area measure and support function, respectively,  
and $\nu_K$ to denote the Gau{\ss} map assigning  the exterior unit
normal $\nu_K(x)$ 
to an $x\in \partial_*K$, where $\partial_*K$  consists of all points in the
boundary $\partial K$ of $K$ having an unique outer normal
vector. If the origin $o$ lies in the interior of $K\in\Kn$,  
the {\em cone-volume measure of $K$} on $S^{n-1}$ is given by 
\begin{equation} 
\label{Gauss-cone}
\V_K(\omega)=\int_{\omega}\frac{h_K(u)}n\,dS_K(u)=
\int_{\nu_K^{-1}(\omega)}\frac{\langle x,\nu_K(x)\rangle}n\,d{\mathcal H}_{n-1}(x),
\end{equation}
where  $\omega\subset S^{n-1}$ is a Borel set and, in general,
${\mathcal H}_{k}(x)$ denotes the $k$-dimensional
Hausdorff-measure. Instead of ${\mathcal H}_n(\cdot)$, we also 
write $\V(\cdot)$ for the $n$-dimensional volume.  

The name cone-volume measure stems from the fact that if $K$ is a polytope with facets $F_1,\ldots,F_m$ and corresponding exterior unit normals $u_1,\ldots,u_m$, then
\begin{equation}
\V_K(\omega)=\sum_{i=1}^m\V([o,F_i])\delta_{u_i} (\omega).
\label{eq:cvm_polytope}
\end{equation}
Here $\delta_u$ is the Dirac delta measure  on
$S^{n-1}$ at $u\in S^{n-1}$, and for $x_1,\dots,x_m\in\R^n$ and 
subsets $S_1,\dots,S_L\subseteq \R^n$ we denote the convex hull of the
set $\{x_1,\dots,x_m, S_1,\dots,S_l\}$ by  $[x_1,\dots,x_m,
S_1,\dots,S_l]$.  With this notation $[o,F_i]$ is the cone with apex $o$ and
basis $F_i$.

In recent years, cone-volume measures have 
appeared and were studied in various contexts, see, e.g.,  
F. Barthe, O. Guedon, S. Mendelson and  A. Naor \cite{BGMN05}, 
K.J. B\"or\"oczky, E. Lutwak, D. Yang and G. Zhang \cite{BLYZ12,
  BLYZ13}, 
M. Gromov and V.D. Milman \cite{GrM87}, 
M. Ludwig \cite{Lud10}, 
M. Ludwig and  M. Reitzner \cite{LuR10}, 
E. Lutwak, D. Yang and G. Zhang \cite{LYZ05}, 
A. Naor \cite{Nao07}, 
A. Naor and D. Romik \cite{NaR03}, 
G.  Paouris and E. Werner \cite{PaW12}, 
A. Stancu \cite{Sta12}, 
G.~Zhu \cite{Zhu14a, Zhu14b}.

In particular, cone-volume measure are the subject of 
the  {\em logarithmic Minkowski problem}, which is the 
particular interesting limiting case $p=0$ of  the  general $L_p$-Minkowski problem -- one
of the central problems in  convex geometric analysis. It is the task: 

\smallskip \noindent 
{\em Find necessary and sufficient conditions for a Borel measure
  $\mu$ on $S^{n-1}$ to be the cone-volume measure $\V_K$ of 
  $K\in\Kn$ (with $o$ in its interior).}
\smallskip

In the  recent  paper \cite{BLYZ13}, 
K.J. B\"or\"oczky, E. Lutwak, D. Yang and G. Zhang characterize the cone-volume
measures of origin-symmetric convex bodies. In order to state their
result we have to introduce % for an measure
the  subspace concentration
condition.  We say that a Borel measure $\mu$ on $S^{n-1}$  satisfies
the {\em subspace concentration condition} if 
for any  linear subspace $L\subset \R^n$, we have
\begin{equation}
\label{scc}
\mu(L\cap S^{n-1})\leq \frac{\dim L}{n}\,\mu(S^{n-1}),
\end{equation}
and equality in (\ref{scc}) for some $L$ implies the existence of a complementary linear subspace $\widetilde{L}$ such that
\begin{equation}
\mu(\widetilde{L}\cap S^{n-1})=\frac{\dim\widetilde{L}}{n}\,\mu(S^{n-1}),
\label{eq:sccequality}
\end{equation}
and hence ${\rm supp}\,\mu\subset L\cup \widetilde{L}$, i.e., the
support of the measure ``lives'' in  $L\cup \widetilde{L}$.

Via the subspace concentration condition, the logarithmic Minkowski
problem was settled in \cite{BLYZ13}  in the
symmetric case.
  
\begin{theo}[\cite{BLYZ13}] A non-zero finite even Borel measure on
  $S^{n-1}$ is the cone-volume measure of an origin-symmetric convex
  bodies if and only if it satisfies the  subspace concentration
  condition.
\label{thm:cvm_symmetric}
\end{theo}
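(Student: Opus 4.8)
The statement is an equivalence whose two halves are of entirely different character: necessity is a sharp geometric inequality for a single body, while sufficiency is an existence result which I would obtain by a variational argument, and it is in the sufficiency part that the subspace concentration condition does the real work.

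\emph{Necessity.} Fix an origin-symmetric $K\in\Kn$ with $o\in\inte K$ and a subspace $L$, $i:=\dim L$; the cases $i=0$ (take $\widetilde L=\R^n$) and $i=n$ (take $\widetilde L=\{o\}$) are immediate, so assume $1\le i\le n-1$, and write $L^\perp$ for the orthogonal complement, $K|L$ for the orthogonal projection of $K$ onto $L$, and $g(\xi):=\mathcal H_{n-i}\bigl(K\cap(\xi+L^\perp)\bigr)$ for $\xi\in K|L$. The first step is the slicing identity
\[
 n\,\V_K(L\cap S^{n-1})=\int_{\partial(K|L)}\langle\xi,\nu_{K|L}(\xi)\rangle\,g(\xi)\,d\mathcal H_{i-1}(\xi),
\]
where $\nu_{K|L}$ is the Gau{\ss} map of $K|L$ inside $L$. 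It follows from \eqref{Gauss-cone}: the boundary points of $K$ with outer normal in $L$ fibre over the boundary of $K|L$ in $L$, the fibre over $\xi$ being $\xi+\bigl(K\cap(\xi+L^\perp)\bigr)$, which lies in $\partial K$; since $L\perp L^\perp$ this fibration is orthogonal, so $d\mathcal H_{n-1}$ factors as Lebesgue measure along the fibre times $d\mathcal H_{i-1}$ on the base, while $\langle x,\nu_K(x)\rangle=\langle\xi,\nu_{K|L}(\xi)\rangle$ along each fibre. Next I would apply the divergence theorem in $L$ to rewrite the right-hand side as $\int_{K|L}\bigl(i\,g(\xi)+\langle\nabla g(\xi),\xi\rangle\bigr)\,d\xi$, and use Fubini, $\int_{K|L}g=\V(K)$. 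Finally, $g$ is $\tfrac1{n-i}$-concave on $K|L$ by Brunn--Minkowski and even because $K=-K$; hence $t\mapsto g(t\xi)$ is non-increasing for $t\ge0$, so $\langle\nabla g(\xi),\xi\rangle\le0$ almost everywhere, which yields $n\,\V_K(L\cap S^{n-1})\le i\,\V(K)$, i.e.\ \eqref{scc}. Equality forces $g$ to be constant on $K|L$, so all slices $K\cap(\xi+L^\perp)$ have equal volume; the equality case of Brunn--Minkowski then makes them translates of a fixed body, i.e.\ $K$ is a linear image of $(K|L)\oplus(K\cap L^\perp)$, whence $\operatorname{supp}\V_K\subset L\cup\widetilde L$ for the complementary subspace $\widetilde L$ the shear produces, and $\V_K(\widetilde L\cap S^{n-1})=\V(K)-\V_K(L\cap S^{n-1})=\tfrac{n-i}{n}\V(K)$, which is \eqref{eq:sccequality}.

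\emph{Sufficiency.} Given a non-zero finite even $\mu$ satisfying the subspace concentration condition (which already forces $\operatorname{supp}\mu$ to span $\R^n$), I would study the extremal problem of maximizing $\Phi(Q):=\int_{S^{n-1}}\log h_Q\,d\mu$ over origin-symmetric $Q\in\Kn$ with $\V(Q)=1$. Granting a maximizer $Q_0$, the first variation closes the argument: for a continuous even $g$ and small $|t|$ let $Q_t$ be the Wulff shape with support data $h_{Q_0}e^{tg}$ (again origin-symmetric); then $\tfrac{d}{dt}\V(Q_t)|_{0}=n\int g\,d\V_{Q_0}$ (Aleksandrov's variational lemma) and $\tfrac{d}{dt}\Phi(Q_t)|_{0}=\int g\,d\mu$, and since $t\mapsto\Phi(\V(Q_t)^{-1/n}Q_t)=\Phi(Q_t)-\tfrac{\mu(S^{n-1})}{n}\log\V(Q_t)$ is maximal at $t=0$, its derivative vanishes, giving $\int g\,d\mu=\mu(S^{n-1})\int g\,d\V_{Q_0}$ for all even $g$, hence $\mu=\mu(S^{n-1})\,\V_{Q_0}$. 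Rescaling, $K:=\mu(S^{n-1})^{1/n}Q_0$ is origin-symmetric with $\V_K=\mu$.

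The crux — the one genuinely hard step — is the existence and non-degeneracy of $Q_0$: a maximizing sequence $Q_j$ with $\V(Q_j)=1$ could a priori become long and thin, converging to a lower-dimensional set on which $\Phi=-\infty$. This is exactly where the subspace concentration condition is needed. If $\mu$ is irreducible, meaning \eqref{scc} is strict for every proper nontrivial $L$, then analysing $Q_j$ in John position one records the blocks of comparable semi-axes, producing a flag $\{0\}=L_0\subsetneq L_1\subsetneq\cdots\subsetneq L_r=\R^n$ and scales $\rho_1^{(j)}\gg\cdots\gg\rho_r^{(j)}$; the volume constraint is $\sum_m(\dim L_m-\dim L_{m-1})\log\rho_m^{(j)}=O(1)$, and after substituting it, to leading order $\Phi(Q_j)=\sum_{m=1}^{r-1}\bigl(\mu(L_m\cap S^{n-1})-\tfrac{\dim L_m}{n}\mu(S^{n-1})\bigr)\log(\rho_m^{(j)}/\rho_{m+1}^{(j)})+o(1)$; strictness of \eqref{scc} makes each bracket negative while each logarithm tends to $+\infty$, so $\Phi(Q_j)\to-\infty$ — impossible for a maximizing sequence. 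Hence $Q_j$ stays bounded, Blaschke selection gives $Q_0$, and $\V(Q_0)=1$ makes it a genuine body with $\Phi(Q_0)>-\infty$. If instead $\mu$ is reducible — equality in \eqref{scc} for some proper $L$ — then by the equality clause $\operatorname{supp}\mu\subset L\cup\widetilde L$ for complementary proper subspaces, the restrictions of $\mu$ to $L$ and to $\widetilde L$ inherit the subspace concentration condition, and I would finish by induction on the dimension (the base $n=1$ being trivial), assembling $K$ from the two lower-dimensional solutions. A few routine points I would suppress: Hausdorff-continuity of $\Phi$ and $\V$ on origin-symmetric bodies, and the justification of the variational formulas for a possibly non-smooth $Q_0$ via Aleksandrov's convergence lemma.
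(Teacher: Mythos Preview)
This theorem is not proved in the present paper; it is quoted from \cite{BLYZ13}, so there is no proof here against which to compare your proposal. That said, your necessity argument is essentially the origin-symmetric special case of what the paper does prove in Section~\ref{proof1} for Theorem~\ref{Henk-Linke-body} (the centroid version): the slicing identity you write is Proposition~\ref{VKf}, and your observation that $g$ is even and $\tfrac1{n-i}$-concave, hence $\langle\nabla g(\xi),\xi\rangle\le0$, is exactly the symmetric shortcut the paper alludes to in the paragraph preceding Proposition~\ref{fcentroid}. One technical point you pass over and the paper handles with care: $\nabla g$ need not be bounded near $\partial(K|L)$, so the divergence theorem cannot be applied directly on $K|L$; the paper works instead on the shrunken sets $e^{-1/m}(K|L)$, invokes a Gau\ss--Green theorem for Lipschitz fields, and passes to the limit via Corollary~\ref{ray-limit} and Lemma~\ref{nablafL1}.

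Your sufficiency sketch follows the variational scheme of \cite{BLYZ13} and identifies the correct mechanism, but since the present paper does not reproduce that argument there is nothing here to compare it with. Two places in your sketch would need genuine work to become a proof: the flag $L_0\subsetneq\cdots\subsetneq L_r$ and the scales $\rho_m^{(j)}$ depend on $j$, so one must pass to subsequences and to limiting subspaces before one can invoke strict inequality in \eqref{scc}; and the asymptotic $\Phi(Q_j)=\sum_m(\cdots)\log(\rho_m^{(j)}/\rho_{m+1}^{(j)})+o(1)$ is stated too precisely --- what one actually obtains is an upper bound of that shape up to an additive $O(1)$, which is still enough. The inductive treatment of the reducible case is correct in outline.
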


This result  was proved earlier for discrete measures  on $S^1$, i.e.,
for polygons, by A. Stancu \cite{Sta02, Sta03}.  
For cone-volume measures of origin-symmetric polytopes (cf.~\eqref{eq:cvm_polytope}) the necessity
of \eqref{scc} was independently shown by   M. Henk, A. Sch\"urmann
and J.M.Wills \cite{HSW05} and 
B. He, G. Leng and K. Li \cite{HLL06}.

%\todo{Example of  the simplex is removed, because it is irrelevant after Zhu's result}

%It is easy to see that the cone-volume measure of a simplex 
%does not satisfy the  subspace concentration condition if the origin is
%very close to a vertex. In fact, the following simple observation holds.

%\begin{rem}
%\label{exa-simplex-cond}
%If $K$ is a simplex, then its cone-volume
%measure satisfies the  subspace concentration condition if and only if
%the origin is contained in
%the convex hull of the centroids of the facets of $K$.
%\end{rem}

We recall that the centroid of a $k$-dimensional convex compact set
$M\subset \R^n$ is defined as 
$$
c(M)={\mathcal H}_k(M)^{-1}\int_Mx\,d{\mathcal H}_k(x).
$$
The centroid seems also be the right and natural position of the origin
in order to extend Theorem \ref{thm:cvm_symmetric}  to arbitrary
convex bodies. In fact,   in \cite{HeL14} it was shown by  M. Henk and E. Linke
that the necessity part of Theorem  \ref{thm:cvm_symmetric} also holds for
polytopes with centroid at the origin, i.e.,  

\begin{theo}[\cite{HeL14}]
\label{Henk-Linke-polytope}
Let  $K\in \Kn$ be a  polytope with centroid at the origin.  Then its cone-volume
measure $\V_K$ satisfies the  subspace concentration condition.
\end{theo}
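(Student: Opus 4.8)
The plan is to prove the statement for polytopes first, since the general case of a convex body can be approximated, but in fact I expect the key difficulties already appear at the polytopal level, so I will concentrate there. Let $K\in\Kn$ be a polytope with facets $F_1,\dots,F_m$ and exterior unit normals $u_1,\dots,u_m$, and let $L\subset\R^n$ be a linear subspace with $\dim L=k$, $1\le k\le n-1$. By \eqref{eq:cvm_polytope} we have $\V_K(L\cap S^{n-1})=\sum_{i: u_i\in L}\V([o,F_i])$, so we must bound the total volume of those cones $[o,F_i]$ whose normal lies in $L$. The geometric meaning of $u_i\in L$ is that the facet $F_i$ is parallel to the complementary directions; more precisely, writing $E=L^\perp$, the condition $u_i\in L$ says that $F_i$ spans an affine hyperplane containing a translate of $E$. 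The first step is therefore to set up the right decomposition of $K$ relative to the pair $(L,L^\perp)$: project $K$ orthogonally onto $L$, look at the fibers $K\cap(x+L^\perp)$, and express both $\V(K)$ and the partial sum $\sum_{u_i\in L}\V([o,F_i])$ as integrals over the projection $K|L$.

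The second and central step is to exploit the centroid hypothesis. The condition $c(K)=o$ means $\int_K x\,d\V(x)=0$; splitting $x$ into its $L$-component and $L^\perp$-component and using Fubini over the fibers, this gives one linear constraint that must be played against the inequality we want. The natural device, following the $o$-symmetric case of \cite{BLYZ13} and the polytopal centroid case, is to introduce a one-parameter family of volume-preserving linear maps that fix $L^\perp$ and scale $L$ (or vice versa), i.e. $A_t$ acting as $e^{(n-k)t}$ on a chosen splitting of $L$-coordinates and $e^{-kt}$ on $L^\perp$, arranged so $\det A_t=1$; then consider $\V(A_tK)=\V(K)$ together with the requirement that the centroid of $A_tK$ stays controlled, or more directly differentiate the relevant volume functional at $t=0$. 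Concretely, I would look at the function $t\mapsto \V(\{x\in A_tK: \text{something}\})$ or use the first variation of volume under the $L_0$ (logarithmic) mixed-volume, whose derivative at $t=0$ produces exactly $\sum_{u_i\in L}\V([o,F_i]) - \frac{k}{n}\V(K)$ up to a positive factor. Showing this derivative has the right sign is where the centroid assumption enters: it guarantees that the relevant convexity/monotonicity of $t\mapsto\V(A_tK\cap K)$ or of an associated $\log$-concave integral has its critical point at $t=0$.

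A cleaner route, and the one I would actually pursue, is the inductive/slicing argument: write $K|L^\perp$-fibers as above, apply the one-dimensional or lower-dimensional version of the inequality to each fiber together with a careful bookkeeping of which facets contribute, and then integrate. In the base of the slicing one needs the elementary fact that for a $1$-dimensional segment with centroid at the origin, the two endpoint ``cones'' each have measure at most $\tfrac12$ of the total, with equality characterization forcing symmetry — this is the seed of the equality discussion. Propagating this through the integration, the term $\frac{\dim L}{n}\V(K)$ emerges from combining the fiberwise bound $\frac{1}{\dim L^\perp +1}$-type estimates with the centroid constraint $\int x_{L^\perp}=0$ on each slice.

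\medskip
The main obstacle I anticipate is the \emph{equality case}. The inequality \eqref{scc} itself should follow from a fairly robust convexity-plus-centroid argument, but proving that equality forces the existence of a complementary subspace $\widetilde L$ with \eqref{eq:sccequality} and $\operatorname{supp}\V_K\subset L\cup\widetilde L$ requires tracking the equality conditions in every step of the slicing/variational argument and showing they rigidify the body into a direct sum $K=(K\cap L)\oplus(K\cap L^\perp)$ up to the appropriate linear image. In particular one must rule out partial degeneracies where equality is attained ``on average'' over the fibers without each fiber being extremal; here the linearity of the centroid constraint is exactly what upgrades an average equality to a pointwise one. I expect this rigidity analysis, rather than the inequality, to be the technical heart of the proof, and it is also what the stability version promised in the abstract must quantify.
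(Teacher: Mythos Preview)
Your setup is right: project onto $L$, look at the section function $f(x)=\mathcal{H}_{n-d}(K\cap(x+L^\perp))$ on $K|L$ (with $d=\dim L$), and use the centroid constraint in the Fubini form $\int_{K|L} f(x)\,x\,d\mathcal{H}_d(x)=0$. But neither of the two mechanisms you propose actually closes. In the variational route, with $\det A_t=1$ you have $\V(A_tK)\equiv\V(K)$, so there is nothing to differentiate; and neither $\V(A_tK\cap K)$ nor the $L_0$-mixed volume hands you the quantity $\V_K(L\cap S^{n-1})-\tfrac{d}{n}\V(K)$ without a computation that, done honestly, is the divergence identity below. The ``cleaner'' inductive slicing route is more seriously off: the fibers live in $L^\perp$, not $L$, so applying the lower-dimensional inequality fiberwise is the wrong reduction; the one-dimensional seed is degenerate (a segment with centroid at $o$ is symmetric, equality always holds); and the factor $\tfrac{1}{\dim L^\perp+1}$ you mention does not combine to $\tfrac{d}{n}$.

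What drives the proof in \cite{HeL14} (and in this paper's extension to bodies) is a divergence-theorem identity for the vector field $F(x)=f(x)\,x$ on $K|L$:
\[
n\,\V_K(L\cap S^{n-1})\;=\;d\,\V(K)\;+\;\int_{K|L}\langle\nabla f(x),x\rangle\,d\mathcal{H}_d(x),
\]
so the entire question becomes the sign of $\int_{K|L}\langle\nabla f,x\rangle\,d\mathcal{H}_d$. The input you are missing is that $f$ is \emph{log}-concave by Brunn--Minkowski, whence pointwise $\langle\nabla\ln f(x),x\rangle\le\langle g(o),x\rangle$ for the subgradient $g(o)$ of $\ln f$ at $o$; multiplying by $f(x)$ and integrating against the centroid identity kills the right-hand side and gives $\int\langle\nabla f,x\rangle\le 0$. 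Equality then forces $\ln f$ to be affine, hence constant (since $f^{1/(n-d)}$ is concave and bounded on $K|L$), so all fibers are translates of $K\cap L^\perp$ by the Brunn--Minkowski equality case; tracking the fiber centroids produces the complementary subspace $\widetilde L$. Contrary to your expectation, once the log-concavity mechanism is in place the equality analysis is short --- the identity is the heart, not the rigidity.
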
  

Our first result is an extension of Theorem \ref{Henk-Linke-polytope}
to convex bodies. 
\begin{theo}
\label{Henk-Linke-body}
Let  $K\in\Kn$ with centroid at the origin. Then its cone-volume
measure satisfies the  subspace concentration condition.
\end{theo}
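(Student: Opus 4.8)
The plan is to deduce the general statement from the polytopal case (Theorem~\ref{Henk-Linke-polytope}) by an approximation argument, so the whole issue is to control what happens to the cone-volume measure and to the centroid under a suitable limiting process. Fix $K\in\Kn$ with $c(K)=o$ and a linear subspace $L$ with $\dim L = \ell$. First I would choose a sequence of polytopes $P_j\to K$ in the Hausdorff metric. These polytopes will in general not have their centroid at the origin, so I would translate them: set $Q_j = P_j - c(P_j)$. Since $c(P_j)\to c(K)=o$, we still have $Q_j\to K$, and now Theorem~\ref{Henk-Linke-polytope} applies to each $Q_j$, giving
\begin{equation*}
\V_{Q_j}(L\cap S^{n-1})\le \frac{\ell}{n}\,\V_{Q_j}(S^{n-1}) = \frac{\ell}{n}\,\V(Q_j).
\end{equation*}
The right-hand side converges to $\frac{\ell}{n}\V(K)=\frac{\ell}{n}\V_K(S^{n-1})$, so the inequality \eqref{scc} for $K$ will follow once I know that the left-hand side converges to $\V_K(L\cap S^{n-1})$, or at least does not drop in the limit below that value. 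This lower semicontinuity is the crux: $L\cap S^{n-1}$ is a closed set, and while the cone-volume measures $\V_{Q_j}$ converge weakly to $\V_K$ (because $h_{Q_j}\to h_K$ uniformly and $S_{Q_j}\to S_K$ weakly, and the origin stays in the interior), weak convergence only gives $\limsup_j \mu_j(C)\le\mu(C)$ for closed $C$, which is the wrong direction. So I need the reverse: $\V_K(L\cap S^{n-1})\le\liminf_j \V_{Q_j}(L\cap S^{n-1})$.

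To get that I would split $\V_K(S^{n-1})$ as $\V_K(L\cap S^{n-1}) + \V_K(S^{n-1}\setminus(L\cap S^{n-1}))$ and apply the $\limsup$-inequality to the second, \emph{open}, piece would again go the wrong way, so instead I would cover $S^{n-1}\setminus L$ from inside by closed sets $\{u : \operatorname{dist}(u,L)\ge\varepsilon\}$, apply weak convergence ($\limsup$) there, and combine with convergence of the total masses $\V_{Q_j}(S^{n-1})=\V(Q_j)\to\V(K)$. Concretely,
\begin{equation*}
\liminf_j \V_{Q_j}(L\cap S^{n-1}) \ge \liminf_j\bigl(\V(Q_j) - \V_{Q_j}(\{u:\operatorname{dist}(u,L)\ge\varepsilon\})\bigr)\ge \V(K) - \V_K(\{u:\operatorname{dist}(u,L)\ge\varepsilon\}),
\end{equation*}
and letting $\varepsilon\downarrow 0$ the subtracted term increases to $\V_K(S^{n-1}\setminus L) = \V(K) - \V_K(L\cap S^{n-1})$, yielding exactly $\liminf_j \V_{Q_j}(L\cap S^{n-1})\ge \V_K(L\cap S^{n-1})$. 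Combined with the displayed bound from the polytopal theorem, this proves \eqref{scc} for $K$.

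It remains to handle the equality case. Suppose $\V_K(L\cap S^{n-1}) = \frac{\ell}{n}\V(K)$. Here approximation alone is too lossy — equality for $K$ need not be reflected by the $Q_j$ — so I would argue more structurally. The natural approach: show $\operatorname{supp}\V_K$ must be contained in $L\cup L'$ for some complementary subspace $L'$, then verify the companion equality \eqref{eq:sccequality} for $L'$. I expect the main obstacle to be precisely this equality characterization for non-polytopal bodies. One route is to write $K' = K\cap L$ (an $\ell$-dimensional body in $L$) and note, via \eqref{Gauss-cone} and a Fubini-type decomposition of the integral over $\partial K$, that $\V_K(L\cap S^{n-1})$ equals the total cone-volume contributed by the part of $\partial K$ whose normals lie in $L$; equality with $\frac{\ell}{n}\V(K)$ forces $K$ to be (up to the relevant measure-zero sets) a "cylinder-like" union over the two subspaces, exactly as in the polytopal proof of \cite{HeL14}. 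Alternatively, and probably cleaner, I would reduce the equality case to Theorem~\ref{thm:cvm_symmetric} / Theorem~\ref{Henk-Linke-polytope} by an approximation that respects the extremal structure: approximate $K$ by polytopes that already have $\operatorname{supp}\V_{P_j}\subset L\cup L'$. In any case, the inequality part is routine once the semicontinuity above is in place; the equality part is where the real work lies, and I would model it on the corresponding step in \cite{HeL14}, carried over to bodies by the same inside-approximation of $S^{n-1}\setminus L$ by closed sets.
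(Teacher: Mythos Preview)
Your approximation argument for the inequality \eqref{scc} contains a genuine error: the first inequality in your displayed chain is false. You write
\[
\V_{Q_j}(L\cap S^{n-1}) \ge \V(Q_j) - \V_{Q_j}(\{u:\operatorname{dist}(u,L)\ge\varepsilon\}),
\]
but since $(L\cap S^{n-1})\cup\{u:\operatorname{dist}(u,L)\ge\varepsilon\}$ omits the open strip $\{u:0<\operatorname{dist}(u,L)<\varepsilon\}$, the inequality actually goes the other way. Concretely, let $K=[-1,1]^n$, $L=\R e_1$, and let $Q_j$ be $K$ rotated by a small angle $\theta_j\to 0$ in the $e_1$--$e_2$ plane. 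Then $c(Q_j)=o$, $Q_j\to K$, and $\V_{Q_j}(L\cap S^{n-1})=0$ for every $j$ (no facet normal of $Q_j$ lies on $\R e_1$), while for $\varepsilon$ fixed and $\theta_j$ small one has $\V(Q_j)-\V_{Q_j}(\{u:\operatorname{dist}(u,L)\ge\varepsilon\})=\tfrac1n\V(K)>0$. So the chain collapses at step one, and your conclusion $\liminf_j \V_{Q_j}(L\cap S^{n-1})\ge \V_K(L\cap S^{n-1})$ is false in this example ($0$ versus $\tfrac1n\V(K)$). This is not a repairable slip: for generic approximating polytopes the facet normals miss the lower-dimensional set $L\cap S^{n-1}$ entirely, so no Portmanteau-type argument can produce the lower semicontinuity you need.

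The paper avoids approximation altogether and argues directly for a general body. It studies the section function $f(x)=\mathcal{H}_{n-d}\big(K\cap(x+L^\bot)\big)$ on $K|L$, which is log-concave by Brunn--Minkowski, and proves via the Gau\ss--Green theorem the identity
\[
n\,\V_K(L\cap S^{n-1})=d\,\V(K)+\int_{K|L}\langle \nabla f(x),x\rangle\,d\mathcal{H}_d(x).
\]
The centroid hypothesis $c(K)=o$ then gives $\int_{K|L}\langle \nabla f(x),x\rangle\,d\mathcal{H}_d(x)\le 0$ by a short log-concavity argument (compare $\langle g(z),z\rangle\le\langle g(o),z\rangle$ and use \eqref{eq:centroid}), which yields \eqref{scc} at once. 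The equality case falls out of the same framework: equality forces $f$ to be constant on $K|L$, and then the equality case of Brunn--Minkowski shows that all sections $K\cap(x+L^\bot)$ are translates of one another with centroids lying in a common affine $d$-plane, so $K=C+M$ with $C\subset L^\bot$ and $M$ complementary. Your sketch for the equality case (``model it on \cite{HeL14}'') is in the right spirit, but again the paper does it directly for bodies rather than through polytopal approximation.
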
  
 
While the  subspace concentration condition is also the sufficiency
property  to characterize  cone-volume measures among even non-trivial Borel
measures,  the cone-volume measure of a convex body $K\in \Kn$ whose
centroid is the origin should satisfy some extra properties. For
example, in Proposition~\ref{hemi-sphere} 
we prove that the measure of any open hemisphere is at least
$\frac1{2n}$. 

%For polytopes with centroid at the origin, the subspace concentration
%condition is only a real restriction if there are $k$-planes $L$
%containing more than $k$ outer unit normal vectors of the polytope
%(cf.~\eqref{eq:sccequality}). 

If the origin is the not the cetroid of the convex body, then the subspace
concentration condition may not hold anymore. %\todo{Paragraph shortened}.
In fact, it was recently shown by  G.~Zhu
\cite{Zhu14a} that for unit vectors $u_1,\dots,u_m\in S^{n-1}$ in general position, $m\geq
n+1$, and  arbitrary positive numbers 
$\gamma_1,\dots,\gamma_m$ there exists a polytope $P$ with outer unit
normals $u_i$ with $\V_P(\{u_i\})=\gamma_i$, $1\leq i\leq m$. In other
words, Zhu settled the logarithmic Minkowski problem for discrete
measures whose support is in general position. In general,
the centroid of such a polytope $P$ is not the origin, and a full characterization of
cone-volume measures of arbitrary polytopes/bodies is still a
challenging and important problem. 

We note that \eqref{eq:sccequality} is a kind of  condition   
on the cone-volume measure which is 
independent of the choice of the origin. 

\begin{lemma}
\label{general-split}
If $K\in\Kn$ with $o\in\inte K$, and 
${\rm supp}\,\V_K\subset L\cup\widetilde{L}$ for the proper complementary linear subspaces
 $L,\widetilde{L}\subset\R^n$, then  
$$
\V_K(L\cap S^{n-1})= \frac{\dim L}{n}\,\mu(S^{n-1}).
$$
\end{lemma}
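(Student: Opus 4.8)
Write $k=\dim L$, so $1\le k\le n-1$ and $\dim\widetilde L=n-k$. First I would note that the right-hand side should read $\frac{k}{n}\,\V_K(S^{n-1})$, and that $\V_K(S^{n-1})=\V(K)$: taking $\omega=S^{n-1}$ in \eqref{Gauss-cone} and applying the divergence theorem to the field $x\mapsto x$ gives $\V_K(S^{n-1})=\frac1n\int_{\partial K}\langle x,\nu_K(x)\rangle\,d{\mathcal H}_{n-1}(x)=\V(K)$. So it suffices to prove $\V_K(L\cap S^{n-1})=\frac{k}{n}\,\V(K)$. The plan is: (i) reduce to the case $\widetilde L=L^\perp$ with $L,\widetilde L$ coordinate subspaces; (ii) show that the support condition forces $K$ to be a product; (iii) conclude by Fubini and the divergence theorem.

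For (i), I would use that for $T\in\mathrm{GL}(n,\R)$ the cone-volume measure transforms as $\V_{TK}(\omega)=|\det T|\,\V_K(\psi_T(\omega))$ with $\psi_T(u)=T^{\top}u/\|T^{\top}u\|$ --- immediate from \eqref{eq:cvm_polytope} for polytopes and by approximation in general. Choosing $T$ with $T^{\top}(\R^k\times\{0\})=L$ and $T^{\top}(\{0\}\times\R^{n-k})=\widetilde L$ and passing from $K$ to $TK$ multiplies both $\V_K(L\cap S^{n-1})$ and $\V(K)$ by $|\det T|$, so it is enough to prove the identity for $TK$ (which still satisfies the hypotheses of the lemma), and now $L=\R^k\times\{0\}$ and $\widetilde L=\{0\}\times\R^{n-k}$ are orthogonal complements. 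From now on assume this; write $x=(x',x'')$ accordingly and let $\pi',\pi''$ be the orthogonal projections onto $L,\widetilde L$.

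For (ii): since $o\in\inte K$ we have $h_K>0$, hence ${\rm supp}\,S_K={\rm supp}\,\V_K\subseteq(L\cup\widetilde L)\cap S^{n-1}$. I claim then that $K=A\times B$ with $A:=\pi'(K)\subseteq L$, $B:=\pi''(K)\subseteq\widetilde L$ (both full-dimensional, as $L,\widetilde L$ are proper). The inclusion $K\subseteq A\times B$ is trivial. If it were strict, then joining a point of $\inte(A\times B)\setminus K$ to a point of $\inte K$ yields $p\in\partial K\cap\inte(A\times B)$; since $\partial K\cap\inte(A\times B)$ is nonempty and relatively open in $\partial K$, whereas $\partial K\setminus\partial_*K$ and $\{x\in\partial_*K:\nu_K(x)\notin{\rm supp}\,S_K\}$ are ${\mathcal H}_{n-1}$-null, one may take $p\in\partial_*K$ with $\nu_K(p)\in L\cup\widetilde L$, say $\nu_K(p)=(u',0)$, $u'\in S^{k-1}$. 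Then, writing $h_A$ for the support function of $A$, $h_A(u')=\max_{x\in K}\langle u',\pi'x\rangle=h_K((u',0))=\langle u',\pi'p\rangle$, so $\pi'p$ lies on a supporting hyperplane of $A$, i.e.\ $\pi'p\in\partial A$; but $p\in\inte(A\times B)$ forces $\pi'p\in\inte A$ --- a contradiction. The case $\nu_K(p)\in\widetilde L$ is symmetric. I expect this step --- excluding that $K$ is strictly smaller than the product of its projections --- to be the main obstacle.

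For (iii): with $K=A\times B$, up to an ${\mathcal H}_{n-1}$-null set $\nu_K^{-1}(L\cap S^{n-1})=\partial_*A\times B$, and there $\nu_K(x',x'')=(\nu_A(x'),0)$, whence $\langle x,\nu_K(x)\rangle=\langle x',\nu_A(x')\rangle$. Since $L\perp\widetilde L$, the measure ${\mathcal H}_{n-1}$ on $\partial_*A\times B$ equals the product ${\mathcal H}_{k-1}\otimes{\mathcal H}_{n-k}$, so Fubini together with the divergence theorem in $L\cong\R^k$ gives
\begin{align*}
\V_K(L\cap S^{n-1})&=\frac1n\int_{\partial_*A\times B}\langle x',\nu_A(x')\rangle\,d{\mathcal H}_{n-1}(x)\\
&=\frac{{\mathcal H}_{n-k}(B)}{n}\int_{\partial A}\langle x',\nu_A(x')\rangle\,d{\mathcal H}_{k-1}(x')\\
&=\frac{k}{n}\,{\mathcal H}_k(A)\,{\mathcal H}_{n-k}(B)=\frac{k}{n}\,\V(K),
\end{align*}
using ${\mathcal H}_k(A)\,{\mathcal H}_{n-k}(B)=\V(A\times B)=\V(K)$. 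As $k=\dim L$ and $\V(K)=\V_K(S^{n-1})$, this is the assertion. The only routine points left are the $\mathrm{GL}(n,\R)$ transformation rule for $\V_K$ and the handling of the ${\mathcal H}_{n-1}$-null sets of singular boundary points of $K$ and of the edge set $\partial A\times\partial B$ of $A\times B$, where the outer normal fails to lie in $L\cup\widetilde L$.
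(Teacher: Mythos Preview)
Your proof is correct and follows the same two-step skeleton as the paper's argument---first show that $K$ decomposes as a sum of lower-dimensional pieces, then compute the cone-volume by Fubini---but the implementations differ.

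The paper dispatches step (ii) in one line by invoking Minkowski's uniqueness theorem: since ${\rm supp}\,S_K\subset L\cup\widetilde L$, one writes down a body $M+\widetilde M$ with $M$ in an affine subspace orthogonal to $L$ and $\widetilde M$ in one orthogonal to $\widetilde L$ having the same surface area measure, and Minkowski's theorem forces $K=M+\widetilde M$ up to translation; Fubini then finishes. No preliminary linear transformation is needed, and the non-orthogonal case is handled directly. Your route instead first straightens $L,\widetilde L$ to orthogonal coordinate subspaces via the $\mathrm{GL}(n)$-covariance of $\V_K$, and then replaces the appeal to Minkowski's uniqueness by a self-contained boundary-point argument showing $K=\pi'(K)\times\pi''(K)$. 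What you gain is an elementary proof that avoids the black box of Minkowski's existence/uniqueness; what you pay is the extra reduction step and the bookkeeping with the transformation rule $\V_{TK}(\omega)=|\det T|\,\V_K(\psi_T(\omega))$ and the $\mathcal H_{n-1}$-null exceptional sets. Both are clean; the paper's version is shorter, yours is more explicit.
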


Let us provide the simple argument leading to Lemma~\ref{general-split}. It follows from Minkowski's  uniqueness theorem that $K=M+\widetilde{M}$ where 
$M$, $\widetilde{M}$ are
contained in affine spaces orthogonal to $L$, $\widetilde{L}$, respectively. By
Fubini's theorem, we conclude  \eqref{eq:sccequality} for $\V_K$ and
the subspaces $L,\widetilde{L}$.

For a convex body $K$ containing the origin in its interior, 
E. Lutwak, D. Yang and  G. Zhang \cite{LYZ01} defined the $\SL(n)$
invariant quantity $\U(K)$ as  an integral over subsets
$(u_1,\ldots,u_n)\in S^{n-1}\times \cdots\times S^{n-1}$, by
$$
\U(K)=\left(\int_{u_1\wedge\ldots\wedge u_n\neq 0}d\V_K(u_1)\cdots d\V_K(u_n)\right)^\frac{1}{n},
$$
where $u_1\wedge\ldots\wedge u_n\neq 0$ means that the vectors
$u_1,\ldots,u_n$ are linearly independent. The $U$-functional has been proved useful in
obtaining strong 
inequalities for the volume of projection bodies \cite{LYZ01}. For 
information on projection bodies we refer to the books by Gardner
\cite{Gar95} and Schneider \cite{Sch93}, and for more
information on the importance of centro-affine functionals we refer to
C. Haberl and L. Parapatits  \cite{HaberlParapatits:2013, LuR10} and the references within.

We readily have $\U(K)\leq V(K)$, and equality holds if and only if
$V_K(L\cap S^{n-1})=0$ 
for any non-trivial subspace of $\R^n$
according to K.J. B\"or\"oczky, E. Lutwak, D. Yang and  G. Zhang
\cite{BLYZ14}. As a consequence of Theorem \ref{Henk-Linke-body} we prove here a
lower bound on $\U(K)$ in terms of $V(K)$ which was conjectured in 
 \cite{BLYZ14}.

\begin{theo}
\label{Henk-Linke-UK}
Let $K\in\Kn$ with centroid at the origin. Then 
$$
U(K)\geq \frac{(n!)^{1/n}}{n}\,V(K),
$$
with equality if and only if $K$ is a parallepiped.
\end{theo}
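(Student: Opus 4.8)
The plan is to combine the subspace concentration condition from Theorem~\ref{Henk-Linke-body} with an optimization argument for the integral defining $\U(K)$. First I would expand the $n$-fold integral: writing $\mu=\V_K$ with total mass $\mu(S^{n-1})=V(K)$, we have
\[
\U(K)^n=\int_{u_1\wedge\cdots\wedge u_n\neq 0}d\mu(u_1)\cdots d\mu(u_n)
= \mu(S^{n-1})^n - \int_{u_1\wedge\cdots\wedge u_n = 0}d\mu(u_1)\cdots d\mu(u_n).
\]
The complement integral counts $n$-tuples lying in a common hyperplane, so the task is to show $\U(K)^n\geq (n!/n^n)V(K)^n$, i.e. to bound how much mass can fall on linearly dependent tuples. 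The key reduction is that for \emph{any} finite measure $\mu$ on $S^{n-1}$ satisfying the subspace concentration condition, $\U$ (treating the above as a definition of a functional of $\mu$) is minimized relative to the total mass exactly when $\mu$ is, up to normalization, the cone-volume measure of a parallelepiped — that is, $\mu$ is concentrated with mass $V(K)/n$ on each of $n$ pairs of antipodal points spanning $\R^n$ (or more generally $n$ one-dimensional subspaces in general position, but the subspace concentration condition forces exactly equal weights). So the real content is an extremal problem for measures subject to \eqref{scc}.

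The main steps I would carry out: (1) Show that among all measures with fixed total mass satisfying the subspace concentration condition, the infimum of $\U$ is achieved and characterize the minimizers. The natural approach is a "pushing mass together" argument: if $\mu$ has support containing more than $n$ one-dimensional subspaces, or unequal masses on complementary pieces, one can redistribute mass toward a lower-dimensional flag-type configuration without increasing $\U$ while preserving \eqref{scc}; this is delicate precisely because \eqref{scc} must be maintained. A cleaner route: prove the inequality directly via a recursive/inductive argument on $n$, splitting $S^{n-1}$ according to a subspace $L$ that is "tight" or nearly tight for \eqref{scc}, applying the induction hypothesis inside $L$ and inside a complement, and using the AM-GM–type bound on the resulting product $\prod_i (\dim L_i / n)$ over a flag, which is maximized (as a product of the jumps $d_i - d_{i-1}$ with $\sum (d_i-d_{i-1})=n$ at $n$ equal jumps of size $1$, giving the factor $n!$ after accounting for orderings). (2) Assemble these pieces into the bound $\U(K)^n \ge (n!/n^n) V(K)^n$. (3) Trace the equality case: equality in AM-GM forces all the jumps in the flag to be $1$, hence $\mu$ concentrated on $n$ lines with equal mass, and then Lemma~\ref{general-split} together with Minkowski's uniqueness theorem (as in the paragraph after Lemma~\ref{general-split}, applied repeatedly) forces $K$ to split as a Minkowski sum of $n$ segments, i.e. a parallelepiped; conversely a parallelepiped centered at its centroid (the origin) visibly attains equality since its cone-volume measure puts mass $V(K)/n$ on each of the $n$ facet-normal antipodal pairs.

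I expect the main obstacle to be Step~(1): establishing that \eqref{scc} alone (not the full structure of a cone-volume measure) is enough to force the extremal configuration, and in particular handling the equality subcase of \eqref{scc} where the support is forced into $L\cup\widetilde L$. In that case one wants to say $\U$ factors as a product over the two blocks, reducing to lower dimension, and one must check the subspace concentration condition is inherited (suitably rescaled) by the restrictions $\mu\restriction L$ and $\mu\restriction\widetilde L$ — this uses \eqref{eq:sccequality} and the fact that a subspace $L'$ of $\R^n$ intersects the support only through $L'\cap L$ and $L'\cap\widetilde L$. Once the reduction to a product over a maximal flag of "tight" subspaces is in place, the remaining inequality is the elementary fact that $\prod (d_i - d_{i-1})! \cdot \binom{n}{d_1-d_0,\dots}$-type bookkeeping is dominated by $n!$, with equality iff all steps are size one. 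Since the problem also asks for the stability version, I would, after proving the clean inequality, quantify the loss in AM-GM and in the near-tightness of \eqref{scc} to get a stability estimate, but the core of the argument is the extremal characterization above.
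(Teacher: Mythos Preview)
Your plan is more elaborate than necessary for the inequality, and the route you sketch has a real gap.  The paper's argument is a one-line recursion on the number of integration variables, not an induction on the ambient dimension.  Define for $m\in\{1,\ldots,n\}$
\[
\sigma_m(K)^m=\int_{u_1\wedge\cdots\wedge u_m\neq 0}d\V_K(u_1)\cdots d\V_K(u_m),
\]
so that $\sigma_1(K)=\V(K)$ and $\sigma_n(K)=\U(K)$.  Integrating out the last variable gives
\[
\sigma_{m+1}(K)^{m+1}=\int_{u_1\wedge\cdots\wedge u_m\neq 0}
\bigl(\V(K)-\V_K(S^{n-1}\cap\lin\{u_1,\ldots,u_m\})\bigr)\,d\V_K(u_1)\cdots d\V_K(u_m),
\]
and now the subspace concentration inequality \eqref{scc} is applied \emph{pointwise inside the integral}: the integrand is at least $(1-\tfrac{m}{n})\V(K)$, so $\sigma_{m+1}^{m+1}\geq(1-\tfrac{m}{n})\V(K)\,\sigma_m^m$.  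Chaining $m=1,\ldots,n-1$ yields $\U(K)^n\geq\frac{n!}{n^n}\V(K)^n$ immediately.  No flag, no AM--GM, no optimization over measures is needed.

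By contrast, your ``split along a tight subspace $L$ and induct on $n$'' step does not get off the ground in the generic case: for a typical body with centroid at the origin no proper subspace attains equality in \eqref{scc}, so there is no tight $L$ to split along, and the restriction of $\V_K$ to a non-tight $L$ is not (after rescaling) the cone-volume measure of any lower-dimensional body, so the induction hypothesis has nothing to act on.  Your alternative ``push mass together'' argument is not specified enough to evaluate, and making it rigorous while preserving \eqref{scc} is exactly the difficulty you identify; the paper simply sidesteps it with the recursion above.

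Your treatment of the equality case is essentially right in spirit: equality in each step of the chain forces $\V_K(\lin\{u\}\cap S^{n-1})=\tfrac1n\V(K)$ for $\V_K$-a.e.\ $u$, hence $\V_K$ is supported on $n$ one-dimensional subspaces in general position with equal mass, and then the equality case of Theorem~\ref{Henk-Linke-body} (or Lemma~\ref{general-split} iterated) forces $K$ to be a Minkowski sum of $n$ segments.  The paper actually obtains the equality characterization (and more) from its stability version, Theorem~\ref{U(K)stab}, rather than writing out this direct argument.
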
  

In particular, $\U(K)>(1/\mathrm{e})V(K)$. For  polytopes,
Theorem~\ref{Henk-Linke-UK} was shown in \cite{HeL14}, where the
special cases if $K$ is an origin-symmetric polytope, or if $n=2,3$ were verified by
B. He, G. Leng and K. Li \cite{HLL06}, and G. Xiong \cite{Xio10}, respectively.

In order to state another consequence of Theorem~\ref{Henk-Linke-UK}
we need the notation of an {\em isotropic measure}, going  back to
K.M. Ball's reformulation of the Brascamp-Lieb inequality in
\cite{Bal91}. A Borel measure $\mu$ on $S^{n-1}$  is called  {\it
  isotropic} if 
\begin{equation*}
\label{isotropic}
{\rm Id}_{n}=\int_{S^{n-1}}u\otimes u\,d\mu(u), 
\end{equation*}
where ${\rm Id}_{n}$ is the $n\times n$-identity matrix and $u\otimes
u$ the standard tensor product, i.e., $u\otimes
u = u\,u^\intercal$.

Equating traces shows that for an isotropic measure $\mu(S^{n-1})=n$. 
The subspace concentration condition of a Borel measure $\mu$ on
$S^{n-1}$ is equivalent to have an {\it isotropic} normalized linear
image of $\mu$, i.e., that is,  there exists a  $\Phi\in{\rm GL}(n)$ such that
\begin{equation}
\label{lineariso}
{\rm Id}_{n}=\frac{n}{\mu(S^{n-1})}\int_{S^{n-1}}\frac{\Phi u}{\|\Phi u\|}\otimes \frac{\Phi u}{\|\Phi u\|}\,d\mu(u).
\end{equation}
The equivalence in this general  form is due to  K.J. B\"or\"oczky,
E. Lutwak, D. Yang and  G. Zhang \cite{BLYZ14}, while the
discrete case was earlier handled by E. A. Carlen, 
and D. Cordero-Erausquin \cite{CCE09}, and J. Bennett, A. Carbery,
M. Christ and T. Tao \cite{BCCT07} in their study of  the
Brascamp-Lieb inequality. Moreover,   the  case of a measure $\mu$ when strict inequality holds for all subspaces in \eqref{scc} is due to
 B. Klartag \cite{Kla10}.  Isotropic measures on $S^{n-1}$ are discussed also e.g. in 
F. Barthe \cite{Bar98,Bar04}, E. Lutwak, D. Yang and G. Zhang \cite{LYZ04, LYZ07}.
We note that isotropic measures on $\R^n$ play a central role in the
KLS conjecture by R. Kannan, L. Lov\'asz and   M. Simonovits
\cite{KLM95}, see, e.g.,  
F. Barthe and  D. Cordero-Erausquin \cite{BCE13}, O. Guedon and  E. Milman \cite{GuM11} and B. Klartag \cite{Kla09}.

Now from Theorem \ref{Henk-Linke-UK} and by the equivalence \eqref{lineariso} we
immediately conclude 
\begin{coro}  Every  convex body $K\in\Kn$   has an affine image,
  whose cone-volume measure is isotropic. 
\end{coro}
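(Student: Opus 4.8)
The plan is to assemble three ingredients --- Theorem~\ref{Henk-Linke-body}, the equivalence \eqref{lineariso}, and the standard ${\rm GL}(n)$-covariance of the cone-volume measure --- after which the corollary is essentially immediate.

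\emph{Step 1 (centering).} Since the centroid is affinely equivariant and lies in the interior of a convex body, after replacing $K$ by $K-c(K)$ we may assume $o=c(K)\in\inte K$. By Theorem~\ref{Henk-Linke-body} the measure $\V_K$ then satisfies the subspace concentration condition \eqref{scc}, so by the equivalence \eqref{lineariso} (due to \cite{BLYZ14}) there is a $\Phi\in{\rm GL}(n)$ with
$$
\mathrm{Id}_{n}=\frac{n}{\V_K(S^{n-1})}\int_{S^{n-1}}\frac{\Phi u}{\|\Phi u\|}\otimes\frac{\Phi u}{\|\Phi u\|}\,d\V_K(u).
$$

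\emph{Step 2 (recognising the integral).} Put $A=\Phi^{-\intercal}$ and $g\colon u\mapsto \Phi u/\|\Phi u\|$. If $P$ is a polytope with facet $F$ and outer unit normal $u$, then $AF$ is a facet of $AP$ with outer unit normal $A^{-\intercal}u/\|A^{-\intercal}u\|=g(u)$, while $[o,AF]=A[o,F]$ has volume $|\det A|\,\V([o,F])$; hence, by \eqref{eq:cvm_polytope}, $\V_{AP}=|\det A|\,g_{*}\V_P$, where $g_{*}$ denotes the image measure. As the cone-volume measure is weakly continuous in $K$ (for $o$ in the interior) and polytopes are dense, this identity extends to $\V_{AK}=|\det A|\,g_{*}\V_K$ for every $K\in\Kn$; alternatively one checks it directly from \eqref{Gauss-cone} using $h_{AK}=h_K\circ A^\intercal$ and the change-of-variables formula for the surface area measure. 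Feeding $\V_{AK}=|\det A|\,g_{*}\V_K$ into the displayed identity and using $\V_K(S^{n-1})=V(K)$, $V(AK)=|\det A|\,V(K)$, we obtain
$$
\int_{S^{n-1}}v\otimes v\,d\V_{AK}(v)=|\det A|\,\frac{V(K)}{n}\,\mathrm{Id}_{n}=\frac{V(AK)}{n}\,\mathrm{Id}_{n}.
$$

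\emph{Step 3 (normalising the mass).} A homothety $\lambda A$, $\lambda>0$, leaves all outer normals unchanged and scales $\V_{AK}$ by $\lambda^{n}$; choosing $\lambda=(n/V(AK))^{1/n}$ gives $\int_{S^{n-1}}v\otimes v\,d\V_{\lambda AK}(v)=\mathrm{Id}_{n}$, i.e.\ $\V_{\lambda AK}$ is isotropic (its total mass is then $n$, as it must be). Since $\lambda A(K-c(K))$ is an affine image of the original $K$, this proves the claim.

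The only genuinely non-formal ingredient is the ${\rm GL}(n)$-covariance $\V_{AK}=|\det A|\,g_{*}\V_K$ of Step~2, and I expect this routine identity --- most transparent via the polytopal formula \eqref{eq:cvm_polytope} together with weak continuity, or via a change of variables in \eqref{Gauss-cone} --- to be the main (if minor) obstacle; everything else is bookkeeping.
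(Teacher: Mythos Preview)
Your proof is correct and follows the same route the paper sketches: reduce to $c(K)=o$, invoke the subspace concentration condition, apply the equivalence \eqref{lineariso} to obtain $\Phi$, and then identify the resulting ``linear image'' of $\V_K$ as (a scalar multiple of) the cone-volume measure of $\Phi^{-\intercal}K$ via the ${\rm GL}(n)$-covariance $\V_{AK}=|\det A|\,g_{*}\V_K$. The paper compresses all of this into the sentence ``from Theorem~\ref{Henk-Linke-UK} and by the equivalence \eqref{lineariso} we immediately conclude''; your Step~2 makes explicit the covariance identity that the paper leaves implicit, and your citation of Theorem~\ref{Henk-Linke-body} (rather than Theorem~\ref{Henk-Linke-UK}) is in fact the more direct reference, since it is the subspace concentration condition---not the $\U$-functional bound---that feeds into \eqref{lineariso}.
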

This, in particular, answers  a question posed by   
E. Lutwak, D. Yang and  G. Zhang \cite{LYZ05}.

In order to present stronger stability versions of Theorem
\ref{Henk-Linke-body} and Theorem \ref{Henk-Linke-UK} 
we need two notions of distance
between  the "shapes" of two convex bodies. Let  $K,M\in\Kn$,  and let
$K'=K-c(K)$, $M'=M-c(M)$
be their translates whose centroids are the origin.  Then we define 
\begin{eqnarray*}
\label{delta-hom-def}
\delta_{\rm hom}(K,M)&=&\min\{\lambda\geq 0:\exists t>0,
\;M'\subset t\, K'\subset \mathrm{e}^\lambda M'\},\\
\label{delta-vol-def}
\delta_{\rm vol}(K,M)&=&\frac{\V\big(M'\Delta t\,K'\big)}{\V(M)},\; t=\frac{\V(M)^{1/n}}{\V(K)^{1/n}},
\end{eqnarray*} 
where $A\Delta B$ denotes the symmetric difference of two sets, i.e.,
$A\Delta B = A\setminus B\, \cup\, B\setminus A$.

Then both $\delta_{\rm hom}$ and $\delta_{\rm vol}$  are metrics on the
space of convex bodies in $\R^n$ whose volumes are $1$, and centroids are the origin.

% \begin{theo}
% \label{Henk-Linke-stab} There exist constants
% $\varepsilon_0,\gamma_h,\gamma_v>0$ depending on the dimension $n$, 
% such that,   if $K\in\Kn$ with centroid at  the origin, and   
% $$
% \V_K(L\cap S^{n-1})> \frac{d-\varepsilon}{n}\,\V(K)
% $$
% for a non-trivial linear subspace $L$ with $\dim L=d$ and 
% $\varepsilon\in(0,\varepsilon_0)$, then there exist
%  $(n-d)$-dimensional compact convex set $C\subset L^\bot$, and
%  complementary  $d$-dimensional compact convex set $M$  such that
% $$
% \delta_{\rm hom}(K,C+M)\leq \gamma_h\varepsilon^{1/(5n)}\mbox{ \ and \ }
% \delta_{\rm vol}(K,C+M)\leq \gamma_v\varepsilon^{1/5}.
% $$
% \end{theo} 
\begin{theo}
\label{Henk-Linke-stab} 
Let  $K\in\Kn$ with centroid at the origin, and let 
$$
\V_K(L\cap S^{n-1})> \frac{d-\varepsilon}{n}\,\V(K)
$$
for a non-trivial linear subspace
$L$ with $\dim L=d$ and  $\varepsilon\in(0,\varepsilon_0)$. Then there exist
 an $(n-d)$-dimensional compact convex set $C\subset L^\bot$, and
 a complementary $d$-dimensional compact convex set $M$
such that
$$
\delta_{\rm hom}(K,C+M)\leq \gamma_h\varepsilon^{1/(5n)}\mbox{ \ and \ }
\delta_{\rm vol}(K,C+M)\leq \gamma_v\varepsilon^{1/5},
$$
where $\varepsilon_0,\gamma_h,\gamma_v>0$ depend only on $n$.
\end{theo}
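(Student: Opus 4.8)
After a dilation (all quantities involved are scale invariant) we may assume $\V(K)=1$. Write $\R^n=L\oplus L^{\bot}$, let $\pi_L$ be the orthogonal projection onto $L$, put $\bar x=\pi_L x$ and $\bar K=\pi_L(K)$, and for $\bar x\in\bar K$ let $K^{\bar x}=\pi_L^{-1}(\bar x)\cap K$, viewed as a convex subset of $L^{\bot}$, with fibre function $w(\bar x)=\mathcal H_{n-d}(K^{\bar x})$. Then $w^{1/(n-d)}$ is concave on $\bar K$ (Brunn--Minkowski for the slices), $\V(K)=\int_{\bar K}w\,d\mathcal H_d$, and describing $\nu_K^{-1}(L\cap S^{n-1})$ as the union of the fibres over $\partial\bar K$ (up to an $\mathcal H_{n-1}$-null set) and applying Fubini gives
\[
\V_K(L\cap S^{n-1})=\frac 1n\int_{\partial\bar K}\langle \bar x,\nu_{\bar K}(\bar x)\rangle\, w(\bar x)\,d\mathcal H_{d-1}(\bar x).
\]
Together with the divergence theorem this identifies the deficit as
\[
\Delta:=\frac dn\,\V(K)-\V_K(L\cap S^{n-1})=-\frac 1n\int_{\bar K}\langle \bar x,\nabla w(\bar x)\rangle\,d\mathcal H_d(\bar x),
\]
so the hypothesis reads $0\le\Delta<\varepsilon/n$. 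The centroid assumption supplies the two balancing relations $\int_{\bar K}\bar x\, w\,d\mathcal H_d=0$ and $\int_{\bar K}w(\bar x)\,c(K^{\bar x})\,d\mathcal H_d=0$. Theorem~\ref{Henk-Linke-body} is exactly the inequality $\Delta\ge 0$; its proof runs through the $(n-d)$-concavity of $w$ and the first balancing relation, equality forcing $w$ to be constant on $\bar K$ and, by the equality case of Brunn--Minkowski applied to the fibres, all $K^{\bar x}$ to be translates of a fixed body $C$ --- whence $K=C+M$.

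The first and, I expect, main step is the quantitative counterpart: from $\Delta<\varepsilon/n$ one has to deduce an $L^1$ bound
\[
\int_{\bar K}\bigl|\,w^{1/(n-d)}(\bar x)-c_0^{1/(n-d)}\bigr|\,d\mathcal H_d(\bar x)\le \gamma_1\,\varepsilon^{\alpha_1},\qquad c_0:=\mathcal H_d(\bar K)^{-1},
\]
with an explicit $\alpha_1>0$; that is, $K$ is close in measure to the cylinder over $\bar K$ with constant fibre volume. The plan is to retrace the proof of $\Delta\ge 0$ keeping all deficits: the slack in the Jensen/concavity step controls the radial oscillation of $v:=w^{1/(n-d)}$, and the relation $\int_{\bar K}\bar x\,w\,d\mathcal H_d=0$ is used to rule out that $v$ increases outward along some directions and decreases along others with near cancellation --- it is this global use of the centroid that makes the estimate delicate. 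One also checks that $w$ cannot be small on $\partial\bar K$, since the displayed identity then forces $\Delta$ to be close to $\tfrac dn\V(K)$; that regime is thus excluded once $\varepsilon<\varepsilon_0$.

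Given $w$ close to the constant $c_0$, the fibres nearly realise equality in
\[
\mathcal H_{n-d}\!\bigl(\tfrac12(K^{\bar x_0}+K^{\bar x_1})\bigr)^{1/(n-d)}\ \ge\ \tfrac12\bigl(w(\bar x_0)^{1/(n-d)}+w(\bar x_1)^{1/(n-d)}\bigr),
\]
so a stability version of the equality case of Brunn--Minkowski for convex bodies of nearly equal volume produces a single convex body $C\subset L^{\bot}$ and a map $\bar x\mapsto t(\bar x)\in L^{\bot}$, affine up to an error that is a power of $\varepsilon$, with each $K^{\bar x}$ within small Hausdorff distance of $C+t(\bar x)$. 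Straightening $K$ by the shear $(\bar x,\hat x)\mapsto(\bar x,\hat x-t(\bar x))$, replacing $t$ by its affine part $A$, and fixing the remaining translation by means of the two balancing relations, one obtains a centred $(n-d)$-dimensional $C\subset L^{\bot}$ and a centred $d$-dimensional $M$ inside the complementary subspace $\{(\bar x,A\bar x):\bar x\in L\}$ (essentially $\bar K$ lifted by $A$), together with the volume estimate $\V\bigl(K'\,\triangle\, t_0(C+M)'\bigr)\le \kappa\,n\,\varepsilon^{\alpha_2}$ for the appropriate scaling factor $t_0$. This yields $\delta_{\rm vol}(K,C+M)\le\gamma_v\,\varepsilon^{\alpha_2}$ directly; promoting the symmetric-difference bound to the two-sided homothety relation underlying $\delta_{\rm hom}$ requires in addition excluding thin outgrowths of $K$ beyond $C+M$, which for convex bodies costs a further factor comparable to $1/n$ in the exponent. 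Chaining $\alpha_1$ from the first step with the Brunn--Minkowski stability exponent and this last loss, and keeping track of the dimensional constants throughout, gives $\delta_{\rm vol}(K,C+M)\le\gamma_v\varepsilon^{1/5}$ and $\delta_{\rm hom}(K,C+M)\le\gamma_h\varepsilon^{1/(5n)}$ with $\varepsilon_0,\gamma_h,\gamma_v$ depending only on $n$.

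I expect the genuine obstacle to be the second step: since the inequality $\Delta\ge0$ exploits $\int_{\bar K}\bar x\,w\,d\mathcal H_d=0$ globally, one cannot get ``$\Delta$ small $\Rightarrow$ $w$ almost constant'' from a purely local stability analysis of the concavity inequality --- the argument must show that the cancellation in $\int_{\bar K}\langle\bar x,\nabla w\rangle\,d\mathcal H_d$ cannot be robust unless $w$ is genuinely flat, and do so with an explicit rate. The only other heavy input is a stability form of the Brunn--Minkowski equality case, whose moderate exponent --- compounded with the $1/n$-type loss when passing from the volume distance to the homothety distance --- is what prevents the final bounds from being linear in $\varepsilon$.
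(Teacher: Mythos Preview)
Your plan follows essentially the same route as the paper: the deficit identity via the divergence theorem (Proposition~\ref{VKf}), the reduction to showing the fibre function $w=f_{K,L}$ is nearly constant, the Brunn--Minkowski stability for the slices (Proposition~\ref{fBrunnMinkowski-stab}, built on the Figalli--Maggi--Pratelli Minkowski-inequality stability), and the passage from $\delta_{\rm vol}$ to $\delta_{\rm hom}$ at the cost of a $1/n$ in the exponent (Corollary~\ref{deltahomvol}).

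The one place where the paper is more concrete than your outline is exactly the step you flag as the obstacle. Rather than aiming for an $L^1$ bound $\int_{\bar K}|w^{1/(n-d)}-c_0^{1/(n-d)}|\le\gamma_1\varepsilon^{\alpha_1}$, the paper introduces the sublevel set
\[
M_\eta=\{x\in K|L:\ \ln f(x)-\ln f(o)\ge\langle g(o),x\rangle-\eta\}
\]
(with $g(o)$ a subgradient of $\ln f$ at $o$) and proves two short lemmas: a Chebyshev-type estimate $\int_{K|L}\langle\nabla f(x),x\rangle\,d\mathcal H_d\le -\eta\,\V(K\setminus K_\eta)$ (Lemma~\ref{nablafstab}), and a bound $|g(o)|$ small on $M_\eta$ from the concavity of $f^{1/k}$, giving a \emph{pointwise} two-sided bound $e^{-\tau}\le f(x)/f(o)\le e^{\tau}$ on $M_\eta$ with $\tau\sim\eta^{1/2}$ (Lemma~\ref{fxf0eta}). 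Choosing $\eta=\varepsilon^{4/5}$ balances ``$M_\eta$ is almost all of $K|L$'' against ``$f$ is flat on $M_\eta$'' and produces the exponents $1/5$ and $1/(5n)$. This level-set device is what replaces your vaguer ``retrace the proof keeping all deficits''; note also that Proposition~\ref{fBrunnMinkowski-stab} needs a pointwise bound on a convex subset $M_*$, not merely an $L^1$ bound, so the $M_\eta$ formulation feeds directly into it without further work.
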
 
Here $L^\bot$ denotes the orthogonal complement of $L$, and $M$ is
called a complementary compact convex set of $C$, if the linear spaces
generated by $M$ and $C$ are complementary.

Observe that the range of $\varepsilon$, i.e., $\varepsilon_0$, in Theorem \ref{Henk-Linke-stab} has
to depend  on the dimension. 
  For if,  let $K\in\Kn$ be a simplex whose centroid is the origin,
  and let $L$ be generated by $d$ outer normals of the simplex,
  $d\in\{1,\ldots,n-1\}$.  Then we have  $\V_K(L\cap S^{n-1})=\frac{d}{n+1}\,\V(K)$.

Actually, if $L$ is  $1$-dimensional, 
then a more precise version of Theorem~\ref{Henk-Linke-stab} holds. 

\begin{theo}
\label{Henk-Linke-stab1}
Let  $K\in\Kn$ with centroid at the origin, and let 
$$
\V_K(L\cap S^{n-1})> \frac{1-\varepsilon}{n}\,\V(K)
$$
for a  linear subspace
$L$ with $\dim L=1$ and $\varepsilon\in(0,\tilde{\varepsilon}_0)$. Then there exist
 $(n-1)$-dimensional compact convex set $C\subset L^\bot$ with
 $c(C)=o$, 
and $x,y\in \partial K$  such that $y=-\mathrm{e}^sx$ where $|s|<\tilde{\gamma_v}\varepsilon^{\frac1{6}}$,
$[x,y]+C\subset K$, and
$$
K\subset [x,y]+(1+\tilde{\gamma_h}\varepsilon^{\frac1{6n}})C
\mbox{ \ and \ }
\V(K) \leq (1+\tilde{\gamma_v}\varepsilon^{\frac1{6}})\V([x,y]+C),
$$
where $\tilde{\varepsilon}_0,\tilde{\gamma}_h,\tilde{\gamma}_v>0$
depend only on $n$.
\end{theo}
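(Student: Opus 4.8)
The plan is to slice $K$ by hyperplanes orthogonal to $L$ and to reduce the assertion to a one–dimensional stability statement for concave functions together with a stability version of the Brunn–Minkowski inequality. First, normalise so that $L=\R e_n$ and $L^\bot=\R^{n-1}$. For $t\in[-h',h]$, where $h=h_K(e_n)$ and $h'=h_K(-e_n)$, let $K_t=\{z\in\R^{n-1}:(z,t)\in K\}$ be the slice; the face of $K$ with outer normal $e_n$ is $F=K_h\times\{h\}$ and the face with outer normal $-e_n$ is $F'=K_{-h'}\times\{-h'\}$, so $\V_K(\{e_n\})=\frac{h}{n}\mathcal H_{n-1}(F)$ and $\V_K(\{-e_n\})=\frac{h'}{n}\mathcal H_{n-1}(F')$. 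Setting $\phi(t)=\mathcal H_{n-1}(K_t)^{1/(n-1)}$, which is concave on $[-h',h]$ by Brunn–Minkowski, the hypothesis becomes
$$h\,\phi(h)^{n-1}+h'\,\phi(-h')^{n-1}>(1-\varepsilon)\int_{-h'}^{h}\phi(t)^{n-1}\,dt=(1-\varepsilon)\V(K),$$
while $c(K)=o$ splits into $\int_{-h'}^{h}t\,\phi(t)^{n-1}\,dt=0$ and $\int_{-h'}^{h}c(K_t)\,\mathcal H_{n-1}(K_t)\,dt=o$. The subspace concentration condition for the line $L$ (Theorem~\ref{Henk-Linke-body}) is precisely $h\,\phi(h)^{n-1}+h'\,\phi(-h')^{n-1}\le\V(K)$, so the hypothesis says that $K$ is within $\varepsilon$ of equality, and the proof is the quantitative form of that equality discussion.

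The next ingredient is an elementary one–dimensional stability lemma: if $\psi\ge0$ is concave on $[-p,q]$ (with $p,q>0$), $\int_{-p}^{q}t\,\psi^{n-1}=0$ and $q\,\psi(q)^{n-1}+p\,\psi(-p)^{n-1}>(1-\varepsilon)\int_{-p}^{q}\psi^{n-1}$, then for $\varepsilon$ below a dimensional threshold one has $|\log(q/p)|\le c_1\varepsilon^{a_1}$ and $(1-c_2\varepsilon^{a_1})\bar\psi\le\psi(t)\le(1+c_2\varepsilon^{a_1})\bar\psi$ for all $t$, where $\bar\psi=\bigl(\frac{1}{p+q}\int_{-p}^{q}\psi^{n-1}\bigr)^{1/(n-1)}$ and $c_1,c_2,a_1>0$ depend only on $n$. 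To see this I would use the chord bound $\psi(t)\ge\frac{q-t}{q+p}\psi(-p)+\frac{t+p}{q+p}\psi(q)$, which gives $\int\psi^{n-1}\ge(q+p)\int_0^1\bigl((1-\mu)\psi(-p)+\mu\psi(q)\bigr)^{n-1}d\mu$; inserting this into the near–equality forces both endpoint values to be within a power of $\varepsilon$ of $\max\psi$, after which concavity squeezes $\psi$ between $\max\psi$ and its endpoint chord (which is itself $\ge(1-c\varepsilon^{a_1})\max\psi$), so $\psi$ is nearly constant on the whole interval; the centroid identity applied to a near–constant profile then yields $q^2\approx p^2$, i.e.\ $q\approx p$. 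Applying this with $\psi=\phi$, $p=h'$, $q=h$ gives $|\log(h/h')|\le c_1\varepsilon^{a_1}$ and shows that $F$ and $F'$ are genuine facets with $\mathcal H_{n-1}(F),\mathcal H_{n-1}(F')$ within $1\pm c\varepsilon^{a_1}$ of $\bar\phi^{n-1}$.

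Next I would promote this profile rigidity to rigidity of the slices. Writing $\overline{F}=K_h$ and $\overline{F'}=K_{-h'}$ for the projections to $L^\bot$, the slice of $\conv(F\cup F')\subseteq K$ at height $(1-\mu)h-\mu h'$ equals the Minkowski combination $(1-\mu)\overline{F}+\mu\,\overline{F'}$, and the previous step gives $\V(K\setminus\conv(F\cup F'))\le c\varepsilon^{a_1}\V(K)$ together with a uniform near–equality in the Brunn–Minkowski inequality for $\overline{F}$ and $\overline{F'}$. A Brunn–Minkowski stability estimate (valid in all dimensions, with polynomial rate) then shows that $\overline{F}$ and $\overline{F'}$ coincide up to a translation and a homothety of ratio $1+c\varepsilon^{a_2}$, with $a_2>0$ depending only on $n$; hence, with $u(t)$ the matching translation vector, which is affine in $t$ and satisfies $u(h)=o$,
$$u(t)+(1-c\varepsilon^{a_2})\overline{F}\subseteq K_t\subseteq u(t)+(1+c\varepsilon^{a_2})\overline{F}\qquad(t\in[-h',h]).$$
Feeding these inclusions into the horizontal centroid condition pins $c(\overline{F})+c(\overline{F'})$ to be $O(\varepsilon^{a_2}\operatorname{diam}K)$, i.e.\ the segment $\sigma$ from $(c(\overline{F}),h)$ to $(c(\overline{F'}),-h')$ passes within $O(\varepsilon^{a_2}\operatorname{diam}K)$ of $o$.

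Finally I would assemble the prism. Let $\ell$ be the line through $o$ parallel to $\sigma$, and let $x,y$ be the two points of $\ell\cap\partial K$, labelled so that $x$ lies near $F$ and $y$ near $F'$; then automatically $y=-\mathrm{e}^sx$ for some $s$, and combining $|\log(h/h')|\le c\varepsilon^{a_1}$ with the displacement bound yields $|s|\le\tilde\gamma_v\varepsilon^{1/6}$ after tracking $a_1,a_2$. Take $C=\overline{F}-c(\overline{F})$, a compact convex subset of $L^\bot$ with $c(C)=o$, mildly shrunk by a factor $1-c\varepsilon^{a_2}$; the slice inclusions above together with convexity then give $[x,y]+C\subseteq K$, while each $K_t$, being a convex body of $(n-1)$–volume at most $(1+c\varepsilon^{a_1})\mathcal H_{n-1}(C)$ that contains a suitable translate of $C$, lies in $u(t)+(1+\tilde\gamma_h\varepsilon^{1/6n})C$ --- the loss of the factor $1/n$ in the exponent being the usual cost of converting a small volume excess of an $(n-1)$–dimensional body into a homothety excess --- so $K\subseteq[x,y]+(1+\tilde\gamma_h\varepsilon^{1/6n})C$; integrating the two–sided slice inclusions over $[-h',h]$ and adding the contribution of $|h-h'|$ gives $\V(K)\le(1+\tilde\gamma_v\varepsilon^{1/6})\V([x,y]+C)$. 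The main obstacle is the slice–rigidity step: it needs a Brunn–Minkowski stability inequality with an explicit polynomial rate in all dimensions, and one must also check that for small $\varepsilon$ the degenerate configurations --- $F$ or $F'$ lower–dimensional, or $\overline{K}$ much larger than its slices --- are genuinely excluded by the one–dimensional lemma; tracking the successive losses $\varepsilon^{a_1}$ and $\varepsilon^{a_2}$ to land exactly at $\varepsilon^{1/6}$ and $\varepsilon^{1/6n}$ is then bookkeeping.
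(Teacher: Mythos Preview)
Your overall strategy---slice orthogonally to $L$, reduce to a one-dimensional analysis of the section volume $f(t)=\mathcal H_{n-1}(K_t)$, show $f$ is nearly constant on a nearly symmetric interval, invoke Brunn--Minkowski stability to make the extreme slices near-translates, then assemble the prism---is exactly the paper's. The Brunn--Minkowski stability step and the final assembly are handled there via Lemma~\ref{BrunnMinkowski-stab} (built on Figalli--Maggi--Pratelli) together with Lemmas~\ref{centroidhom} and~\ref{nocentroid}, matching your outline; your remark that the $\varepsilon^{1/(6n)}$ exponent is the price of converting a volume excess into a homothety excess is precisely how Lemma~\ref{centroidhom} enters.

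The gap is in your proof sketch of the one-dimensional lemma. You assert that the chord lower bound
\[
\int\psi^{n-1}\ge(p+q)\int_0^1\bigl((1-\mu)\psi(-p)+\mu\psi(q)\bigr)^{n-1}d\mu
\]
combined with the near-equality hypothesis forces the endpoint values within a power of $\varepsilon$ of $\max\psi$. But the chord bound involves only $\psi(-p),\psi(q),p,q$, not $\max\psi$; feeding it into $q\psi(q)^{n-1}+p\psi(-p)^{n-1}>(1-\varepsilon)\int\psi^{n-1}$ produces only an inequality among $\psi(-p),\psi(q),p,q$, and already for $n=2$ the difference between the endpoint sum and the chord integral equals $\tfrac12(p-q)(\psi(q)-\psi(-p))$, which has no definite sign. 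So this step, as written, does not go through, and you invoke the centroid identity only \emph{after} it, whereas it is needed \emph{for} it.

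The paper reaches near-constancy by a different mechanism. Working with $f=\psi^{n-1}$, integration by parts rewrites the hypothesis as $\int tf'(t)\,dt>-\varepsilon\V(K)$, and the subgradient inequality for the log-concave $f$ together with the centroid identity $\int tf(t)\,dt=0$ yields Lemma~\ref{nablafstab}: the set where $\ln f(t)<\ln f(0)+g(o)t-\eta$ has $f$-mass at most $\varepsilon\V(K)/\eta$. Choosing $\eta=\varepsilon^{2/3}$ and applying Lemma~\ref{fxf0eta} (which also uses the a~priori bound $b\le na$ from $-K\subset nK$) gives $e^{-c\varepsilon^{1/3}}\le f(t)/f(o)\le e^{c\varepsilon^{1/3}}$ on all but an $\varepsilon^{1/3}$-fraction of $[-a,b]$; then the hypothesis itself, read as $af(-a)+bf(b)\approx(a+b)f(o)$, pushes the lower bound to the endpoints. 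Your lemma is true, but its proof needs this log-concave subgradient argument (or some equivalent use of the centroid condition \emph{before} concluding that $\psi$ is nearly flat); the bare chord comparison will not carry it.
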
 
We use this theorem in order to deduce the following stability version of Theorem~\ref{Henk-Linke-UK}.

\begin{theo}
\label{U(K)stab}
Let $K\in \Kn$ with centroid at  the origin, and let 
$$
\U(K)\leq (1+\varepsilon) \frac{(n!)^{1/n}}{n}\,\V(K)
$$
for $\varepsilon\in(0,\varepsilon_*)$. Then there exists  
a $K$ containing parallepiped $P$, such that for any facet $F$ of $P$, we have
$$
{\mathcal H}_{n-1}(F\cap K)\geq (1-\gamma_*\varepsilon^{\frac1{6}}){\mathcal H}_{n-1}(F),
$$
where $\varepsilon_*,\gamma_*>0$ depend only on $n$. In particular, we
have 
$$
(1-\gamma \varepsilon^{\frac1{6n}})P\subset K
\mbox{ \ and \ }
\V(P\backslash K)\leq \gamma \varepsilon^{\frac1{6}}\V(K).
$$
\end{theo}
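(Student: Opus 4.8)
The plan is to reduce the near-equality in the $U$-functional inequality to a near-equality in the subspace concentration condition for $\V_K$, and then to iterate the one-dimensional stability result, Theorem~\ref{Henk-Linke-stab1}, to extract a parallelepiped.

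First I would recall the mechanism behind the sharp bound $\U(K)\geq \frac{(n!)^{1/n}}{n}\V(K)$ (Theorem~\ref{Henk-Linke-UK}). The quantity $\U(K)^n$ is the $\V_K^{\otimes n}$-measure of the set of $n$-tuples of linearly independent directions; bounding the ``dependent'' part involves, for each coordinate hyperplane-type degeneration, a term controlled by $\V_K(L\cap S^{n-1})$ for $1$-dimensional $L$. Tracking the proof of Theorem~\ref{Henk-Linke-UK}, the inequality $\U(K)\geq \frac{(n!)^{1/n}}{n}\V(K)$ is obtained as a chain of estimates, each of which is an instance of the subspace concentration bound $\V_K(L\cap S^{n-1})\leq \frac{\dim L}{n}\V(K)$ applied to a suitable flag of subspaces, together with a combinatorial counting of permutations. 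Equality forces every link in the chain to be tight. Hence, if $\U(K)\leq (1+\varepsilon)\frac{(n!)^{1/n}}{n}\V(K)$, a quantitative version of that chain yields subspaces $L_1\subset L_2\subset\cdots\subset L_{n-1}$, $\dim L_i=i$, with
$$
\V_K(L_i\cap S^{n-1})\geq \frac{i-c_1\varepsilon}{n}\,\V(K)
$$
for a constant $c_1=c_1(n)$; in particular each one-dimensional $L\subset L_1$ satisfies the hypothesis of Theorem~\ref{Henk-Linke-stab1} with $\varepsilon$ replaced by $c_1\varepsilon$. This ``stability of the proof of Theorem~\ref{Henk-Linke-UK}'' step is, I expect, the main obstacle: one must show that each inequality used there degrades gracefully, which requires re-examining the Brascamp--Lieb / rearrangement argument (or the polytopal computation of \cite{HeL14}) with explicit error terms rather than merely using it as a black box.

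Granting the near-tight flag, I would run an induction on $n$ using Theorem~\ref{Henk-Linke-stab1}. For $n=1$ the statement is trivial. For the inductive step, apply Theorem~\ref{Henk-Linke-stab1} to $L=L_1$ (with parameter $c_1\varepsilon<\tilde\varepsilon_0$, which forces $\varepsilon_*$ small depending on $n$): this produces a segment $[x,y]$ with $y=-\mathrm{e}^s x$, $|s|<\tilde\gamma_v(c_1\varepsilon)^{1/6}$, and an $(n-1)$-dimensional $C\subset L_1^\bot$ with $c(C)=o$ such that $[x,y]+C\subset K\subset [x,y]+(1+\tilde\gamma_h(c_1\varepsilon)^{1/(6n)})C$ and $\V(K)\leq(1+\tilde\gamma_v(c_1\varepsilon)^{1/6})\V([x,y]+C)$. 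Thus $K$ is, up to the stated errors, a cylinder over $C$ with a segment as the generating edge. The volume pinching also transfers near-equality in the $U$-functional from $K$ to $C$: since $\U$ is multiplicative-type under the near-direct-sum decomposition $K\approx [x,y]+C$ (one estimates $\U(K)$ from above by $\U([x,y])\cdot\U(C)^{?}$ — more precisely one uses that $\V_K$ restricted to $L_1^\bot$ is close to $\V_C$ after the obvious identification, with total-mass defect $O(\varepsilon^{1/6})$), one gets $\U(C)\leq (1+c_2\varepsilon^{1/6})\frac{((n-1)!)^{1/(n-1)}}{n-1}\V(C)$. By the inductive hypothesis, $C$ contains a parallelepiped $P'\subset L_1^\bot$ with ${\mathcal H}_{n-2}(F'\cap C)\geq (1-c_3\varepsilon^{1/6})\,{\mathcal H}_{n-2}(F')$ for every facet $F'$ of $P'$. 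Setting $P=[x,y]+P'$ — a parallelepiped, since $[x,y]$ is a segment complementary to $\lin P'$ — and using $[x,y]+C\subset K$ together with the sandwich above, a short computation shows $P\subset K$ up to controlled error and that each facet $F$ of $P$ (either a translate of $[x,y]+F'$ or a translate of $P'$) meets $K$ in at least a $(1-\gamma_*\varepsilon^{1/6})$-fraction of its $(n-1)$-measure; the worst exponent comes from the $1/6$ in Theorem~\ref{Henk-Linke-stab1}, and absorbing the constants $c_1,\tilde\gamma_*,\dots$ into a single $\gamma_*(n)$ gives the first claim. The ``in particular'' statements then follow by the elementary fact that if a convex body $K$ contains a parallelepiped $P$ and meets each facet of $P$ in nearly full measure, then $(1-\gamma\varepsilon^{1/(6n)})P\subset K$ and $\V(P\setminus K)\leq \gamma\varepsilon^{1/6}\V(K)$ — indeed $P\setminus K$ is covered by thin slabs near the facets whose total volume is controlled by the facet-measure defects, and the containment of a shrunken copy follows by convexity from the fact that $K$ contains points near the center of every facet.

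Besides the quantitative flag extraction, the two technical points needing care are: (i) justifying that near-equality of $\U(K)$ localizes the cone-volume measure along a full flag simultaneously (not just for one subspace), which may require choosing the flag greedily and checking compatibility of the tightness conditions; and (ii) the transfer of the $U$-functional estimate from $K$ to the section/summand $C$, where one must control how much $\V_K$ can ``leak'' away from $L_1\cup L_1^\bot$ — this leak is exactly $O(\varepsilon^{1/6})$ by the conclusion of Theorem~\ref{Henk-Linke-stab1}, so the bound propagates, but the bookkeeping of constants through the induction must be done with some care to keep a single dimension-dependent $\gamma_*$.
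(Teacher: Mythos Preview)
Your inductive scheme has a genuine gap: the exponent degrades through the induction. After one application of Theorem~\ref{Henk-Linke-stab1} you only know $K\approx [x,y]+C$ up to an error of order $\varepsilon^{1/6}$, so when you transfer the $\U$-functional estimate to $C$ you get $\U(C)\leq(1+c_2\varepsilon^{1/6})\frac{((n-1)!)^{1/(n-1)}}{n-1}\V(C)$, i.e.\ the ``$\varepsilon$'' for the inductive call is $c_2\varepsilon^{1/6}$, not $c\varepsilon$. Repeating this $n-1$ times yields a final error of order $\varepsilon^{1/6^{n-1}}$, not $\varepsilon^{1/6}$. This is not a bookkeeping-of-constants issue; the \emph{exponent} blows up, so your argument proves a much weaker theorem. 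Point~(ii) in your own list is precisely where this happens, and the vague multiplicativity claim for $\U$ under near-direct-sums cannot recover the lost exponent.

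The paper avoids the induction entirely. From the single integral inequality
\[
\int_{S^{n-1}}\bigl(\V(K)-\V_K(S^{n-1}\cap\lin\{u\})\bigr)\,d\V_K(u)\leq\Bigl(\tfrac{n-1}{n}+2n\varepsilon\Bigr)\V(K)^2
\]
(obtained from the $m=1$ step of the $\sigma_m$ recursion), a greedy choice of $v_1,\ldots,v_n$---$v_i$ maximizing $\V_K(S^{n-1}\cap\lin\{u\})$ over $u\notin\lin\{v_1,\ldots,v_{i-1}\}$---yields $n$ linearly independent one-dimensional subspaces that are \emph{all} near-tight with defect $O(\varepsilon)$, not $O(\varepsilon^{1/6})$. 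Theorem~\ref{Henk-Linke-stab1} is then applied to each $\lin\{v_i\}$ \emph{in parallel}, producing $n$ cylinder approximations $[x_i,y_i]+C_i\subset K$ all with the same $\varepsilon^{1/6}$ error. The parallelepiped $P$ is built directly from the endpoints $x_i,y_i$, and a direct geometric argument (comparing widths $w_i(v_j)$, using the cones $[o,x_i+C_i]$, $[o,y_i+C_i]$, and a contradiction argument for $\frac{1}{4n}P\subset K$) gives the facet bounds. No step feeds an $\varepsilon^{1/6}$ back into Theorem~\ref{Henk-Linke-stab1}, so the exponent stays at $1/6$.
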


The paper is organized as follows. In the next section we collect some
basic facts and notations from convexity which will be used later on. 
The third section is devoted to the proof of Theorem \ref{Henk-Linke-body}.
In Section \ref{another_property} we show another characetristic
property of  cone-volume measures of
convex bodies with centroid at the origin.  
The proofs of Theorem \ref{Henk-Linke-stab}, \ref{Henk-Linke-stab1}
are given in Section 8 and are prepared in  Sections 5--7.
% As preparation for the
% proofs % of the stability results of the cone-volume measure, i.e.,
% of Theorem \ref{Henk-Linke-stab}, \ref{Henk-Linke-stab1} in Section
% \ref{proof23}, we prove in Section \ref{prop_vol_distance} properties
% of the symmetric volume distance. Moreover, we state consequences of the
% stability of the Brunn-Minkowski inequality in Section \ref{sec_BM}, and
% present more properties of a certain functional which of central
% interest in our investigations in Section \ref{prop_f}.
Finally, in Section
\ref{secU(K)} we prove Theorem \ref{Henk-Linke-UK}.

\smallskip
{\it Acknowledgements.} We are grateful to Rolf Schneider for various
ideas shaping this paper. We also acknowledge fruitful discussions with
Daniel Hug and David Preiss about the Gau\ss-Green theorem.

\section{Preliminaries\label{pre}}
Good general references for the theory of convex bodies are provided
by the books of Gardner\cite{Gar95}, Gruber\cite{Gruberbook}, Schneider\cite{Sch93} and
Thompson\cite{Tho96}.

The support function $h_K : \R^n\to \R$   of convex body $K\in\Kn$ is
defined, for $x\in\R^n$, by
\begin{equation*}
  h_K(x)=\max\{\langle x,y\rangle : y\in K\}.
\end{equation*} 

A boundary point $x\in\partial K$  is said to have  a unit outer normal
(vector) $u\in S^{n-1}$ provided $\langle x,u\rangle= h_K
(u)$. $x\in\partial K$ is called singular if it has more than one unit
outer normal, and $\partial_* K$ is the set of all non-singular
boundary points. It is well known  that the set of singular boundary points of a convex body has $\mathcal{H}_{n-1}$-measure equal to $0$.
For each Borel set $\omega\subset S^{n-1}$, the inverse spherical
image of $\omega$  is the set of all
points of $\partial K$  which have an outer unit normal belonging to $\omega$. Since the
inverse spherical image of $\omega$  differs from
$\nu_K^{-1}(\omega)\subseteq\partial_* K$  by a set of $\mathcal{H}_{n-1}$-measure equal to $0$, we will often make no distinction between the two sets.

For $K\in \Kn$ the Borel measure $S_K$ on $S^{n-1}$ given by 
\begin{equation*}
 S_K(\omega)=\mathcal{H}_{n-1}(\nu_K^{-1}(\omega))
\end{equation*}
is called the (Aleksandrov-Fenchel-Jessen) surface area
measure. Observe that
\begin{equation*}
  \V(K)=\V_K(S^{n-1})= \int_{S^{n-1}}\frac{h_K(u)}{n}d\,S_K(u).
\end{equation*} 

As usual, for two subsets  $C, D\subseteq \R^n$ and reals
$\nu,\mu \geq 0$ the  Minkowski combination is defined by 
\begin{equation*}
  \nu\,C+\mu\,D = \{\nu\,c+\mu\,d : c\in C,\,d\in D\}.
\end{equation*}  
By the celebrated Brunn-Minkowski inequality we know that the $n$-th root 
of the volume of the Minkowski combination  is a concave function. 
 More precisely, for two convex compact sets 
$K_0,K_1\subset\R^n$ and for $\lambda\in[0,1]$  we have 
\begin{equation}
   \V((1- \lambda)\,K_0+\lambda\, K_1)^{1/n}\geq
   (1-\lambda)\,\V(K_0)^{1/n}+ \lambda\,\V(K_1)^{1/n}
\label{eq:brunn_minkowski} 
\end{equation}
with equality for some  $0<\lambda<1$ if and only if $K_0$ and $K_1$ lie
in parallel hyperplanes  or are homothetic, i.e., there exist 
$t\in\R^n$ and $\mu\geq 0$ such that $K_1=t+\mu\,K_0$  (see also
\cite{Gardnersurvey}).
 
Let $f:C\to \R_{> 0}$ be a positive function on an open  convex subset
$C\subset\R^n$ with the property that
there exists a $k\in\N$ such that $f^{1/k}$ is concave. Then by the
(weighted) arithmetic-geometric mean inequality 
\begin{equation*}
\begin{split}
f((1-\lambda)\, x+\lambda\, y) & = 
 \left(f^{1/k}((1-\lambda)\, x+\lambda\, y)\right)^k \\ &\geq  \left((1-\lambda)
 f^{1/k}(x)+\lambda f^{1/k}(y)\right)^k\\ &\geq f^{1-\lambda}(x)\cdot f^{\lambda}(y).
\end{split}
\end{equation*}
This means that $f$ belongs to the class of log-concave functions
which by the positivity of $f$ is equivalent to 
\begin{equation*}
  \ln f((1-\lambda)\, x+\lambda\, y)\geq (1-\lambda)\ln
  f(x)+\lambda\ln f(y) 
\end{equation*}   
for $\lambda\in [0,1]$. Hence,  for all $x,y\in C$ there exists a
subgradient $g(y)\in\R^n$ such that (cf., e.g., \cite[Sect. 23]{Rockafellar:1997ww})
\begin{equation}
\ln f(x)-\ln f(y)\leq\langle g(y), x-y\rangle.
\label{eq:gradient_concave}
\end{equation}
If $f$ is differentiable at $y$,  the subgradient  is the gradient
of $\ln f$ at $y$, i.e., $g(y)=\nabla \ln f = \frac{1}{f(y)}\nabla f(y)$.

 For a 
subspace $L\subseteq\R^n$, let  $L^\perp$ be its orthogonal
complement,  
and for $X\subseteq\R^n$ we denote by $X|L$ its orthogonal
projection onto $L$, i.e., the image of $X$ under the linear map
forgetting the part of $X$  belonging to $L^\perp$. 

Here, for a convex body $K\in\Kn$ and a $d$-dimensional subspace $L$,
$1\leq d\leq n-1$,  we are interested in the function measuring the volume of
$K$ intersected with planes parallel to $L^\perp$, i.e., in the function 
\begin{equation}
f_{K,L}: L \to \R_{\geq 0}\text{ with } x\mapsto \mathcal{H}_k(K\cap
(x+L^\perp)), 
\label{eq:function}
\end{equation} 
where $k=n-d$ is the dimension of $L^\perp$.  By
the Brunn-Minkowski inequality and the remark above, $f_{K,L}$ is a log-concave
on function on $K|L$ which is positive at least in   the relative interior of
$K|L$ % , denoted by $\relint(K|L)$ 
(cf.~\cite{Ball:1988vo}). $f_{K,L}$ is also called the $k$-dimensional  X-ray of $K$
parallel to $L^\perp$ (cf.~\cite{Gar95}).  
By well-known properties of concave functions  we also know
\begin{prop}\hfill 
\begin{enumerate}
\item[{\rm i)}] $\ln f_{K,L}$ -- and thus $f_{K,L}$ -- is continuous on 
  $\inte(K)|L$. Moreover, $\ln f_{K,L}$ -- and thus $f_{K,L}$ -- are Lipschitzian on
  any compact subset of  $(\inte K)|L$. 
% \item[{\rm ii)}] There exists a  {\em subgradient} $\gr\ln f_L: \inte(K|L)\to L$ such that for all $\vx, \vy\in\inte(K|L)$ 
% \begin{equation*}
% \ln f_L(\vy)-\ln f_L(\vx)\leq\ip{(\gr\ln f_L)(\vx)}{\vy-\vx}.
% \label{eq:gradient_concave}
% \end{equation*}
% \item[{\rm ii)}] For any   compact subset $C\subset\inte(K|L)$ there
%   exists a constant $(C)$ such that for all $\vx\in C$  
% \begin{equation*}
%          |(\gr\ln f_L)(x)|\leq c.
% \end{equation*}
\item[{\rm ii)}] $\ln f_{K,L}$ -- and thus $f_{K,L}$ -- is on $\inte(K)|L$
  almost everywhere differentiable, i.e.,
  there exists a dense subset $D\subseteq \inte(K)|L$, where $\nabla
  f_{K,L}$ exists. %Moreover, $\nabla f_{K,L}$ is continuous on $D$.
\end{enumerate}
\label{rem:basicfacts_concave}
\end{prop}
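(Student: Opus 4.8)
The proposition concerns only the general behaviour of a finite concave function, applied to $g:=\ln f_{K,L}$, so the plan is to reduce everything to two classical facts and then transfer them from $g$ to $f_{K,L}=\mathrm{e}^{g}$. First I would record that $\inte(K)|L=\relint(K|L)$ (relative interior commutes with linear maps, and $K|L$ is full-dimensional in $L$); this set is an open, convex, bounded subset of $L\cong\R^d$ on which $f_{K,L}$ is strictly positive, so $g$ is a well-defined \emph{finite} concave function there, the log-concavity of $f_{K,L}$ having already been obtained from the Brunn--Minkowski inequality. Granting that $g$ is locally Lipschitz (hence continuous, and Lipschitz on compacta) and differentiable almost everywhere, the statements for $f_{K,L}$ follow at once: $\mathrm{e}^{(\cdot)}$ is locally Lipschitz and differentiable, and $\nabla f_{K,L}=f_{K,L}\,\nabla g$ wherever $\nabla g$ exists.

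For part i) I would argue along the usual two steps. \emph{Local boundedness}: around any $x_0\in\inte(K)|L$ choose a $d$-simplex $\Delta=\conv\{v_0,\dots,v_d\}\subseteq\inte(K)|L$ having $x_0$ in its interior; concavity gives $g\ge\min_i g(v_i)$ on $\Delta$, and reflecting a ball $B(x_0,r)\subseteq\Delta$ through $x_0$ together with $g(x_0)\ge\tfrac12 g(x)+\tfrac12 g(2x_0-x)$ yields an upper bound on $B(x_0,r)$. \emph{Local Lipschitz}: if $|g|\le M$ on $B(x_0,2r)$, then for $x,y\in B(x_0,r)$ write $y=(1-t)x+tz$ with $z=y+r\,\tfrac{y-x}{\|y-x\|}\in B(x_0,2r)$ and $t=\tfrac{\|y-x\|}{\|y-x\|+r}$, so that concavity gives $|g(x)-g(y)|\le\tfrac{2M}{r}\|x-y\|$; continuity is immediate. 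For a Lipschitz bound on an arbitrary compact $E\subseteq\inte(K)|L$ I would not chain segments but instead pass to the compact \emph{convex} set $E':=\{x\in L:\mathrm{dist}(x,\conv E)\le\delta\}$, which for small $\delta>0$ still lies in $\inte(K)|L$; $g$ is bounded on $E'$ by continuity and compactness, and the same segment-extension argument carried out \emph{inside} the convex set $E'$ produces a single Lipschitz constant valid on all of $E$.

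For part ii), since $g$ is locally Lipschitz on the open set $\inte(K)|L$, Rademacher's theorem shows $g$ is differentiable at Lebesgue-almost every point, so the set $D$ of differentiability is dense; equivalently one may quote the classical theorem that a finite concave function on an open convex set is differentiable outside a null set (e.g.\ because its subdifferential fails to be a singleton only on a null set, cf.\ \cite{Rockafellar:1997ww, Sch93}). At each $x\in D$, $f_{K,L}=\mathrm{e}^{g}$ is differentiable with $\nabla f_{K,L}(x)=f_{K,L}(x)\,\nabla g(x)$. I do not anticipate a genuine obstacle: every ingredient is a standard fact about concave functions, and the only points needing slight care are the finiteness of $g$ on $\inte(K)|L$ (which rests on $\inte(K)|L=\relint(K|L)$ and the positivity of $f_{K,L}$ there) and the extraction of one Lipschitz constant over a possibly non-convex compact set, handled by the convex-thickening device above.
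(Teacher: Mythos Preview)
Your proposal is correct and follows essentially the same approach as the paper: both reduce the proposition to standard facts about finite concave functions on open convex sets. The only difference is one of presentation --- the paper simply cites Schneider \cite[Theorem~1.5.3]{Sch93} for~(i) and Rockafellar \cite[Theorem~25.5]{Rockafellar:1997ww} for~(ii), whereas you reproduce the classical arguments (simplex bound for local boundedness, segment-extension for the local Lipschitz estimate, Rademacher for a.e.\ differentiability) and add the small observation $\inte(K)|L=\relint(K|L)$ to justify that $g=\ln f_{K,L}$ is finite and concave on an open convex domain. Your convex-thickening device for extracting a single Lipschitz constant over a non-convex compact set is a nice explicit touch, but it is not something the paper needs to address since the cited theorems already deliver local Lipschitz continuity, which is all that is used later.
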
 
\begin{proof} For i) see, e.g., \cite[Theorem
  1.5.3]{Sch93}, and  for ii)    see, e.g., \cite[Theorem 
  25.5]{Rockafellar:1997ww}.
\end{proof}

Now for $K\in\Kn$ with centroid at $0$, i.e., $c(K)=0$, we have by Fubini's theorem with respect to the decomposition $L\oplus L^{\perp}$   
\begin{equation*}
\begin{split}
0 & =\int_K x\, \mathrm{d}\ha_n(x) \\ &= \int_{K|L}\left(
  \int_{(\hat{x}+L^\perp)\cap K} \tilde{x}\,
  \mathrm{d}\ha_k(\tilde{x})\right) \mathrm{d}\ha_d(\hat{x})\\ & = \int_{K|L}
   f_{K,L}(\hat{x})\, c((\hat{x}+L^\perp)\cap K)\,\mathrm{d}\ha_d(\hat{x}).
\end{split}
\end{equation*}
Writing $c((\hat{x}+L^\perp)\cap K)=\hat{x}+\tilde{y}$ with
$\tilde{y}\in L^\perp$ gives 
\begin{equation}
 \int_{K|L}  f_{K,L}(\hat{x})\,\hat{x}\,\mathrm{d}\ha_d(\hat{x}) =0.
\label{eq:centroid}
\end{equation}

\section{Proof of Theorem~\ref{Henk-Linke-body}\label{proof1}}
For the proof of Theorem \ref{Henk-Linke-body} we first establish some
more properties of the function $f_{K,L}$, where we always assume that
$L\subset\R^n$ is a $d$-dimensional linear subspace, $1\leq d\leq
n-1$,  with
$k$-dimensional orthogonal complement $L^\perp$.
We recall that
a function $f$ is said to be upper semicontinuous on $K|L$ if whenever
$x,y_m\in K|L$ for $m\in\N$  and $y_m$ tends to $x$, then
\begin{equation*}
          f(x)\geq \limsup_{m\to\infty} f(y_m).
\end{equation*}

% We set up the notions for this section. Let $K$ be a convex body in $\R^n$, 
% let $d,k\in\{1,\ldots,n-1\}$ with $d+k=n$,
% and let $L$ be a $d$-dimensional linear subspace. 
% For $x\in K|L$, we define
% $$
% f_{K,L}(x)={\mathcal H}_k(K\cap(x+L^\bot)).
% $$
% In particular, the Brunn-Minkowski theorem yields that $f_{K,L}^{1/k}$ is concave on $K|L$. It follows that $\ln f_{K,L}$ is concave 
% on $({\rm int}K)|L$, 
% and hence $f_{K,L}$ is so-called log-concave, because $f=f_{K,L}$ satisfies
% \begin{align*}
% \ln f(\lambda x+(1-\lambda)y)&=k\ln f(\lambda x+(1-\lambda)y)^{\frac1k}\geq
% k\ln \left[\lambda f(x)^{\frac1k}+(1-\lambda)f(y)^{\frac1k}\right]\\
% &\geq k\ln  f(x)^{\frac{\lambda}k}f(y)^{\frac{1-\lambda}k}
% =\lambda \ln f(x)+(1-\lambda)\ln f(y).
% \end{align*}

% We establish the some basic properties Lemma~\ref{upper-semicont}, Corollary~\ref{ray-limit}
% and  Lemma~\ref{nablafL1} about $f_{K,L}$, which is continuous on $({\rm int}\,K)|L$ being log-concave. We recall that a function $f$ is upper semicontinuous on $K|L$ if whenever
% $x,y_m\in K|L$ for $m\in\N$  and $y_m$ tends to $x$, then
% $f(x)\geq \limsup_{m\to\infty} f(y_m)$.

\begin{lemma}
\label{upper-semicont}
The function $f_{K,L}$ is upper semicontinuous on $K|L$.
\end{lemma}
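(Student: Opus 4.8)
The plan is to prove upper semicontinuity of $f_{K,L}$ on all of $K|L$, noting that the interesting case is the relative boundary of $K|L$, since on the interior $f_{K,L}$ is already continuous by Proposition~\ref{rem:basicfacts_concave}~i). So fix $x\in \bd(K|L)$ and a sequence $y_m\in K|L$ with $y_m\to x$; I want to show $\limsup_{m\to\infty} f_{K,L}(y_m)\le f_{K,L}(x)$. Passing to a subsequence, I may assume $f_{K,L}(y_m)$ converges to $\ell:=\limsup_m f_{K,L}(y_m)$, and I must show $\ell\le \ha_k(K\cap(x+L^\perp))$.

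The key geometric idea is a compactness/limit argument on the fibers. Each fiber $K\cap(y_m+L^\perp)$ is a compact convex subset of the $k$-dimensional affine plane $y_m+L^\perp$; translating it by $-y_m$ into $L^\perp$ gives a sequence of convex bodies $C_m\subset L^\perp$ with $\ha_k(C_m)=f_{K,L}(y_m)$. These sets are uniformly bounded (they all lie in the bounded set $(K-x)|L^\perp$ up to a vanishing translation, or simply in a fixed large ball since $K$ is compact). By the Blaschke selection theorem I can pass to a further subsequence so that $C_m\to C$ in the Hausdorff metric for some compact convex $C\subseteq L^\perp$ (possibly lower-dimensional). Volume ($\ha_k$) is continuous with respect to Hausdorff convergence on convex bodies — more precisely, upper semicontinuous in general and continuous when the limit is full-dimensional — so $\ha_k(C)=\lim_m \ha_k(C_m)=\ell$. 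It remains to check that $x+C\subseteq K$, for then $C\subseteq (K-x)\cap L^\perp = (K\cap(x+L^\perp))-x$, whence $\ell=\ha_k(C)\le \ha_k(K\cap(x+L^\perp))=f_{K,L}(x)$, which is exactly the claim.

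To see $x+C\subseteq K$: take any $z\in C$. By Hausdorff convergence there are $z_m\in C_m$ with $z_m\to z$, and $y_m+z_m\in K$ by construction; since $y_m+z_m\to x+z$ and $K$ is closed, $x+z\in K$. Hence $x+C\subseteq K$ as desired.

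The one point needing a little care — and the main (mild) obstacle — is the behaviour of $\ha_k$ under Hausdorff limits: volume is genuinely only upper semicontinuous along such limits (a sequence of fat bodies can converge to a degenerate flat one), but upper semicontinuity is all I need here, since I want $\ell=\lim_m \ha_k(C_m)\le \ha_k(C)$ is false in general — rather I should argue the other direction. In fact what I use is: for convex bodies, $C_m\to C$ implies $\limsup_m \ha_k(C_m)\le \ha_k(C)$ only when $C$ is not forced to be degenerate; the correct and safe statement is that $\ha_k$ is upper semicontinuous under Hausdorff convergence of compact convex sets, i.e. $\limsup_m\ha_k(C_m)\le\ha_k(C)$, which is precisely what delivers $\ell\le\ha_k(C)\le f_{K,L}(x)$. (This monotone/semicontinuity property of volume on the space of convex bodies is standard, e.g. from the continuity of mixed volumes or directly from the fact that $C_m\subseteq C+\epsilon B$ eventually.) With that in hand the proof is complete; no serious difficulty remains beyond correctly invoking Blaschke selection and this semicontinuity of $\ha_k$.
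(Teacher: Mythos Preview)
Your proof is correct and follows the same route as the paper: translate the fibers into $L^\perp$, apply Blaschke selection to get $C_m\to C$, observe $x+C\subset K$ by closedness of $K$, and compare $k$-volumes. The only difference is that the paper simply invokes the (standard) \emph{continuity} of $\mathcal{H}_k$ on compact convex subsets of $L^\perp$ under Hausdorff convergence to obtain $\mathcal{H}_k(C)=\lim_m f_{K,L}(y_m)$ directly, whereas your final paragraph needlessly agonises over semicontinuity; your own observation $C_m\subset C+\varepsilon B$ already yields the inequality $\limsup_m\mathcal{H}_k(C_m)\le\mathcal{H}_k(C)$ you need, but in fact full continuity holds (even when $C$ is lower-dimensional, by the Steiner formula), so there is no obstacle here at all.
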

%\proof
\begin{proof} Let $x,y_m\in K|L$ for $m\in\N$  be such that
  $\lim_{m\to\infty}y_m=x$. According to the Blaschke selection
  principle (cf., e.g., \cite{Sch93}), we may assume that the sequence of compact convex sets
$$
C_m=[(y_m+L^\bot)\cap K]-y_m\subset L^\bot
$$
tends to a compact convex set $C\subset L^\bot$ in the Hausdorff
topology. Since the $k$-volume of a compact convex set in $L^\bot$ is
a continuous functional, we have ${\mathcal
  H}_k(C)=\lim_{m\to\infty}f_{K,L}(y_m)$. However, $x+C\subset K$, and
therefore 
$f_{K,L}(x)\geq {\mathcal H}_k(C)$.
\end{proof}
%\proofbox

An immediate consequence is that for sequences from the relative interior of
$K|L$, 
$f_{K,L}$ behaves ``continiuously'', i.e.,  
\begin{coro}
\label{ray-limit} Let  $o\in\inte K$ and $x\in K|L$.  Then $\lim_{m\to\infty}f_{K,L}(e^{\frac{-1}m}x)=f_{K,L}(x)$.
\end{coro}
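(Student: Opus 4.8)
The plan is to establish the two one-sided inequalities
$\limsup_{m\to\infty} f_{K,L}(e^{-1/m}x)\le f_{K,L}(x)$ and
$\liminf_{m\to\infty} f_{K,L}(e^{-1/m}x)\ge f_{K,L}(x)$ separately. First I would note that since $o\in\inte K$ we also have $o=o|L\in\inte(K|L)$, so each point $e^{-1/m}x$, being a proper convex combination of $x\in K|L$ and $o\in K|L$, lies in $K|L$; hence $f_{K,L}(e^{-1/m}x)$ is defined, and clearly $e^{-1/m}x\to x$. The upper bound is then immediate from Lemma~\ref{upper-semicont}: upper semicontinuity of $f_{K,L}$ at $x$ gives $f_{K,L}(x)\ge\limsup_{m\to\infty}f_{K,L}(e^{-1/m}x)$.

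For the lower bound I would use the elementary fact that, because $o\in\inte K$, the dilate $\lambda K$ satisfies $\lambda K\subset K$ for every $\lambda\in(0,1)$ (each point $\lambda p$ with $p\in K$ is a proper convex combination of the interior point $o$ and $p$, hence lies in $\inte K$). Consequently, for such $\lambda$,
$$
(\lambda x+L^\perp)\cap K\ \supseteq\ (\lambda x+L^\perp)\cap(\lambda K)\ =\ \lambda\bigl((x+L^\perp)\cap K\bigr),
$$
and taking the $k$-dimensional Hausdorff measure, which scales by $\lambda^k$ under this dilation, yields $f_{K,L}(\lambda x)\ge\lambda^k f_{K,L}(x)$. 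Applying this with $\lambda=e^{-1/m}$ gives $f_{K,L}(e^{-1/m}x)\ge e^{-k/m}f_{K,L}(x)$, and letting $m\to\infty$ we obtain $\liminf_{m\to\infty}f_{K,L}(e^{-1/m}x)\ge f_{K,L}(x)$. Combining the two bounds proves the corollary.

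There is essentially no serious technical obstacle here; the scaling inclusion $\lambda K\subset K$ is the only idea needed. The one point to keep an eye on is that $f_{K,L}$ may vanish at boundary points of $K|L$, but in that case both inequalities hold trivially — the upper bound from $f_{K,L}\ge 0$ together with upper semicontinuity, and the lower bound from the displayed estimate with $f_{K,L}(x)=0$ — so the argument remains uniform. (The statement of the corollary also presupposes $x\in K|L$, which is exactly what makes the slice $(x+L^\perp)\cap K$ nonempty, so no further case distinction is required.)
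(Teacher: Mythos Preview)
Your proof is correct and follows essentially the same approach as the paper: the upper bound via Lemma~\ref{upper-semicont}, and the lower bound via the inequality $f_{K,L}(e^{-1/m}x)\ge e^{-k/m}f_{K,L}(x)$. The only cosmetic difference is that the paper derives this inequality by citing the concavity of $f_{K,L}^{1/k}$, whereas you obtain it directly from the set inclusion $\lambda K\subset K$ and the scaling of $\mathcal{H}_k$; your route is arguably more elementary since it avoids invoking Brunn--Minkowski, but the two justifications amount to the same thing.
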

%\proof
\begin{proof} Since $o\in\inte K$, we get  by the concavity of
  $f_{K,L}^{1/k}$ that 
\begin{equation*}
f_{K,L}(e^{\frac{-1}m}x)\geq e^{\frac{-k}m}f_{K,L}(x).
\end{equation*}
 Since $f_{K,L}$ is also upper semicontinuous on $K|L$ by Lemma \ref{upper-semicont}, we conclude the corollary.
\end{proof}
%\proofbox

Although the gradient $\nabla f_{K,L}$ might not be bounded, its norm belongs
  to the space $L^1(K|L)$ of absolute integrable functions.

% Since $\ln f_{K,L}$ is concave 
% on $({\rm int}K)|L$, it follows that $f_{K,L}$  is Lipschitz on any compact subset of $({\rm int}K)|L$. In particular
% the derivative $\nabla f_{K,L}(x)$ exists at almost all $x\in ({\rm int}K)|L$.

\begin{lemma}
\label{nablafL1}
$\|\nabla f_{K,L}\|\in L^1(K|L)$, and thus the function $x\mapsto \langle \nabla f_{K,L}(x),x\rangle$ is in $L^1(K|L)$, as well.
\end{lemma}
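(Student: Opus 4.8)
The plan is to reduce the $L^1$-bound on $\|\nabla f_{K,L}\|$ to a statement about the one-dimensional directional behaviour of the concave function $g=f_{K,L}^{1/k}$ and then integrate. First I would fix an orthonormal basis $e_1,\dots,e_d$ of $L$ and observe that, since $\nabla f_{K,L} = k\,g^{k-1}\nabla g$ exists a.e.\ on $(\inte K)|L$ and $g$ is bounded on the bounded set $K|L$, it suffices to show $\|\nabla g\|\in L^1(K|L)$; the factor $g^{k-1}$ is harmless. For $\|\nabla g\|$ I would bound it componentwise: $\|\nabla g\|\le \sqrt d\,\max_i |\partial_i g|$, so it is enough to prove $\partial_i g\in L^1(K|L)$ for each coordinate direction $e_i$.

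Next, for a fixed $i$ I would slice $K|L$ by lines parallel to $e_i$ and use Fubini: writing a point of $L$ as $(t,w)$ with $t\in\R$ the $e_i$-coordinate and $w$ ranging over the orthogonal complement of $e_i$ inside $L$, the slice $J_w=\{t : (t,w)\in K|L\}$ is a (possibly degenerate) bounded interval, and on $\inte J_w$ the function $t\mapsto g(t,w)$ is concave, hence its derivative $\partial_i g(t,w)$ exists for a.e.\ $t$ and is monotone non-increasing in $t$. For a monotone function on a bounded interval the integral of its absolute value is controlled by its total variation together with its values near the endpoints; concretely, on any compact subinterval $[a,b]\subset\inte J_w$ we have $\int_a^b |\partial_i g(t,w)|\,dt \le |g(b,w)-g(a,w)| + 2\sup_{[a,b]}|g(\cdot,w)|\cdot(\text{number of sign changes})$, and since a monotone function changes sign at most once this is at most $|g(b,w)-g(a,w)| + 2\max_{K|L} g$. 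Letting $[a,b]\uparrow J_w$ and using that $g$ extends continuously (indeed is bounded) up to the boundary of each slice — here the upper semicontinuity from Lemma~\ref{upper-semicont}, or simply boundedness of $g$ on $K|L$, is what I need — gives $\int_{J_w}|\partial_i g(t,w)|\,dt \le 3\max_{K|L}g =: c_0 < \infty$, a bound uniform in $w$.

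Finally I would integrate this uniform slice estimate over the bounded set $\{w : J_w\neq\emptyset\}$, which has finite $\ha_{d-1}$-measure since it is contained in the projection of $K|L$; by Fubini this yields $\int_{K|L}|\partial_i f_{K,L}|\,d\ha_d < \infty$ up to the bounded factor $k\,\|g\|_\infty^{k-1}$, and summing over $i=1,\dots,d$ gives $\|\nabla f_{K,L}\|\in L^1(K|L)$. The second assertion is then immediate: $|\langle \nabla f_{K,L}(x),x\rangle|\le \|x\|\,\|\nabla f_{K,L}(x)\|$ and $\|x\|$ is bounded on the compact set $K|L$, so $x\mapsto\langle\nabla f_{K,L}(x),x\rangle$ lies in $L^1(K|L)$ as well.

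The main obstacle I anticipate is the behaviour of $g$ (hence of $\partial_i g$) near the relative boundary of $K|L$: the derivative $\partial_i g$ can blow up as one approaches $\partial(K|L)$, so the argument must genuinely exploit that the \emph{integral} of a monotone function stays finite even when the function itself is unbounded — i.e.\ one cannot simply bound $\partial_i g$ pointwise. Handling the degenerate slices $J_w$ (single points or empty) and making the passage $[a,b]\uparrow J_w$ rigorous via monotone convergence, together with the measurability of the slice integrals in $w$, are the points that need care; everything else is routine.
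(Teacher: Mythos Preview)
Your proposal is correct, and the reduction $\nabla f_{K,L}=k\,g^{k-1}\nabla g$ with $g=f_{K,L}^{1/k}$ is exactly how the paper begins as well. From that point on, however, the two arguments diverge.

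The paper observes that the graph $X$ of the concave function $g$ over $K|L$ is part of the boundary of a $(d+1)$-dimensional compact convex body (namely the hypograph of $g$), and then writes
\[
\int_{K|L}\|\nabla g\|\,d\mathcal{H}_d\;\le\;\int_{K|L}\sqrt{1+\|\nabla g\|^2}\,d\mathcal{H}_d\;=\;\mathcal{H}_d(X)\;<\;\infty,
\]
finishing in one line. Your route is instead a coordinate-wise total-variation argument: along each line parallel to $e_i$ the restriction of $g$ is concave, so $\partial_i g$ is monotone and its $L^1$ norm on the slice is just the total variation of $g$ there, bounded by $2\max_{K|L}g$; Fubini then gives $\partial_i g\in L^1(K|L)$ for each $i$. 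Both proofs exploit concavity of $g$, but the paper packages the estimate geometrically (finite surface area of a compact convex boundary) while you unpack it analytically (bounded variation on one-dimensional slices). Your argument is a bit longer and needs the routine measurability checks you flagged, but it is entirely self-contained and avoids invoking the area formula for graphs; the paper's version is slicker but presupposes that the reader is comfortable identifying $\int\sqrt{1+\|\nabla g\|^2}$ with the Hausdorff measure of the graph.
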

%\proof
\begin{proof} Let $f=f_{K,L}$.  Since $\nabla f(x) = \nabla
  (f^{\frac1k})^k(x) = kf^{\frac{k-1}k}(x)\nabla f^{\frac1k}(x)$ for
  almost all $x\in K|L$, it is  sufficient to prove 
$\|\nabla h\|\in L^1(K|L)$ for the concave function
$h=f^{\frac1k}$. However, by the Brunn-Minkowski theorem, the graph
$X$  of the function $h$ over $K|L$ is   part of the boundary of a
$(d+1)$-dimensional compact convex set. Thus
$$
\int_{K|L}\|\nabla h\|\,d{\mathcal H}_d(x) \leq \int_{K|L}\sqrt{1+\|\nabla h\|^2}\,d{\mathcal H}_d(x)={\mathcal H}_d(X)<\infty. \mbox{ \ }%\proofbox
$$
\end{proof}

The next two statements, which are the core ingredients of the proof of
Theorem~\ref{Henk-Linke-body} have been proved in the special case of  polytopes in
\cite{HeL14}.

% The following statement is proved for polytopes in  M. Henk, E. Linke \cite{HeL14}. This and Proposition~\ref{fcentroid} are 

\begin{prop}
\label{VKf}
If $o\in\inte K$, then
$$
n\,\V_K(L\cap S^{n-1})=d\,\V(K)+\int_{K|L}\langle \nabla f_{K,L}(x),x\rangle\,d{\mathcal H}_d(x).
$$
\end{prop}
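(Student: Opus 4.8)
The plan is to compute the cone-volume measure $\V_K(L\cap S^{n-1})$ by integrating the defining formula \eqref{Gauss-cone} over those boundary points whose outer normal lies in $L$, and to relate this to the $X$-ray function $f_{K,L}$ via a Gau\ss-Green type argument. First I would note that a boundary point $x\in\partial_* K$ has $\nu_K(x)\in L\cap S^{n-1}$ exactly when the supporting hyperplane at $x$ contains a translate of $L^\perp$; equivalently, such points project onto the relative boundary of $K|L$, and the fibre $(x+L^\perp)\cap K$ over a relative boundary point $\hat x\in\bd(K|L)$ realizes the value $f_{K,L}(\hat x)$. Thus the ``lateral'' part of $\partial K$ sitting over $\bd(K|L)$ is the locus contributing to $\V_K(L\cap S^{n-1})$, and the contribution of each such piece, by the second integral in \eqref{Gauss-cone}, measures $\langle x,\nu_K(x)\rangle$ against surface area.

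Next I would set up the identity by viewing $K$ as swept out by the fibres $(\hat x+L^\perp)\cap K$ over $\hat x\in K|L$ and applying the divergence theorem to the vector field $x\mapsto x$ on $K$, but organized fibrewise. Concretely, write $\V(K)=\frac1n\int_{\partial K}\langle x,\nu_K(x)\rangle\, d\ha_{n-1}(x)$ and split $\partial K$ into the part over $\inte(K|L)$ (the ``top and bottom'' graphs of the fibre extremities, with normals not in $L$) and the part over $\bd(K|L)$. For the first part, Fubini in the $L\oplus L^\perp$ splitting together with the Gau\ss-Green theorem applied on each fibre $L^\perp$ turns the fibre integrals into $\int_{K|L} f_{K,L}(\hat x)\, d\ha_d(\hat x)$ plus a gradient term; the correct bookkeeping, using $\nabla f_{K,L}$ as the rate of change of the fibre volume and the centroid relation \eqref{eq:centroid} to kill cross terms, should produce $d\,\V(K)+\int_{K|L}\langle\nabla f_{K,L}(\hat x),\hat x\rangle\, d\ha_d(\hat x)$ for $n\,\V_K(L\cap S^{n-1})$. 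I would make this rigorous by first establishing it for smooth, strictly convex $K$ (where $\partial K$ is a $C^1$ manifold and $f_{K,L}$ is $C^1$ on the open set $\inte(K|L)$) and then passing to the limit: the right-hand side is continuous under Hausdorff convergence by Corollary~\ref{ray-limit} and the $L^1$-control of $\nabla f_{K,L}$ from Lemma~\ref{nablafL1} (together with the fact that the graph of $f_{K,L}^{1/k}$ depends continuously on $K$), and the left-hand side, $\V_K(L\cap S^{n-1})$, is handled by weak continuity of cone-volume measures away from the boundary of $L$ or by a direct approximation argument since $L\cap S^{n-1}$ is closed.

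The cleanest rigorous route avoids smoothing and instead uses the upper semicontinuity and Lipschitz structure already assembled: integrate the trivial identity $\frac{d}{dt}\bigl(f_{K,L}(t\hat x)\,t^{d}\bigr)$-type scaling relation, or more directly apply integration by parts in $\R^d$ to the function $f_{K,L}$ on $K|L$. Indeed $\int_{K|L}\langle\nabla f_{K,L}(\hat x),\hat x\rangle\, d\ha_d(\hat x) = \int_{\bd(K|L)}f_{K,L}(\hat x)\,\langle \hat x,\nu_{K|L}(\hat x)\rangle\, d\ha_{d-1}(\hat x) - d\int_{K|L} f_{K,L}(\hat x)\, d\ha_d(\hat x) = \int_{\bd(K|L)}f_{K,L}(\hat x)\,\langle \hat x,\nu_{K|L}(\hat x)\rangle\, d\ha_{d-1}(\hat x) - d\,\V(K)$, where $\nu_{K|L}$ is the outer normal of $K|L$ within $L$; here I use the Gau\ss-Green theorem in $\R^d\cong L$ for the Lipschitz, integrable-gradient function $f_{K,L}$ (boundary values taken in the upper-semicontinuous sense, which is exactly why Lemma~\ref{upper-semicont} is needed), and $\V(K)=\int_{K|L}f_{K,L}$. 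It then remains to identify $n\,\V_K(L\cap S^{n-1})$ with $\int_{\bd(K|L)}f_{K,L}(\hat x)\,\langle\hat x,\nu_{K|L}(\hat x)\rangle\, d\ha_{d-1}(\hat x)$: a point $u\in L\cap S^{n-1}$ is an outer normal of $K$ precisely at boundary points over the boundary point $\hat x\in\bd(K|L)$ with outer normal $u$ in $L$, the corresponding fibre $\nu_K^{-1}(u)$-slab contributes $\frac1n\langle x,u\rangle$ integrated over a set of $\ha_{n-1}$-measure that fibres as $\ha_{d-1}\times\ha_{n-d}$, and since $\langle x,u\rangle=\langle\hat x,u\rangle=h_K(u)$ is constant along the $L^\perp$-fibre while the fibre has $\ha_{n-d}$-measure $f_{K,L}(\hat x)$, summing (integrating in $u$, i.e. over $\bd(K|L)$) gives exactly that boundary integral. \textbf{The main obstacle} I anticipate is this last measure-theoretic identification over $\bd(K|L)$ done without smoothness: the projection $\partial K\to K|L$ can collapse whole faces of $K$ onto lower-dimensional pieces of $\bd(K|L)$, so I must check that the $\ha_{n-1}$-a.e.\ normal structure of $\partial K$ restricted to $\nu_K^{-1}(L\cap S^{n-1})$ fibres correctly over the $\ha_{d-1}$-a.e.\ normal structure of $\bd(K|L)$, and that the exceptional sets (singular boundary points, points where $\bd(K|L)$ is not differentiable) are negligible for all relevant measures. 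Handling this carefully — likely again by approximating $K$ by smooth strictly convex bodies and invoking the already-established continuity of both sides — is where the real work lies.
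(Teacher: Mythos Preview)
Your ``cleanest route'' in the third paragraph is exactly the paper's approach: apply the divergence (Gau\ss--Green) theorem in $L\cong\R^d$ to the vector field $F(\hat x)=f_{K,L}(\hat x)\,\hat x$, obtaining the boundary integral $\int_{\partial(K|L)} f_{K,L}(\hat x)\,\langle\hat x,\nu_{K|L}(\hat x)\rangle\,d\ha_{d-1}(\hat x)$ on one side and $d\,\V(K)+\int_{K|L}\langle\nabla f_{K,L},\hat x\rangle\,d\ha_d$ on the other, and then identify the boundary integral with $n\,\V_K(L\cap S^{n-1})$. The one technical device you are missing is how the paper makes the Gau\ss--Green step rigorous: since $f_{K,L}$ is only \emph{locally} Lipschitz on $(\inte K)|L$, the paper applies the divergence theorem not on $K|L$ itself but on the exhausting compact domains $E_m=e^{-1/m}(K|L)$, and then lets $m\to\infty$ using Corollary~\ref{ray-limit} (for the boundary term) and Lemma~\ref{nablafL1} (for the interior term). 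This replaces your vaguer ``boundary values in the upper-semicontinuous sense''. The measure-theoretic identification you flag as the main obstacle is handled directly and is less delicate than you fear: writing $X=\partial K\cap(L^\perp+\partial(K|L))$, one has $\partial_*K\cap X=\nu_K^{-1}(L\cap S^{n-1})$, and for $z\in\partial_*K\cap X$ with $y=z|L$ one has $\nu_{K|L}(y)=\nu_K(z)$ and $\langle z,\nu_K(z)\rangle=\langle y,\nu_{K|L}(y)\rangle$; Fubini over the $L^\perp$-fibre then gives the identification immediately.

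Your proposed fallback of approximating $K$ by smooth strictly convex bodies is genuinely problematic and you should drop it. The issue is the left-hand side: cone-volume measures converge only weakly under Hausdorff approximation, and $L\cap S^{n-1}$ is a closed set of $S^{n-1}$-measure zero, so $K\mapsto\V_K(L\cap S^{n-1})$ is not continuous (only upper semicontinuous). For instance, if $K$ is a polytope with a facet normal in $L$, any smooth approximation will have $\V_{K_j}(L\cap S^{n-1})=0$. So the limiting argument fails precisely where it matters. The paper's exhaustion-within-$K|L$ avoids this entirely by never changing $K$.
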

\begin{proof}  Let $f=f_{K,L}$, and let $F(x)=f(x)x$ for $x\in K|L$, which is
a Lipschitz vector field on any compact subset of $(\inte K)|L$
(cf.~\eqref{rem:basicfacts_concave} i)). To state the 
Gau\ss-Green divergence theorem for Lipschitz vector fields on
Lipschitz domains, we follow  W.F. Pfeffer \cite{Pfe12}. Naturally,
$$
E_m=e^{\frac{-1}m}K|L\subset (\inte K)|L
$$ 
is a compact Lipschitz domain for $m\geq 1$, and hence $\op_\star
E_m=\op E_m$ according to Proposition~4.1.2 in \cite{Pfe12},
where 
$\op(E_m)$ denotes the (relative) boundary with respect to the linear space $L$.

Therefore Theorem~6.5.4  in \cite{Pfe12} (going back to H. Federer \cite{Fed45}) yields that
\begin{equation}
\label{divergence-theorem}
\int_{\op E_m}\langle F(x),\nu_{E_m}(x)\rangle\,d{\mathcal H}_{d-1}(x)=
\int_{E_m}{\rm div}F(x)\,d{\mathcal H}_d(x).
\end{equation}
If $y\in \op (K|L)$ then $\nu_{K|L}(y)=\nu_{E_m}(e^{\frac{-1}m}y)$; thus the left hand side of 
(\ref{divergence-theorem}) is
\begin{align*}
\int_{\op E_m}\langle F(x),\nu_{E_m}(x)\rangle\,d{\mathcal H}_{d-1}(x)=&
e^{\frac{-(d-1)}m}
\int_{\op (K|L)}\langle F(e^{\frac{-1}m}y),\nu_{K|L}(y)\rangle\,d{\mathcal H}_{d-1}(y)\\
=& e^{\frac{-d}m}
\int_{\op(K|L)}f(e^{\frac{-1}m}y)\langle y,\nu_{K|L}(y)\rangle\,d{\mathcal H}_{d-1}(y).
\end{align*}
Therefore, Corollary~\ref{ray-limit} and the Lebesgue dominated convergence theorem yield 
\begin{equation}
\lim_{m\to\infty}\int_{\op E_m}\langle F(x),\nu_{E_m}(x)\rangle\,d{\mathcal H}_{d-1}(x)
=\int_{\op (K|L)}f(y)\langle y,\nu_{K|L}(y)\rangle\,d{\mathcal H}_{d-1}(y).
\label{eq:stepone}
\end{equation}
Now, in order to evaluate the right hand side   
let $X=\partial K\cap (L^\bot+\op (K|L))$. Then  the set of
smooth points of $\partial K$ in $X$, i.e.,  $\partial_\star K\cap X$
coincides with the set of  points in $\nu_K^{-1}(L\cap S^{n-1})$. In addition,
if $z\in X\cap \partial_\star K$, then $\nu_{K|L}(y)=\nu_K(z)$ for
$y=z|L$, and  thus  \eqref{Gauss-cone} and \eqref{eq:stepone} give 
\begin{eqnarray}
\nonumber
\lim_{m\to\infty}\int_{\op E_m}\langle F(x),\nu_{E_m}(x)\rangle\,d{\mathcal H}_{d-1}(x)
&=&\int_{X}\langle z,\nu_{K}(z)\rangle\,d{\mathcal H}_{n-1}(z)\\
\label{left-div-theo}
&=&n\,\V_K(L\cap S^{n-1}).
\end{eqnarray}

Next, if $\nabla f(x)$ exists at  $x\in\inte(K)|L$, then 
$$
{\rm div}F(x)=d\,f(x)+\langle x,\nabla f(x)\rangle.
$$
Therefore the right hand side of \eqref{divergence-theorem} is
(cf.~Proposition \ref{rem:basicfacts_concave} ii), Lemma \ref{nablafL1}) 
$$
\int_{E_m}{\rm div}F(x)\,d{\mathcal H}_d(x)=d\int_{E_m}f(x)\,d{\mathcal H}_d(x)+
\int_{E_m}\langle x,\nabla f(x)\rangle\,d{\mathcal H}_d(x).
$$
Since $\int_{K|L}f(x)\,d{\mathcal H}_d(x)=\V(K)$, we deduce that
\begin{equation}
\label{right-div-theo}
\lim_{m\to\infty}\int_{E_m}{\rm div}F(x)\,d{\mathcal H}_d(x)=dV(K)+
\int_{K|L}\langle x,\nabla f(x)\rangle\,d{\mathcal H}_d(x).
\end{equation}
Combining \eqref{divergence-theorem}, \eqref{left-div-theo} and
\eqref{right-div-theo} completes the proof of the proposition.
\end{proof}
%\proofbox

% Proposition~\ref{fcentroid} has been actually proved by M. Henk, E. Linke \cite{HeL14}. We recall their argument for the sake of completness.

If $K$ is an $o$-symmetric convex body,  we know by the Brunn-Minkowski
inequality \eqref{eq:brunn_minkowski} that $f_{K,L}(x)$ attains its maximum at the origin
$0$. Hence, in view of \eqref{eq:gradient_concave} we know that
$\langle \nabla f_{K,L}(x),x\rangle\leq 0$ for almost every $x\in
K|L$. Although, this is no longer true for bodies with centroid at
$0$, the next proposition shows that it is true in the average.   

\begin{prop}
\label{fcentroid}
If $c(K)=o$, then
$$
\int_{K|L}\langle \nabla f_{K,L}(x),x\rangle\,d{\mathcal H}_d(x)\leq 0,
$$
with equality if and only if $f_{K,L}$ is constant on $K|L$.
\end{prop}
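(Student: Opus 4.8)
\medskip
\noindent\emph{Proof strategy.} Write $f=f_{K,L}$ and $V=\V(K)$, and recall $\int_{K|L}f=V$. The plan is to regard
$\mathrm{d}\mu:=V^{-1}f(x)\,\mathrm{d}\ha_d(x)$ as a probability measure on $K|L$ and to observe that, by \eqref{eq:centroid}, its barycenter is the origin, i.e. $\int_{K|L}x\,\mathrm{d}\mu(x)=0$. The point is then to use the concavity of $\log f$ twice: globally, via Jensen's inequality for $\mu$, and infinitesimally, via the subgradient inequality \eqref{eq:gradient_concave} with the origin as test point.

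First I would note that $c(K)=o$ forces $o\in\inte K$, hence $0\in\inte(K|L)$ and $f(0)>0$; since $f$ is bounded on the compact set $K|L$, the function $f\log f$ is bounded there, so $\int_{K|L}f\log f\,\mathrm{d}\ha_d$ is finite and $\log f\in L^1(\mu)$. Jensen's inequality for the concave function $\log f$ and the measure $\mu$ with barycenter $0$ then gives
\[
\frac1V\int_{K|L}f(x)\log f(x)\,\mathrm{d}\ha_d(x)=\int_{K|L}\log f\,\mathrm{d}\mu\ \le\ \log f(0).
\]
Next, for $\ha_d$-almost every $x$ the gradient $\nabla\log f(x)$ exists (Proposition~\ref{rem:basicfacts_concave}~ii)); applying \eqref{eq:gradient_concave} at such an $x$, tested against the point $0\in\inte(K|L)$, yields $\log f(0)-\log f(x)\le-\langle\nabla\log f(x),x\rangle$, i.e.
\[
\langle\nabla f(x),x\rangle=f(x)\,\langle\nabla\log f(x),x\rangle\ \le\ f(x)\bigl(\log f(x)-\log f(0)\bigr)
\]
for a.e.\ $x\in K|L$. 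Integrating over $K|L$ — the left-hand side is integrable by Lemma~\ref{nablafL1} — and inserting the Jensen estimate gives
\[
\int_{K|L}\langle\nabla f(x),x\rangle\,\mathrm{d}\ha_d(x)\ \le\ \int_{K|L}f\log f\,\mathrm{d}\ha_d-V\log f(0)\ =\ V\Bigl(\textstyle\int_{K|L}\log f\,\mathrm{d}\mu-\log f(0)\Bigr)\ \le\ 0,
\]
which is the asserted inequality.

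For the equality case: if equality holds throughout, then — since the pointwise inequality was multiplied by $f\ge0$ and $f>0$ on $\inte(K|L)$ — we must have $\langle\nabla\log f(x),x\rangle=\log f(x)-\log f(0)$ for a.e.\ $x$. This says the supporting affine functional of the concave function $\log f$ at $x$ already takes the value $\log f(0)$ at the origin, which forces $\log f$ to be affine on the whole segment $[0,x]$; letting $x$ range over a dense set and using the continuity of $f$ on $\inte(K|L)$, we get that $\log f$ is affine along every ray from $0$. Hence along each such segment $t\mapsto f^{1/k}(tx)$ is a positive exponential function of $t$, and being also concave by Brunn--Minkowski \eqref{eq:brunn_minkowski} it must be constant; therefore $f\equiv f(0)$ on $K|L$. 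The converse implication is immediate, since $f$ constant makes $\nabla f\equiv 0$.

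I expect no genuine obstacle here, only bookkeeping: one must make sure that ``multiply an a.e.\ inequality by $f$ and integrate'' is legitimate (finiteness of $\int_{K|L}f|\log f|$ and of $\int_{K|L}|\langle\nabla f,x\rangle|$, the latter being precisely Lemma~\ref{nablafL1}), and the equality discussion must be run carefully to pass from ``$\log f$ affine on rays'' to ``$f$ constant'' using the concavity of $f^{1/k}$. It is worth stressing that the centroid hypothesis $c(K)=o$ enters exactly and only through the vanishing of the barycenter of $\mu$ in the Jensen step; the equality characterization uses only that $f^{1/k}$ is concave.
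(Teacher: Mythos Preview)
Your argument is correct and is essentially the paper's own proof: your pointwise bound $\langle\nabla\log f(x),x\rangle\le \log f(x)-\log f(0)$ is the paper's left inequality in \eqref{hprimef}, and your Jensen step, once unwound, is precisely the paper's right inequality $\log f(x)-\log f(0)\le\langle g(o),x\rangle$ multiplied by $f(x)$ and integrated against~\eqref{eq:centroid}. The only cosmetic gap is that your equality discussion yields $f\equiv f(0)$ on $\inte(K|L)$; to get constancy on all of $K|L$ you should invoke Corollary~\ref{ray-limit}, exactly as the paper does.
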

\begin{proof} Again, let $f=f_{K,L}$ and let $g:K|L\to L$ be a subgradient of $f$.  % Since the centroid of $K$ is the origin, we have 
% \begin{equation}
% \label{centroidfo0}
% \int_{K|L}xf(x)\,d{\mathcal H}_d(x)=o.
% \end{equation}
% As $\ln f$ is concave, there exists a so called subgradient $g(x)\in L$ for any $x\in ({\rm int}K)|L$, satisfying
% \begin{equation}
% \label{lndiff}
% \ln f(y)-\ln f(x)\leq \langle g(x),y-x\rangle \mbox{ \ for $y\in({\rm int} K)|L$.}
% \end{equation}
% In addition, if $\nabla f$ exists at $x\in{(\rm int}K)|L$, then
% $g(x)=\frac{\nabla f(x)}{f(x)}$. 
For $z\in{(\rm int}K)|L$, applying \eqref{eq:gradient_concave} to
$y=o$ and $x=z$ first, and next to $y=z$ and $x=o$, we deduce that
\begin{equation}
\label{hprimef}
\langle g(z),z\rangle\leq \ln f(z)-\ln f(0)\leq \langle g(o),z\rangle, 
\end{equation}
where $g$ is a subgradient of $f$. In particular, if $\nabla f$ exists
at $z\in{(\rm int}K)|L$, then $\langle \nabla f(z),z\rangle\leq
\langle g(o),zf(z)\rangle$. Together with the property $c(K)=0$ we get
from \eqref{eq:centroid} that  
\begin{equation}
\label{fcentroid0}
\int_{K|L}\langle \nabla f(z),z\rangle\,d{\mathcal H}_d(z)\leq 
\int_{K|L}\langle g(o),zf(z)\rangle\,d{\mathcal H}_d(z)=0.
\end{equation}

Let us assume that equality holds in (\ref{fcentroid0}), and hence for
almost all $z\in (\inte K)|L$ in
(\ref{hprimef}). In particular, we have
$\ln f(x)-\ln f(0)=\langle g(o),x\rangle$, and in turn $f(x)=f(0)e^{\langle g(o),x\rangle}$
for almost all $x\in (\inte K)|L$. Since $f$ is continuous on $(\inte K)|L$, 
Corollary~\ref{ray-limit} yields that
$f(x)=f(0)e^{\langle g(o),x\rangle}$ for all $x\in K|L$. However $f^{\frac1k}$ is concave, therefore
$g(o)=o$, or in other words, $f$ is constant.
\end{proof}

Now, we are ready to give the proof of Theorem~\ref{Henk-Linke-body}. 
% we need the following version of the Brunn-Minkowski inequality
% \eqref{eq:brunn_minkowski} (see, e.g., R. Schneider \cite{Sch93}). 
% If $\lambda\in(0,1)$ and $C_1$ and $C_2$ are compact convex sets in $\R^k$ with ${\mathcal  H}_k(C_1)={\mathcal  H}_k(C_2)=V>0$, then
% \begin{equation}
% \label{BrunnMinkwski2body}
% {\mathcal  H}_k(\lambda\,C_1+(1-\lambda)C_2)\geq V,
% \end{equation}
% with equality if and only if $C_1$ and $C_2$  are translates.\\
\begin{proof}[\bf Proof of Theorem~\ref{Henk-Linke-body}] Combining Propositions~\ref{VKf} and \ref{fcentroid} yields that
$$
V_K(L\cap S^{n-1})=\frac{d}n\,V(K)+\frac{1}n\,\int_{K|L}\langle \nabla f_{K,L}(x),x\rangle\,d{\mathcal H}_d(x)\leq
\frac{d}n\,V(K).
$$ 
Let us assume that  equality holds, and hence $f_{K,L}(x)=f_{K,L}(o)$ for $x\in K|L$ according to Proposition~\ref{fcentroid}.
Let $C(x)=K\cap (x+L^\bot)$ for  $x\in K|L$.
For any $x\in K|L$, there exists $\eta>0$ such that $-\eta x\in K|L$, and hence
$$
\mbox{$\frac{\eta}{1+\eta}\,C(x)+\frac{1}{1+\eta}\,C(-\eta x)\subset C(o).$}
$$
Therefore $f_{K,L}(x)=f_{K,L}(-\eta x)=f_{K,L}(o)$ and the equality
characterization of the 
Brunn-Minkowski inequality \eqref{eq:brunn_minkowski} implies  that $C(x)$ is a translate
of $C(o)$. 

Choose linearly independent $v_1,\ldots,v_d\in K|L$ such that
$v_0=-v_1-\ldots-v_d\in K|L$, as well. By $\sum_{i=0}^dv_i=o$ we have 
$\sum_{i=0}^d \frac1{d+1}\,C(v_i)\subset C(o)$, and we deduce that %actually
$\sum_{i=0}^d \frac1{d+1}\,c(C(v_i))=c(C(o))$. In particular, 
$$
c(C(o))\in\Pi=\aff\{c(C(v_0)),\ldots,c(C(v_d))\}, 
$$
where $\aff\{\}$ denotes the affine hull.
Next let $x\in K|L$. There exists $\eta>0$ such that $-\eta x\in[v_0,\ldots,v_d]$, and so
$\lambda x+\sum_{i=0}^d\lambda_i v_i=o$, where $\lambda+\sum_{i=0}^d\lambda_i=1$ and 
$\lambda,\lambda_i\geq 0$ for $i=0,\ldots,d$.
It follows as above that $\lambda\,C(x)+\sum_{i=0}^d\lambda_iC(v_i)=C(o)$,
and hence $c(C(x))\in\Pi$, as well. Therefore, writing
$M=\Pi \cap (K+L^\bot)$ and $C=C(o)-c(C(o))$, we get $K=C+M$. In particular,
${\rm supp}\,V_K\subset L\cup \Pi^\bot$ and $L\cap\Pi^\perp=\{0\}$.
\end{proof}

\section{Another property of the cone-volume measure
if the centroid is the origin \label{another_property}}

Let us recall two basic notions about convex bodies.
Firstly, a convex body in $\R^n$ is called a cylinder
if it is of the form $[p,q]+C$ for $p,q\in\R^n$ and an
$(n-1)$-dimensional convex compact set $C$;  $p+C$ and $q+C$ are
called bases of the cylinder.

 Secondly, let $v\in S^{n-1}$, and let $M$ be a convex body in $\R^n$.
For any $t$ with $-h_M(-v)<t<h_M(v)$,
we replace the section $M\cap(tv+v^\bot)$  with the $(n-1)$-ball
 of the same $(n-1)$-measure, centered at $tv$  in $tv+v^\bot$. Here,
 $v^\bot$ is the abbreviation for the linear space orthogonal to $v$. 

The closure $\widetilde{M}$ of the union
of these $(n-1)$-balls  is called the Schwarz rounding of $M$
with respect to $\R v$. It is a convex body 
by the Brunn-Minkowski theorem, and readily satisfies $\V(\widetilde{M})=\V(M)$.
If $\widetilde{M}$ is a cylinder, then all sections of the form $M\cap (tv+v^\bot)$
are of the same $(n-1)$-measure, and hence the equality
case of the Brunn-Minkowski theorem yields that $M$ is a cylinder, as
well. For more on Schwarz rounding we refer to \cite{Gruberbook}. 

\begin{prop}
\label{hemi-sphere} Let $K\in\Kn$ with $c(K)=o$ and $V(K)=1$. Then 
\begin{equation*}
         \V_K(\Omega)\geq \frac{1}{2n},
\end{equation*}
 for any open hemisphere $\Omega\subset S^{n-1}$. Equality holds if and
 only if $K$ 
is a cylinder whose generating segment is orthogonal
to the linear $(n-1)$-space bounding the hemisphere $S$. 
\end{prop}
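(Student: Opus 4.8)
The plan is to parametrize the hemisphere by a unit vector $v\in S^{n-1}$, so that $\Omega=\{u\in S^{n-1}:\langle u,v\rangle>0\}$, and to compute $\V_K(\Omega)$ by intersecting $K$ with the half-space $H^+=\{x:\langle x,v\rangle\ge 0\}$. Writing $f(t)=f_{K,\mathbb R v}(tv)=\mathcal H_{n-1}(K\cap(tv+v^\perp))$ for the one-dimensional X-ray of $K$ in the direction $v$, the normalization $\V(K)=1$ gives $\int f(t)\,dt=1$, and the centroid condition $c(K)=o$ combined with \eqref{eq:centroid} gives $\int t\,f(t)\,dt=0$. The idea is to relate $\V_K(\Omega)$ to this single log-concave function $f$; then the proposition becomes a one-variable statement about log-concave densities with mean zero.

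First I would reduce to the rotationally symmetric case: replace $K$ by its Schwarz rounding $\widetilde K$ with respect to $\mathbb R v$. This preserves volume, preserves every section's $(n-1)$-measure (hence preserves $f$), and preserves the centroid on the axis; moreover I claim it does not increase $\V_K(\Omega)$ — intuitively, symmetrizing ``straightens'' the boundary over the open half and can only shift cone-volume from $\Omega$ toward the equatorial great subsphere $v^\perp\cap S^{n-1}$ (which carries none of the hemisphere's mass). Checking this monotonicity carefully is, I expect, the main obstacle: one must argue that $\V_{\widetilde K}(\Omega)\le\V_K(\Omega)$, for instance via the divergence-theorem representation in Proposition~\ref{VKf} applied to the subspace $L=v^\perp$, which expresses $\V_K(v^\perp\cap S^{n-1})$ in terms of $\int\langle\nabla f_{K,v^\perp},x\rangle$, together with $\V_K(\Omega)+\V_K(-\Omega)+\V_K(v^\perp\cap S^{n-1})=1$ and the symmetry $\V_{\widetilde K}(\Omega)=\V_{\widetilde K}(-\Omega)=\tfrac12(1-\V_{\widetilde K}(v^\perp\cap S^{n-1}))$. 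Once symmetric, $\V_{\widetilde K}(\Omega)$ is computed directly from the profile: if $\widetilde K=\{(t,y):t\in[a,b],\ \|y\|\le\rho(t)\}$ with $\rho$ a concave function (so $f(t)=\kappa_{n-1}\rho(t)^{n-1}$, $\kappa_{n-1}$ the volume of the unit $(n-1)$-ball), then integrating $\langle x,\nu(x)\rangle/n$ over the ``upper'' part of $\partial\widetilde K$ and using that on the lateral boundary $\langle x,\nu(x)\rangle=(\rho(t)-t\rho'(t))/\sqrt{1+\rho'(t)^2}$ while the surface element is $\sqrt{1+\rho'(t)^2}$ times the $(n-2)$-sphere factor, one gets a clean one-dimensional integral plus the contribution of the cap $\{t=b\}$.

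After this reduction the inequality should follow from the mean-zero and unit-mass constraints by a variational/extremal argument on log-concave $f$ on an interval: push the extremal profile to the boundary of the constraint set, where $f^{1/(n-1)}$ is \emph{affine} — i.e. $\widetilde K$ is a cone — and check the two degenerate limits, namely the cone with apex on the $\Omega$-side and the cylinder, verifying that the minimum $1/(2n)$ is attained exactly at the cylinder with generator $\mathbb R v$ (for which $f$ is constant, every section through the origin splits the lateral area in half, the top base contributes its full cone $[o,F]$ of volume $\tfrac1n\V(K)$ split between the two hemispheres, and a direct count gives $\V_K(\Omega)=\tfrac1{2n}$). The equality analysis then unwinds the symmetrization: equality forces $\widetilde K$ to be a cylinder along $\mathbb R v$, and by the stated equality case of the Brunn–Minkowski theorem for Schwarz rounding this forces $K$ itself to be a cylinder whose generating segment is parallel to $v$, i.e.\ orthogonal to the bounding hyperplane of $\Omega$, as claimed.
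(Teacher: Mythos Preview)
Your outline has a genuine gap at precisely the step you flag as ``the main obstacle'': the claim that Schwarz rounding of $K$ with respect to $\mathbb R v$ does not increase $\V_K(\Omega)$. The tool you propose---Proposition~\ref{VKf} with $L=v^\perp$---does not help: that formula involves $f_{K,v^\perp}(x)=\mathcal H_1\bigl(K\cap(x+\mathbb R v)\bigr)$, the \emph{vertical} chord length over $x\in v^\perp$, and Schwarz rounding along $\mathbb R v$ replaces \emph{horizontal} slices by balls, so it does not preserve $f_{K,v^\perp}$. Consequently neither side of the identity in Proposition~\ref{VKf} is controlled, and the decomposition $\V_K(\Omega)+\V_K(-\Omega)+\V_K(v^\perp\cap S^{n-1})=1$ alone cannot separate the effect on $\Omega$ from that on $-\Omega$. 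Your heuristic that symmetrization ``shifts cone-volume toward the equator'' would in fact have to be proved simultaneously for $\Omega$ and $-\Omega$, and nothing in the proposal does this.

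The paper avoids this difficulty by inserting a different operation \emph{before} any Schwarz rounding: after a linear transform making the tangent hyperplane at the top point $\lambda v$ equal to $\lambda v+v^\perp$, it \emph{shakes} $K$ down to that hyperplane, i.e.\ translates every vertical chord so its upper endpoint lies in $H=\lambda v+v^\perp$. Writing $\Xi_M=\bigcup_{x\in M|v^\perp}[o,\varphi_M(x)]$ for the cone-region satisfying $\V_M(\Omega)=\V(\Xi_M)$, the shaken body $K'$ has $\Xi_{K'}=[o,C]$ with $C=K'\cap H$, and a direct line-by-line comparison shows $\V(\Xi_K)\geq\V(\Xi_{K'})$. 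Crucially, shaking moves the centroid upward along $v$. Only then is Schwarz rounding applied (to $K'$, not $K$): since $K'$ already has a flat top, so does $\widetilde K$, the centroid stays on the axis at the same height, and $\Xi$ remains a cone of the same volume. The final comparison with the cylinder $Z$ over $H\cap\widetilde K$ of volume $1$ is again a centroid-height argument. Thus the monotonicity you need is obtained not from Schwarz symmetrization but from shaking; your one-variable variational step on log-concave profiles is replaced by the elementary observation that passing from $\widetilde K$ to $Z$ raises the centroid further.
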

\begin{proof} Let $\Omega\subset S^{n-1}$ be an open hemisphere, and
  let  $v\in S^{n-1}$ such that 
$$
\Omega=\left\{u\in S^{n-1}:\,\langle u,v\rangle>0\right\}.
$$
%We keep $v$ fixed during the argument.
For any convex body $M\in\Kn$ with $o\in\inte M$  and
$x\in M|v^\bot$, let 
$$
f_M(x)=\max\{t\in\R\,:x+tv\in M\},
$$
and let $\varphi_M(x)=x+f_M(x)\,v$. %Here $\pi_{v^\bot}M$ denotes the
%orthogonal projection of $M$ onto the hyperplane orthogonal to $v$.

In particular the points of $\partial M$ where
all exterior normals have acute angle with $v$ are of the form $\varphi_M(x)$
for $x\in \inte M|v^\bot$. Therefore
$$
\V_M(\Omega)=\V(\Xi_M)
\mbox{ \ for }\Xi_M=\bigcup_{x\in M|v^\bot}[o,\varphi_M(x)].
$$
For $x\in(\inte M|v^\bot)\backslash\{o\}$, let 
$z=\theta^{-1} x\in\partial M|v^\bot$ 
for some $\theta\in(0,1)$. Since $[\varphi_M(z),o,\varphi_M(o)]\subset \Xi_M$, we have

\begin{equation}
\begin{minipage}{0.9\hsize}
$x+\R v$ intersects $\Xi_M$
in a segment of length at least $(1-\theta)\|\varphi_M(o)\|$,
with equality if and only if $[\varphi_M(z), \varphi_M(o)]\subset\partial M$.
\end{minipage}
\label{eq:(i)}
\end{equation}

%  We prove that for any convex body $K$ with $c(K)=o$ and $V(K)=1$ in $\R^n$, we have
% \begin{equation}
% \label{mu2n}
% S_0(K,\Omega)\geq \frac1{2n},
% \end{equation}
% with equality if and only if $K$ is a cylinder whose base is not parallel
% to $v$.
Now, let $\lambda=f_K(o)$, and hence $\lambda v\in\partial K$.  After a
linear transformation  
we may assume that
the tangent hyperplane $H$ at $\lambda v$ is given by  $H=\lambda v+v^\bot$.

We shake $K$ down to $H$, i.e.,  for each $x\in K|v^\bot$,
we translate the section $(x+\R v)\cap K$ by
$(\lambda-f_K(x))v$ 
and hence one endpoint lands in $H$. We write $K'$ to
denote the resulting convex body, which satisfies 
$$
K'|v^\bot=K|v^\bot=C-\lambda\,v 
\mbox{ \ for $C=K'\cap H$}.
$$

In addition $\V(K')=\V(K)=1$, and $\Xi_{K'}$ is the cone $[o,C]$.

For $x\in(\inte K|v^\bot)\backslash\{o\}$, it follows  
by \eqref{eq:(i)} that $x+\R v$ intersects $\Xi_K$
in a segment of length at least the length of
$\Xi_{K'}\cap(x+\R v)$. Therefore, Fubini's theorem yields
\begin{equation}
\label{K'K}
\V(\Xi_K)\geq \V(\Xi_{K'}).
\end{equation}
Furthermore,   Fubini's theorem implies that
\begin{eqnarray*}
\langle c(K'),u\rangle&=&\langle c(K),u\rangle=0
\mbox{ \ for $u\in v^\bot$};\\
\langle c(K'),v\rangle&\geq&\langle c(K),v\rangle=0
\mbox{ \ with equality if and only if $K'=K$}.
\end{eqnarray*}
We deduce
\begin{equation}
\label{K'c}
 c(K')=\eta v \mbox{ \ for $\eta\geq 0$, with $\eta=0$ if and only if $K'=K$}.
\end{equation}

Next let $\widetilde{K}$ be the
 Schwarz rounding of $K'$ with respect to $\R v$. 
It follows from the rotational symmetry
of  $\widetilde{K}$ that $\langle c(\widetilde{K}),u\rangle=0$
for $u\in v^\bot$, and by Fubini's theorem that
 $\langle c(\widetilde{K}),v\rangle=\langle c(K'),v\rangle$,
which in turn yield by (\ref{K'c}) and $\V(\widetilde{K})=\V(K')=1$ that
\begin{equation}
\label{tildeKc}
 c(\widetilde{K})=c(K')=\eta v
 \mbox{ \ for $\eta\geq 0$, with $\eta=0$ if and only if $K'=K$}.
\end{equation}
We conclude by (\ref{K'K}) and (\ref{tildeKc}) that
\begin{equation}
\label{tildeK}
\V(\Xi_K)\geq \V(\Xi_{\widetilde{K}-c(\widetilde{K})})
\mbox{ \ with equality if and only if $K'=K$}.
\end{equation}

Finally we compare $\widetilde{K}$ to the % right
cylinder $Z$
over the $(n-1)$-ball $H\cap \widetilde{K}$, where $\V(Z)=\V(\widetilde{K})=1$
and $Z$ and $K$ lie on the same side of $H$. We deduce from the  
 rotational symmetry of $Z$  that $\langle c(Z),u\rangle=0$
for $u\in v^\bot$. On the other hand,  the rotational symmetry
of  $\widetilde{K}$ and $\widetilde{K}|v^\bot=(H\cap  \widetilde{K})-\lambda v$
yield that
$$
\langle x,v\rangle>-h_Z(-v)>\langle y,v\rangle
\mbox{ \ for all $x\in {\rm int}Z\backslash \widetilde{K}$
and $y\in \widetilde{K}\backslash Z$}.
$$
Therefore, 
$$
 c(Z)=\tau v
 \mbox{ \ for $\tau\geq\eta$, with $\tau=\eta$ if and only if $Z=\widetilde{K}$}.
$$
We conclude by (\ref{tildeKc}) and (\ref{tildeK}) that
$$
\V(\Xi_K)\geq \V(\Xi_{Z-c(Z)})=1/(2n)
\mbox{ \ with equality iff $K'=K$ and  $Z=\widetilde{K}$}.
$$
In turn, we get Proposition \ref{hemi-sphere}.
\end{proof}

\section{Some properties of the symmetric volume distance\label{prop_vol_distance}}

First we show that the distance $\delta_{\rm hom}$ can be estimated in terms of $\delta_{\rm vol}$.
%START
These types of estimates have been around, only we were not able to locate them in the form we need.
%END

\begin{lemma}
\label{centroidhom}
Let  $K\in\Kn$ with $c(K)=o$.
\begin{enumerate}
\item[{(i)}] If $Q\subset K$ is a convex body with $\V(K\backslash Q)\leq t\,\V(K)$ for $t\in(0,\frac1e)$, then
$(1-(et)^{1/n})K\subset Q$.
\item[{(ii)}] If $Q$ is a convex body with $\V(K\Delta Q)\leq t\V(K)$ for $t\in(0,\frac1{4^ne})$, then
$(1-(et)^{1/n})K\subset Q\subset (1+4(et)^{1/n})K$.
\end{enumerate}
\end{lemma}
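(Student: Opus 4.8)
The plan is to prove the two statements of Lemma~\ref{centroidhom} in order, deriving (ii) from (i) together with a reverse inclusion argument based on the centroid condition.

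\textbf{Proof of (i).}
First I would set $Q'=(1-s)K$ for the scaling factor $s=(et)^{1/n}\in(0,1)$ (note $s<1$ since $t<1/e$), and argue by contradiction: suppose $Q'\not\subset Q$, so there is a point $p\in (1-s)K\setminus Q$. Since $o\in\inte K$ and $K$ is convex, the homothety centered at $o$ with ratio $1-s$ maps $K$ into $K$, and in fact maps a neighbourhood of $p/(1-s)\in K$ into $K$; more precisely the cone $[p, (1-s)K]$ — or better, a suitable ``cap'' region — lies in $K\setminus Q$. The cleanest version: since $p\notin Q$ and $Q$ is convex and closed, separate $p$ from $Q$ by a hyperplane; then the part of $K$ cut off on the $p$-side of a parallel hyperplane through $p$ is disjoint from $Q$, so $\V(K\setminus Q)$ is at least the volume of that cap. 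To get a clean lower bound on the cap volume, I would instead use the homothety directly: the set $p + s\,K' $ where $K'$ is the reflection or a shrunk copy... Actually the standard trick is: $\conv(\{p\}\cup (1-s)K)\subseteq K$, this convex hull contains a translate of $s\,K$ scaled appropriately — more concretely $\conv(\{p\}\cup(1-s)K) \supseteq p + \text{(something of volume} \gtrsim s\,\V(K))$. The sharpest elementary estimate: $\conv(\{p\}\cup (1-s)K)$ has volume at least $\V((1-s)K)$ plus the volume of the ``tent'' over $p$, and since $p$ has distance from $(1-s)K$... I would look up / reconstruct the bound $\V(K\setminus Q) \geq \V(K)\cdot \tfrac{s^n}{e}$ via: $K \supseteq \conv(\{p\}\cup (1-s)K)$ and this convex hull contains $p+s K$ up to a set of the right size — the honest inequality one needs is that $\conv(\{p\}\cup(1-s)K)\setminus(1-s)K$ contains a translate of $\tfrac{s}{1-s}\cdot((1-s)K)\cap(\text{half-space})$, giving volume $\geq \tfrac{1}{n}\cdot\tfrac{s}{1}\cdot\V((1-s)K)\cdot(\text{const})$. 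Matching the stated bound $1-(et)^{1/n}$, i.e. $t \geq s^n/e$: since $\V(K\setminus Q)\le t\V(K)$ we need $s^n/e \le t$, i.e. $s\le (et)^{1/n}$, which is exactly the claim once the cap estimate $\V(K\setminus Q)\geq \V(K)\, s^n/e$ is established. The factor $1/e$ comes from $\max_{s} s^n(1-s)^{?}$ type optimization or from $(1-s/n)^n\geq$ ...; I expect the precise constant to fall out of comparing $p+ s(1-s)^{-1}((1-s)K - q_0)$ for $q_0$ the point of $(1-s)K$ nearest $p$.

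\textbf{Proof of (ii).}
Given $\V(K\Delta Q)\le t\,\V(K)$ with $t<1/(4^ne)$, first note $\V(K\setminus Q)\le \V(K\Delta Q)\le t\V(K)$, so part (i) immediately gives $(1-(et)^{1/n})K\subseteq Q$. For the reverse inclusion $Q\subseteq (1+4(et)^{1/n})K$: I would apply part (i) in reversed roles, but $Q$ need not have centroid at the origin, so some care is needed. The idea: from $\V(Q\setminus K)\le t\V(K)$ and $\V(Q)\le \V(K\cap Q)+\V(Q\setminus K)\le \V(K)(1+t)$, together with $\V(Q)\ge \V(K)-\V(K\setminus Q)\ge \V(K)(1-t)$, we control $\V(Q)\asymp\V(K)$. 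Now $Q\cap K$ is a convex body containing $(1-(et)^{1/n})K$, hence contains a ball around $o$ of radius comparable to that of the inball of $K$; using this interior point of $Q$ one can run the argument of (i) with $Q\cap K$ playing the role of the ``known'' body inside $Q$: $\V(Q\setminus(Q\cap K)) = \V(Q\setminus K)\le t\V(K)\le \tfrac{t}{1-t}\V(Q\cap K)\cdot(\text{const})$, and a point $p\in Q\setminus (1+4(et)^{1/n})K$ would force, via the convex hull $\conv(\{p\}\cup (Q\cap K))\subseteq Q$ and the inclusion $(1-(et)^{1/n})K\subseteq Q\cap K$, a cut-off cap of volume exceeding $t\V(K)$, a contradiction. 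The constant $4$ (versus $1$) and the stronger hypothesis $t<1/(4^ne)$ are precisely what is needed to absorb the loss from $Q\cap K$ being only a $(1-(et)^{1/n})$-copy of $K$ rather than $K$ itself, and from $o$ not being the centroid of $Q$.

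\textbf{Main obstacle.}
The technical heart is the cap-volume estimate in (i): showing that if $p\in(1-s)K\setminus Q$ then $\V(K\setminus Q)\ge \tfrac{s^n}{e}\V(K)$, with the sharp constant $1/e$. This is where one must be careful to get the exponent $1/n$ on $t$ and the explicit $e$, rather than a dimension-dependent constant. The right tool is the observation $\conv(\{p\}\cup(1-s)K)\subseteq K$ combined with a volume computation for such a ``cone-with-convex-base'' region; optimizing the resulting expression in $s$ (or directly estimating) produces the $1/e$. Passing to (ii), the only real subtlety is that one cannot invoke (i) verbatim for $Q$ because $c(Q)\ne o$ in general, so one works with $Q\cap K$, which does contain a controlled dilate of $K$ around $o$ by the already-proven first inclusion — this is the step that is easy to botch and where the constant $4$ and the $4^n$ in the hypothesis are spent.
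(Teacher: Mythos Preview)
Your skeleton is correct --- argue by contradiction, separate the offending point $p$ from $Q$ by a hyperplane, and bound $\V(K\setminus Q)$ from below by the resulting cap of $K$ --- but you have not identified the one ingredient that makes the cap estimate clean and delivers the sharp constant $1/e$. That ingredient is Gr\"unbaum's inequality: if $M\in\Kn$ and $H^+$ is a closed half-space containing $c(M)$, then $\V(M\cap H^+)\ge \V(M)/e$. None of your suggested routes (cone-over-convex-base volume, $\conv(\{p\}\cup(1-s)K)$, optimization of $s^n(1-s)^?$) will produce $1/e$; at best they give a dimension-dependent constant, because they never use the hypothesis $c(K)=o$ beyond $o\in\inte K$. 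The factor $1/e$ is a \emph{centroid} phenomenon, and you must invoke it explicitly.

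Here is the step you are missing. If $p\in(1-s)K$, write $p=(1-s)y$ with $y\in K$; then $p+sK=(1-s)y+sK\subset K$ by convexity, and since $c(K)=o$ we have $c(p+sK)=p$. Now let $H^+$ be a supporting half-space of $Q$ with $p$ on its bounding hyperplane and $H^+\cap\inte Q=\emptyset$. Gr\"unbaum applied to $M=p+sK$ gives
\[
\V(K\setminus Q)\ \ge\ \V(H^+\cap K)\ \ge\ \V\big(H^+\cap(p+sK)\big)\ \ge\ \frac{s^n}{e}\,\V(K),
\]
forcing $t\ge s^n/e$, i.e.\ $s\le(et)^{1/n}$. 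This is exactly the paper's argument (the paper phrases it via the maximal $\lambda$ with $\lambda K\subset Q$, but it is the same computation).

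For (ii) your plan is in the right spirit, but again the missing piece is Gr\"unbaum. The paper's execution is cleaner than working through $Q\cap K$: let $\mu>1$ be minimal with $Q\subset\mu K$, take $z\in\partial Q\cap\partial(\mu K)$, and set $w=z/\mu\in K$. From $\lambda K\subset Q$ (with $\lambda=1-(et)^{1/n}$) one checks $w+\tfrac{\lambda(\mu-1)}{\mu}K\subset \tfrac1\mu z+\tfrac{\mu-1}{\mu}Q\subset Q$, and $w$ is its centroid. A half-space $H^+$ through $w$ disjoint from $\inte K$ then satisfies, by Gr\"unbaum,
\[
t\,\V(K)\ \ge\ \V(H^+\cap Q)\ \ge\ \frac{\lambda^n(\mu-1)^n}{\mu^n e}\,\V(K),
\]
and the hypothesis $t<1/(4^ne)$ gives $\lambda>\tfrac12$ and $2(et)^{1/n}<\tfrac12$, whence $\mu<1+4(et)^{1/n}$. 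Your diagnosis of where the constants $4$ and $4^n$ are spent is accurate; what was missing was the mechanism that converts a boundary touching point into a controlled cap via the centroid.
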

\begin{proof} The main tool is the following result due to B. Gr\"unbaum \cite{Gru60}. If $M\in\Kn$,
and $H^+$ is a half space containing $c(M)$, then
\begin{equation}
\label{Grun}
\V(M\cap H^+)\geq \V(M)/e.
\end{equation}

To prove (i), let $\lambda=0$ if $o\not\in \inte Q$, and let $\lambda>0$ be maximal
with the property that $\lambda K\subset Q$ otherwise. In addition, let
$x=o$  if $o\not\in \inte Q$, and let $x$ be a common boundary point of $Q$ and $\lambda K$ otherwise.
Therefore, there exists a half space $H^+_1$ such that
$x$ lies on its boundary, and $H^+_1\cap \inte Q=\emptyset$. Now there exists a $y\in K$ such that
$x=\lambda y$, and hence $x$ is the centroid of $x+(1-\lambda)K=\lambda y+(1-\lambda)K\subset K$.
It follows from (\ref{Grun})  that
$$
t\V(K)\geq \V(H^+_1\cap K)\geq \V(H^+_1\cap (x+(1-\lambda)K))\geq
\V((1-\lambda)K)/e,
$$
and thus $t\geq \frac{(1-\lambda)^n}e$. 

To prove (ii), we observe that $\lambda K\subset Q$ for $\lambda=1-(et)^{1/n}$
by (i). We may assume that $Q\backslash  K\neq \emptyset$, and
let $\mu>1$ be minimal with the property that $Q\subset \mu K$. For a common boundary point $z$
of $Q$ and $\mu K$, let $w\in K$ such that $z= \mu w$. In particular, 
$w$ is the centroid of 
$$
w+\frac{\lambda(\mu-1)}{\mu}\,K\subset \frac{1}{\mu}\,z+\frac{\mu-1}{\mu}\,Q\subset Q.
$$
In addition there exists a half space $H^+_2$ such that
$w$ lies on its boundary, and $H^+_2\cap \inte K=\emptyset$. We
deduce again from (\ref{Grun}) that
$$
t\V(K)\geq \V(H^+_2\cap Q)\geq V\left(H^+_2\cap\left (w+\frac{\lambda(\mu-1)}{\mu}\,K\right)\right)\geq
\frac{\lambda^n(\mu-1)^n}{\mu^ne}\,\V(K).
$$
Now $t<\frac1{4^ne}$ yields that $\lambda>\frac12$ and $2(e\,t)^{1/n}<\frac12$, which in turn implies that
$\mu\leq (1-2(et)^{1/n})^{-1}<1+4(et)^{1/n}$. 
\end{proof}

\begin{coro}
\label{deltahomvol}
Let  $K, Q\in\Kn$. Then 
\begin{align*}
\delta_{\rm hom}(K,Q)&\leq 12\,\delta_{\rm vol}(K,Q)^{1/n}& \mbox{if $\delta_{\rm vol}(K,Q)<\frac1{4^ne}$,}\\
\delta_{\rm vol}(K,Q)&\leq  3n\,\delta_{\rm hom}(K,Q)&\mbox{if $\delta_{\rm hom}(K,Q)<\frac1{2n}$.}
\end{align*}
\end{coro}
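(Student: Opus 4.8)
The plan is to translate the two one-sided containments/volume bounds into the definitions of $\delta_{\rm hom}$ and $\delta_{\rm vol}$, being careful about the normalizations (both metrics compare the centered, volume-rescaled bodies $K'=K-c(K)$ and $Q'=Q-c(Q)$). Since $\delta_{\rm hom}$ and $\delta_{\rm vol}$ are invariant under translating either body to have centroid at the origin and under simultaneous scaling, I may assume at the outset that $c(K)=c(Q)=o$; for $\delta_{\rm vol}$ I additionally rescale so that $\V(K)=\V(Q)=1$, so that in the notation of the definition $t=1$ and $\delta_{\rm vol}(K,Q)=\V(K\Delta Q)$.

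For the first inequality, set $s=\delta_{\rm vol}(K,Q)<\tfrac1{4^ne}$. Then $\V(K\Delta Q)=s\,\V(K)$, so in particular $\V(K\setminus Q)\le s\,\V(K)$ and $\V(Q\setminus K)\le s\,\V(K)$. I would like to apply Lemma~\ref{centroidhom}(ii), but that lemma is stated for $Q\subset K$ only in part (i) and for a general $Q$ in part (ii); part (ii) gives exactly $(1-(es)^{1/n})K\subset Q\subset(1+4(es)^{1/n})K$. Symmetrically, applying the same lemma with the roles of $K$ and $Q$ interchanged — here one must check that $\V(Q)$ and $\V(K)$ are comparable, which follows since $|\V(K)-\V(Q)|\le\V(K\Delta Q)=s\,\V(K)$ and $s$ is small — yields a containment of the form $(1-c_1(s))Q\subset K\subset(1+c_2(s))Q$ with $c_1(s),c_2(s)=O(s^{1/n})$. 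Feeding these into the definition of $\delta_{\rm hom}$ (take $t$ to be an appropriate ratio of the dilation factors) gives $\delta_{\rm hom}(K,Q)=\ln(1+O(s^{1/n}))\le O(s^{1/n})$, and a careful bookkeeping of the constants — using $\ln(1+x)\le x$, the explicit constants $1$ and $4$ from Lemma~\ref{centroidhom}(ii), and the smallness of $s$ to absorb lower-order terms — produces the clean bound $12\,s^{1/n}=12\,\delta_{\rm vol}(K,Q)^{1/n}$.

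For the second inequality, set $\lambda=\delta_{\rm hom}(K,Q)<\tfrac1{2n}$. By definition there is $t>0$ with $Q'\subset t\,K'\subset\mathrm{e}^\lambda Q'$, and after the normalization $c(K)=c(Q)=o$ this reads $Q\subset tK\subset\mathrm{e}^\lambda Q$. Taking volumes gives $t^n\V(K)\le\mathrm{e}^{n\lambda}\V(Q)$ and $\V(Q)\le t^n\V(K)$, so $tK$ and $Q$ differ multiplicatively by at most $\mathrm{e}^\lambda$ in every direction. Then $tK\setminus Q\subset\mathrm{e}^\lambda Q\setminus Q$ and $Q\setminus tK\subset tK\setminus\mathrm{e}^{-\lambda}(tK)$, so $\V\big((tK)\Delta Q\big)\le(\mathrm{e}^{n\lambda}-1)\min\{\V(Q),\V(tK)\}\le(\mathrm{e}^{n\lambda}-1)\V(Q)$. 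Here I should match the definition of $\delta_{\rm vol}$, where the dilation is forced to be $\V(Q)^{1/n}/\V(K)^{1/n}$ rather than our $t$; since $t$ already satisfies $\mathrm{e}^{-\lambda}\le t\,\V(K)^{1/n}\V(Q)^{-1/n}\le 1$ (up to swapping), replacing $t$ by the prescribed ratio only changes the symmetric difference by another factor of size $\mathrm{e}^{O(\lambda)}-1$. Using $\mathrm{e}^{n\lambda}-1\le n\lambda\,\mathrm{e}^{n\lambda}$ and $n\lambda<\tfrac12$ to bound $\mathrm{e}^{n\lambda}<\mathrm{e}^{1/2}<2$, and absorbing the auxiliary rescaling factor, one gets $\delta_{\rm vol}(K,Q)\le 3n\,\lambda=3n\,\delta_{\rm hom}(K,Q)$.

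The main obstacle I anticipate is purely bookkeeping: keeping the two normalizations (centering, and for $\delta_{\rm vol}$ also volume-rescaling) consistent between the definitions and Lemma~\ref{centroidhom}, and in particular handling the fact that $\delta_{\rm vol}$ pins down the dilation factor to be $\V(Q)^{1/n}/\V(K)^{1/n}$ while Lemma~\ref{centroidhom} and the symmetric-difference estimates naturally produce a nearby but not identical factor. Controlling this mismatch — showing the optimal dilation in each estimate is within $\mathrm{e}^{O(\delta^{1/n})}$ (resp. $\mathrm{e}^{O(\delta)}$) of the prescribed one — is what forces the slightly generous constants $12$ and $3n$, and is where one must be most careful to make sure the smallness hypotheses $\delta_{\rm vol}<\tfrac1{4^ne}$ and $\delta_{\rm hom}<\tfrac1{2n}$ are exactly strong enough to close the argument.
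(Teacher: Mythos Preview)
Your approach is correct and essentially the same as the paper's: normalize to $c(K)=c(Q)=o$ and $\V(K)=\V(Q)=1$, invoke Lemma~\ref{centroidhom} for the first bound, and convert the definitional containment into a symmetric-difference estimate for the second. The paper streamlines the second half by observing that the volume normalization forces $1\le t\le e^\lambda$, whence directly $e^{-\lambda}K\subset Q\subset e^\lambda K$ and $\V(K\Delta Q)\le e^{n\lambda}-e^{-n\lambda}<3n\lambda$; this bypasses your correction step for the mismatch between $t$ and the prescribed ratio, which as you describe it (triangle inequality between $(tK)\Delta Q$ and $K\Delta(tK)$) yields only $2(e^{n\lambda}-1)<4n\lambda$ rather than the claimed $3n\lambda$.
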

\begin{proof} We use that $1+s<e^s<1+2s$  and $1-s<e^{-s}<1-\frac{s}2$ if $s\in(0,1)$.

We may assume that $c(K)=c(Q)=o$, and $\V(K)=\V(Q)=1$. In particular, $\V(K\Delta Q)=\delta_{\rm vol}(K,Q)$,
and hence the estimates for the exponential function and Lemma~\ref{centroidhom} yield with $s=\delta_{\rm vol}(K,Q)$ that
$$
e^{-2e^{1/n}s^{1/n}}K\subset(1-(se)^{\frac1n})K\subset Q\cap K \subset Q.
$$
Using the analogous  formula $e^{-2e^{1/n}s^{1/n}} Q\subset K$, 
we conclude the first estimate.

 For the second estimate, let $t= \delta_{\rm hom}(K,Q)$. It follows that
$e^{-t}K\subset Q\subset e^tK$, thus $\V(K\Delta Q)\leq
e^{nt}-e^{-nt}<3nt$. 
\end{proof}

Our next goal is  Lemma~\ref{nocentroid} stating
that one does not need to insist on the common centroid in the definition of  $\delta_{\rm vol}$. We prepare the argument
by the following observation.

\begin{lemma}
\label{translate-stab}
Let $K\in\Kn$ and $x\in\R^n$. Then 
\begin{equation*}
\V(K\Delta (x+K))\leq
2n\|x\|_{K-K}\V(K).
\end{equation*}
\end{lemma}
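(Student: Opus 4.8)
The plan is to reduce the symmetric difference to a single piece and then exhibit a shrunken homothetic copy of $K$ sitting inside $K\cap(x+K)$.

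First I would dispose of the trivial cases. Since $K$ has non-empty interior, $K-K$ is an $o$-symmetric convex body with $o$ in its interior, so $\|\cdot\|_{K-K}$ is a genuine norm; write $\lambda=\|x\|_{K-K}$. If $\lambda=0$ then $x=o$ and there is nothing to prove; if $\lambda\ge 1$ the bound follows at once from $\V(K\Delta(x+K))\le \V(K)+\V(x+K)=2\V(K)\le 2n\lambda\,\V(K)$. So I may assume $0<\lambda<1$, and then $x\in\lambda(K-K)$, i.e.\ $x=\lambda(a-b)$ for suitable $a,b\in K$.

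Next I would note that, because $\V(x+K)=\V(K)$, the two ``halves'' $K\setminus(x+K)$ and $(x+K)\setminus K$ of the symmetric difference have equal volume, so
\[
\V(K\Delta(x+K))=2\bigl(\V(K)-\V(K\cap(x+K))\bigr),
\]
and it suffices to prove $\V(K\cap(x+K))\ge(1-\lambda)^n\V(K)$. The key observation is the inclusion
\[
(1-\lambda)K+\lambda a\ \subseteq\ K\cap(x+K).
\]
Indeed, for $z\in K$ the point $w=(1-\lambda)z+\lambda a$ is a convex combination of points of $K$, hence $w\in K$; moreover $w-x=(1-\lambda)z+\lambda a-\lambda(a-b)=(1-\lambda)z+\lambda b\in K$, so $w\in x+K$. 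Since $(1-\lambda)K+\lambda a$ is a homothetic copy of $K$ with ratio $1-\lambda\in(0,1)$, its volume is $(1-\lambda)^n\V(K)$, which gives the desired lower bound on $\V(K\cap(x+K))$.

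Combining the two displays yields $\V(K\Delta(x+K))\le 2\bigl(1-(1-\lambda)^n\bigr)\V(K)\le 2n\lambda\,\V(K)$, where the last inequality is Bernoulli's estimate $(1-\lambda)^n\ge 1-n\lambda$ for $\lambda\in[0,1]$; this is exactly $2n\|x\|_{K-K}\V(K)$. There is essentially no obstacle here: the only points that need care are the reduction to $x=\lambda(a-b)$ with $a,b\in K$ and the verification that the shifted homothet lands simultaneously in $K$ and in $x+K$. (One could alternatively slice $K$ by lines parallel to $x$ and invoke the one-dimensional case via Fubini, but the homothety argument above is shorter and avoids that bookkeeping.)
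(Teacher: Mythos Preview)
Your proof is correct, but it takes a genuinely different route from the paper's. The paper slices in the direction of $x$: writing $x=\lambda(y-z)$ with $y,z\in K$ and $\lambda=\|x\|_{K-K}$, it uses Steiner symmetrization (equivalently, the cone over the shadow) to get $\V(K)\ge \tfrac{\|y-z\|}{n}\,\mathcal H_{n-1}(K|x^\bot)$, and then Fubini to get $\V(K\Delta(x+K))\le 2\|x\|\,\mathcal H_{n-1}(K|x^\bot)$; combining these yields the lemma in one stroke for all $x\ne o$. Your argument instead finds a shrunken translate $(1-\lambda)K+\lambda a$ inside $K\cap(x+K)$ and finishes with Bernoulli's inequality. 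Your approach is more elementary---it uses nothing beyond convexity and the binomial estimate---and in fact yields the sharper intermediate bound $\V(K\Delta(x+K))\le 2\bigl(1-(1-\lambda)^n\bigr)\V(K)$ for $\lambda<1$; the paper's approach avoids the case split on $\lambda$ and is the Fubini-type argument you alluded to in your closing remark.
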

\begin{proof} We may assume that $x\neq o$.  Let $y,z\in K$ such that $x=\|x\|_{K-K}(y-z)$, and hence
$$
\|x\|_{K-K}=\|x\|/\|y-z\|.
$$
Applying Steiner symmetrization with respect to the hyperplane $x^\bot$ shows that
$$
\V(K)\geq \frac{\|y-z\|}n\,{\mathcal H}_{n-1}(K|x^\bot).
$$
We deduce by Fubini's theorem that
$$
\V(K\Delta (x+K))\leq 2\|x\|{\mathcal H}_{n-1}(K|x^\bot)\leq 2n\|x\|_{K-K}\V(K). 
$$
\end{proof}

\begin{lemma}
\label{nocentroid}
Let $K,Q\in\Kn$ with $c(K)=o$ and $\V(K\Delta Q)\leq t\V(K)$
for $t\in(0,\frac1{4^ne})$. Then 
\begin{equation*}
 \|c(Q)\|_{K-K}\leq 4n t  \quad\text{ and }\quad \delta_{\rm vol}(K,Q)\leq 9n^2 t. 
\end{equation*}
\end{lemma}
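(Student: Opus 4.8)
The plan is to first control the centroid $c(Q)$ and then use Lemma~\ref{translate-stab} to pass from $\delta_{\rm vol}$ with a common centroid to the ``raw'' symmetric difference $\V(K\Delta Q)$ that we are given.  For the first bound, write $c(Q) = \V(Q)^{-1}\int_Q x\,d\ha_n(x)$ and compare this to $c(K)=o$: since $K$ and $Q$ agree outside a set of measure at most $t\,\V(K)$, the difference $\V(Q)c(Q) - \V(K)c(K) = \int_Q x\,d\ha_n - \int_K x\,d\ha_n$ is an integral of $x$ over $(Q\setminus K)\cup(K\setminus Q)$, whose $(K-K)$-norm-weighted mass is small.  Concretely, every point of $K$ (hence, after the first estimate in Lemma~\ref{centroidhom}, a definite fraction of $Q$, and by the $\mu K$-containment a bounded multiple of $K$) has $\|x\|_{K-K}$ bounded by an absolute constant, so $\|\int_{Q\Delta K} x\,d\ha_n\|_{K-K} \leq c_n\,\V(Q\Delta K) \leq c_n t\,\V(K)$; dividing by $\V(Q)\geq (1-t)\V(K)$ and tracking constants gives $\|c(Q)\|_{K-K}\leq 4nt$.  (The main bookkeeping here is that I need $Q\subset \mu K$ with $\mu$ close to $1$, which is exactly part (ii) of Lemma~\ref{centroidhom}, applicable because $t<\tfrac{1}{4^ne}$.)

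For the second bound, recall that $\delta_{\rm vol}(K,Q) = \V(K'\Delta t_0 Q')/\V(K)$ where $K'=K-c(K)=K$, $Q'=Q-c(Q)$, and $t_0 = (\V(K)/\V(Q))^{1/n}$.  I would estimate this via the triangle inequality for symmetric difference, decomposing
$$
K\,\Delta\, (t_0 Q') \;\subseteq\; \big(K\,\Delta\, Q\big)\;\cup\;\big(Q\,\Delta\,(Q-c(Q))\big)\;\cup\;\big((Q-c(Q))\,\Delta\, t_0(Q-c(Q))\big).
$$
The first piece has measure $\leq t\,\V(K)$ by hypothesis.  The second piece is a pure translation by $-c(Q)$, so Lemma~\ref{translate-stab} bounds it by $2n\|c(Q)\|_{Q-Q}\V(Q)$; since $Q$ and $K$ are $(1\pm c_n t^{1/n})$-close by Lemma~\ref{centroidhom}(ii), the norms $\|\cdot\|_{Q-Q}$ and $\|\cdot\|_{K-K}$ are comparable up to a constant factor, and the first bound $\|c(Q)\|_{K-K}\leq 4nt$ kicks in.  The third piece is a dilation by the factor $t_0$, which differs from $1$ by $O(t)$ because $\V(Q)\in[(1-t)\V(K),(1+ct^{1/n})^n\V(K)]$; a dilation $x\mapsto t_0 x$ changes volume-$1$-normalized bodies' symmetric difference by $O(n|t_0-1|)\V(Q)$.

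Collecting the three contributions, each is $O(nt)\V(K)$ (the second and third after absorbing the comparability constants), and summing with explicit constants yields $\delta_{\rm vol}(K,Q)\leq 9n^2 t$.  The step I expect to require the most care is the second one — specifically, making sure the dilation factor $t_0$ is genuinely $1+O(t)$ and not merely $1+O(t^{1/n})$: the lower bound $\V(Q)\geq (1-t)\V(K)$ is immediate, but the upper bound on $\V(Q)$ must come from $\V(Q\setminus K)\leq t\V(K)$ directly (giving $\V(Q)\leq (1+t)\V(K)$) rather than from the cruder containment $Q\subset(1+4(et)^{1/n})K$, so that the final exponent on $t$ stays linear.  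Everything else is routine monotonicity of $s\mapsto e^s$ on $(0,1)$ and constant-chasing of the type already done in Corollary~\ref{deltahomvol}.
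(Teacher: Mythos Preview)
Your proposal is correct and follows essentially the same two-step structure as the paper's proof: bound $c(Q)$ via the integral representation and the containment $Q\subset 2K$ from Lemma~\ref{centroidhom}(ii), then reach $\delta_{\rm vol}$ by a triangle inequality for symmetric differences using Lemma~\ref{translate-stab} for the translation piece and the elementary bound $|\V(Q)-\V(K)|\leq t\V(K)$ for the dilation piece.

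Two small differences are worth noting. First, the paper takes a detour through a L\"owner-ellipsoid normalization of $K-K$ so as to work with the Euclidean norm and then convert back via $\frac1n\|x\|\leq\|x\|_{K-K}\leq 2\|x\|$; your direct use of $\|x\|_{K-K}\leq 2$ for $x\in K\cup Q\subset 2K$ is cleaner and in fact yields the sharper bound $\|c(Q)\|_{K-K}\leq 4t$. Second, the paper translates $K$ to $K'=K+c(Q)$ rather than translating $Q$, which lets Lemma~\ref{translate-stab} be applied with $\|\cdot\|_{K-K}$ directly and sidesteps your ``$\|\cdot\|_{Q-Q}$ vs.\ $\|\cdot\|_{K-K}$'' comparison step; that step is harmless (the inclusion $(1-(et)^{1/n})K\subset Q$ gives the comparability you need), but the paper's choice is marginally tidier for constant-tracking.
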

\begin{proof} We may assume that $\V(K)=1$, and the minimal volume so
  called L\"owner ellipsoid $E$ containing $K-K$ is a ball (see, e.g.,
  \cite{Gruberbook}).
In particular, $n^{-1/2}E\subset K-K\subset E$, and the Brunn-Minkowski and Rogers-Shephard theorems yield that
 $2^n\leq \V(K-K)\leq{2n \choose n}$. Since the volume of a centrally convex body over the volume of its Loewner ellipsoid 
is at least $2^n/(n!\V(B^n))$   according to K. Ball \cite{Bal91}, we have
 $$
2^n\leq \V(E)\leq {2n \choose n} \frac{n!}{2^n}\, \V(B^n)<\sqrt{3}\cdot\frac{2^nn^n}{e^n}\, \V(B^n).
$$
It follows that 
\begin{equation}
\label{K-Knorm}
\mbox{$\frac{2}{\sqrt{e\pi}}\,B^n\subset K-K\subset n B^n$ \ and \ 
$\frac1n\,\|x\|\leq\|x\|_{K-K}\leq 2\|x\|$.}
\end{equation}
Therefore, to prove Lemma~\ref{nocentroid}, it is sufficient to verify the corresponding estimate for $\|c(Q)\|$.

 If $c(Q)=o$, then we are done, otherwise let $u=c(Q)/\|c(Q)\|$. We have $Q\subset 2K\subset 2n B^n$ by Lemma~\ref{centroidhom} and  (\ref{K-Knorm}), and $\V(Q)\geq 1-t$ implies
$\V(Q)^{-1}<2$. By \eqref{K-Knorm} we also have 
\begin{equation*}
\|c(Q)\|_{K-K}\leq 2\|c(Q)\|=2\V(Q)^{-1}\langle  u,c(Q)\rangle=2\V(Q)^{-1}\left\|\int_Q\langle u,x\rangle\,dx\right\|,
\end{equation*}
and since $c(K)=o$ we get 
%
%In particular, (\ref{K-Knorm}) yields 
\begin{equation}
\begin{split}
\|c(Q)\|_{K-K}&\leq 2\V(Q)^{-1}\left\|\int_Q\langle
  u,x\rangle\,dx\right\|\\ 
&= 2\V(Q)^{-1}\left\|\int_{Q\backslash K}\langle
  u,x\rangle\,dx-\int_{K\backslash Q}\langle u,x\rangle\,dx\right\|\\ 
&4\int_{K\Delta Q} |\langle u,x\rangle|\,dx
\leq 4n t.
\end{split}
 \label{c(Q)norm}
\end{equation}

% \begin{eqnarray}
% \nonumber
% \|c(Q)\|_{K-K}&\leq& 2\|c(Q)\|=2\V(Q)^{-1}\langle  u,c(Q)\rangle=2\V(Q)^{-1}\left\|\int_Q\langle u,x\rangle\,dx\right\|\\
% \nonumber
% &=&2\V(Q)^{-1}\left\|\int_{Q\backslash K}\langle u,x\rangle\,dx-\int_{K\backslash Q}\langle u,x\rangle\,dx\right\|\\
% \label{c(Q)norm}
% &\leq& 4\int_{K\Delta Q} |\langle u,x\rangle|\,dx
% \leq 4n t.
%\end{eqnarray}

Let $K'=K+c(Q)$, thus Lemma~\ref{translate-stab} and (\ref{c(Q)norm}) imply that
$\V(K\Delta K')\leq 8n^2t$. We observe that $Q'=c(Q)+\V(Q)^{-1/n}(Q-c(Q))$ satisfies $c(Q')=c(Q)$, 
$\V(Q')=1$, and $\V(Q'\Delta Q)\leq t$ by $1-t\leq \V(Q)\leq 1+t$
(cf.~Lemma \ref{centroidhom}).
Therefore 
$$
\delta_{\rm vol}(K,Q)=\V(K'\Delta Q')\leq \V(K'\Delta K) +\V(K\Delta Q)+\V(Q\Delta Q')<9n^2t.
$$
\end{proof}

\section{Some consequences of the stability  of the Brunn-Minkowski inequality\label{sec_BM}}

Concerning the Brunn-Minkowski theory, including the properties of mixed volumes, the main reference is 
R.~Schneider \cite{Sch93}. We use the Brunn-Minkowski theory in $L^\bot$ in the terminology of Theorem~\ref{Henk-Linke-stab},
whose dimension is $k=n-d$.
For $k,m\geq 1$, let 
$$
I^k_m=\{(i_1,\ldots,i_m):\, i_j\in\N,\;j=1,\ldots,m\mbox{ and }i_1+\ldots+i_m=k\}.
$$
For compact convex sets $C_1,\ldots,C_m$ in $\R^k$ and $(i_1,\ldots,i_m)\in I^k_m$, the non-negative mixed volumes 
$\V(C_1,i_1;\ldots;C_m,i_m)$ were defined by H. Minkowski in a way such that if $\alpha_1,\ldots,\alpha_m\geq 0$, then
\begin{equation}
\label{mixed-volume}
{\mathcal H}_k\left(\sum_{j=1}^m\alpha_j C_j\right)=\sum_{(i_1,\ldots,i_m)\in I^k_m}
\V(C_1,i_1;\ldots;C_m,i_m)\alpha_1^{i_1}\cdot\ldots \cdot \alpha_m^{i_m}.
\end{equation}
The mixed volume $\V(C_1,i_1;\ldots;C_m,i_m)$ actually depends only on the $C_j$ with $i_j>0$, does not depend on the
order how the pairs $C_j,i_j$ are indexed, and we frequently ignore the pairs $C_j,i_j$ with $i_j=0$.
We have  $\V(C_1,k)={\mathcal H}_k(C_1)$, and $\V(C_1,i_1;\ldots;C_m,i_m)>0$ if each $C_j$ is $k$-dimensional. It follows by the Alexandrov-Fenchel inequality that
\begin{equation}
\label{Alexandrov-Fenchel}
\V(C_1,i_1;\ldots;C_m,i_m)^k\geq \prod_{j=1}^m{\mathcal H}_k(C_j)^{i_j}.
\end{equation}
An important special case of (\ref{Alexandrov-Fenchel}) is the classical Minkowski inequality, which says
\begin{equation}
\label{Minkowski-inequality}
\V(C_1,1;C_2,k-1)^k\geq {\mathcal H}_k(C_1){\mathcal H}_k(C_2)^{k-1}.
\end{equation} 
Equality holds for $k$-dimensional $C_1$ and $C_2$ in the Minkowski inequality (\ref{Minkowski-inequality}) if and only if
$C_1$ and $C_2$ are homothetic. We remark that the equality conditions in the Alexandrov-Fenchel inequality
(\ref{Alexandrov-Fenchel}) are not yet clarified in general.

Now the Alexandrov-Fenchel inequality (\ref{Alexandrov-Fenchel}), and actually already the Minkowski inequality 
(\ref{Minkowski-inequality}) yields the classsical (general) Brunn-Minkowski theorem  stating that
if $C_1,\ldots,C_m$ are compact convex sets in $\R^k$, and
$\alpha_1,\ldots,\alpha_m\geq 0$, then (cf.~\eqref{eq:brunn_minkowski})
\begin{equation}
\label{BrunnMink}
{\mathcal H}_k\left(\sum_{j=1}^m\alpha_j C_j\right)^{1/k}\geq
\sum_{j=1}^m\alpha_i{\mathcal H}_k(C_i)^{1/k}.
\end{equation}
Equality holds for $k$-dimensional $C_1,\ldots,C_m$ and positive $\alpha_1,\ldots,\alpha_m$ in the Brunn-Minkowski inequality (\ref{BrunnMink}) if and only if
$C_1$ and $C_j$ are homothetic for $j=2,\ldots,m$.

We need the following stability version of the Minkowski inequality  (\ref{Minkowski-inequality}) due to
A. Figalli, F. Maggi and A. Pratelli \cite{FMP10}.
 If  $C_1,C_2$ are $k$-dimensional compact convex sets in $\R^k$, and
\begin{equation}
\label{Minkowski-stab-cond}
\V(C_1,1;C_2,k-1)^k\leq (1+\varepsilon) {\mathcal H}_k(C_1){\mathcal H}_k(C_2)^{k-1}
\end{equation}
for small $\varepsilon\geq 0$, then \cite{FMP10} proves that
\begin{equation}
\label{Minkowski-vol-stab}
\delta_{\rm vol}(C_1,C_2)\leq\tilde{\gamma}_v\varepsilon^{1/2} 
\end{equation}
where the explicit $\tilde{\gamma}_v>0$ depends only on the dimension $k$.

%\noindent{\bf Remark} 
We remark that here we only work out the estimate with respect 
to the symmetric volume distance $\delta_{\rm vol}$, and then just use
Corollary~\ref{deltahomvol} for $\delta_{\rm hom}$. Actually, V.I. Diskant \cite{Diskant} proved  that (\ref{Minkowski-stab-cond}) implies
\begin{equation}
\label{Minkowski-hom-stab}
\delta_{\rm hom}(C_1,C_2)\leq\tilde{\gamma}_h\varepsilon^{1/k}
\end{equation}
for an unknown $\tilde{\gamma}_h>0$ depending only on $k$.
We note that (\ref{Minkowski-vol-stab}) and Corollary~\ref{deltahomvol} readily yields a version of  (\ref{Minkowski-hom-stab})
with exponent $\frac1{2k}$ instead of $\frac1k$.\\

Combining the stability versions (\ref{Minkowski-vol-stab}) and (\ref{Minkowski-hom-stab}) with 
Lemma~\ref{translate-stab} and Lemma~\ref{nocentroid} leads to the following stability version of the Brunn-Minkowski inequality.

\begin{lemma}
\label{BrunnMinkowski-stab}
For any $k\geq 1$, $m\geq 2$ and $\omega\in(0,1]$, there exist positive $\varepsilon_0(k,m,\omega)$ and $\gamma(k,m,\omega)$ 
depending on $k$, $m$ and $\omega$ such that
if  $k$-dimensional compact convex sets $C_0,C_1,\ldots,C_m$  in $\R^k$, and $\alpha_1,\ldots,\alpha_m>0$
satisfy that  $\alpha_i/\alpha_j\geq\omega$  and 
${\mathcal H}_k(C_i)=V$ for $i,j=1,\ldots, m$, and
$$
\alpha_1C_1+\ldots+\alpha_mC_m\subset C_0
\mbox{ \ and \ }{\mathcal H}_k(C_0)\leq e^\varepsilon 
(\alpha_1+\ldots+\alpha_m)^kV
$$
for $\varepsilon\in(0,\varepsilon_0(k,m,\omega))$, then for  $i=1,\ldots, m$, we have
\begin{eqnarray*}
\delta_{\rm vol}(C_i,C_0)&\leq&\gamma(k,m,\omega)\varepsilon^{1/2},\\
\left\|c(C_0)-\sum_{i=1}^m\alpha_ic(C_i) \right\|_{C_0-C_0}&\leq &(\alpha_1+\ldots+\alpha_m)
\gamma(k,m,\omega)\varepsilon^{1/2}.
\end{eqnarray*}
\end{lemma}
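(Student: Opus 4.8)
The plan is to reduce the multi-set Brunn--Minkowski stability statement to the two-set Minkowski stability of Figalli--Maggi--Pratelli \eqref{Minkowski-vol-stab} (and Diskant's \eqref{Minkowski-hom-stab}), applied iteratively. First I would fix $i$ and compare $C_0$ to $C_i$ directly via the mixed volume $\V(C_0,1;C_i,k-1)$. The idea is that the inclusion $\sum_{j=1}^m\alpha_j C_j\subset C_0$ together with monotonicity and multilinearity of mixed volumes gives
$$
\V(C_0,1;C_i,k-1)\geq \V\Bigl(\sum_{j=1}^m\alpha_j C_j,\,1;\,C_i,k-1\Bigr)=\sum_{j=1}^m\alpha_j\,\V(C_j,1;C_i,k-1).
$$
By the Minkowski inequality \eqref{Minkowski-inequality} and ${\mathcal H}_k(C_j)={\mathcal H}_k(C_i)=V$ each term satisfies $\V(C_j,1;C_i,k-1)\geq V$, so the right-hand side is at least $(\alpha_1+\dots+\alpha_m)V$. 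On the other hand, from the Minkowski inequality applied to the pair $C_0,C_i$ and the hypothesis ${\mathcal H}_k(C_0)\leq e^\varepsilon(\alpha_1+\dots+\alpha_m)^kV$ one gets an upper bound $\V(C_0,1;C_i,k-1)^k\geq {\mathcal H}_k(C_0){\mathcal H}_k(C_i)^{k-1}$, which combined with the lower bound above pins $\V(C_0,1;C_i,k-1)$ between $(\alpha_1+\dots+\alpha_m)V$ and $e^{\varepsilon/k}(\alpha_1+\dots+\alpha_m)V$; hence, after normalizing, $C_0$ and $C_i$ satisfy \eqref{Minkowski-stab-cond} with $\varepsilon$ replaced by a constant (depending on $k,m,\omega$) times $\varepsilon$. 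Applying \eqref{Minkowski-vol-stab} then yields $\delta_{\rm vol}(C_i,C_0)\leq\gamma(k,m,\omega)\varepsilon^{1/2}$, which is the first assertion; the parameters $\omega$ and $m$ enter only through the normalization constant $(\alpha_1+\dots+\alpha_m)$ relative to a single $\alpha_i$, and $\varepsilon_0(k,m,\omega)$ is chosen small enough that the hypothesis of the FMP theorem is met.

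For the second (centroid) assertion I would use the now-established closeness of each $C_i$ to $C_0$. By Diskant's estimate \eqref{Minkowski-hom-stab} (or by \eqref{Minkowski-vol-stab} together with Corollary~\ref{deltahomvol}), after rescaling there are translation vectors $b_i$ and a common dilation factor making $t_iC_i+b_i$ squeezed between $C_0$ and $e^{\gamma\varepsilon^{1/(2k)}}C_0$; in particular $C_0$ and a suitable homothet of each $C_i$ are within Hausdorff-type distance $O(\varepsilon^{1/(2k)})$. The centroid is a Lipschitz functional on convex bodies with respect to $\delta_{\rm vol}$ on a fixed volume-and-inradius scale (this is exactly the content of Lemma~\ref{nocentroid} and Lemma~\ref{translate-stab}), so $\|c(C_0)-c(t_iC_i+b_i)\|_{C_0-C_0}=O(\varepsilon^{1/2})$ for each $i$. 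It remains to control $\sum_i\alpha_i c(C_i)$: writing $\sum_{j=1}^m\alpha_jC_j=:C'\subset C_0$, one has $c(C')=\frac{1}{\alpha_1+\dots+\alpha_m}\sum_j\alpha_j c(C_j)$ by additivity of centroids under Minkowski sums, while the volume bound ${\mathcal H}_k(C')\geq(\alpha_1+\dots+\alpha_m)^kV\geq e^{-\varepsilon}{\mathcal H}_k(C_0)$ forces $C'$ to fill up almost all of $C_0$, so $\V(C_0\setminus C')\leq(1-e^{-\varepsilon}){\mathcal H}_k(C_0)=O(\varepsilon){\mathcal H}_k(C_0)$, and hence $\delta_{\rm vol}(C',C_0)=O(\varepsilon)$. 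Feeding this into Lemma~\ref{nocentroid} gives $\|c(C')-c(C_0)\|_{C_0-C_0}=O(\varepsilon)$, which rearranges to $\|c(C_0)-\sum_i\alpha_i c(C_i)\|_{C_0-C_0}\leq(\alpha_1+\dots+\alpha_m)\gamma(k,m,\omega)\varepsilon^{1/2}$, as desired (absorbing the smaller-order $O(\varepsilon)$ term into $\varepsilon^{1/2}$).

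The step I expect to be the main obstacle is making the mixed-volume bookkeeping in the first paragraph uniform in $m$ and $\omega$: monotonicity of mixed volumes under inclusion is standard, but I must be careful that the normalization turning \emph{raw} closeness of $C_0$ to $C_i$ into the precise form \eqref{Minkowski-stab-cond} required by Figalli--Maggi--Pratelli does not secretly depend on the individual $C_j$'s, only on $k,m,\omega$. The ratio assumption $\alpha_i/\alpha_j\geq\omega$ is exactly what guarantees $(\alpha_1+\dots+\alpha_m)/\alpha_i\leq m/\omega$ is bounded, so all the $\varepsilon$-inflations are by constants controlled by $k,m,\omega$; once that is set up carefully, the rest is an application of the cited stability theorems plus the Lipschitz-type lemmas from Section~\ref{prop_vol_distance}. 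A secondary technical point is that all the bodies must be kept on a controlled volume-and-diameter scale (normalize ${\mathcal H}_k(C_0)$ to $1$, say) before invoking Lemma~\ref{nocentroid}, which requires the $C_i$ and $C_0$ to be genuinely $k$-dimensional — this is ensured by the hypothesis that each $C_i$ is $k$-dimensional and the volume bound, which prevents $C_0$ from degenerating.
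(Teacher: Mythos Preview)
Your first paragraph has a directional error that breaks the argument. You correctly obtain the \emph{lower} bound $\V(C_0,1;C_i,k-1)\geq(\sum_j\alpha_j)V$ from monotonicity, but to invoke the Figalli--Maggi--Pratelli stability \eqref{Minkowski-stab-cond} for the pair $(C_0,C_i)$ you need an \emph{upper} bound of the form $\V(C_0,1;C_i,k-1)^k\leq(1+\varepsilon')\,{\mathcal H}_k(C_0)\,{\mathcal H}_k(C_i)^{k-1}$, and nothing you wrote supplies one: the Minkowski inequality \eqref{Minkowski-inequality} points the other way (indeed your sentence ``one gets an upper bound $\V(C_0,1;C_i,k-1)^k\geq\ldots$'' is self-contradictory). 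The inclusion $\sum_j\alpha_jC_j\subset C_0$ together with the volume hypothesis controls ${\mathcal H}_k(C_0)$ from both sides, but not the mixed volume of $C_0$ against $C_i$. The paper circumvents this by comparing two \emph{summands} $C_i,C_j$ rather than a summand to $C_0$: one expands ${\mathcal H}_k(\sum_\ell\alpha_\ell C_\ell)$ via \eqref{mixed-volume} and applies the Alexandrov--Fenchel inequality \eqref{Alexandrov-Fenchel} to every term \emph{except} $k\alpha_i\alpha_j^{k-1}\V(C_i,1;C_j,k-1)$, which leaves exactly the required upper bound on that single mixed volume; then \eqref{Minkowski-vol-stab} yields $\delta_{\rm vol}(C_i,C_j)\leq\bar\gamma\varepsilon^{1/2}$. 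The ratio hypothesis $\alpha_i/\alpha_j\geq\omega$ enters here through $(\sum_\ell\alpha_\ell)^k\leq(m/\omega)^k\alpha_i\alpha_j^{k-1}$. Passage to $C_0$ is done only afterwards, by normalising $c(C_i)=o$, taking $M=\bigcap_iC_i$, and using $M\subset C_0$ with ${\mathcal H}_k(C_0)\leq e^\varepsilon$ to bound ${\mathcal H}_k(C_0\Delta C_i)$.

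Your second paragraph contains a separate error: the centroid is \emph{not} Minkowski-additive, so $c\bigl(\sum_j\alpha_jC_j\bigr)\neq\sum_j\alpha_j\,c(C_j)$ in general. (A quick planar counterexample: take $A$ the triangle with vertices $(0,0),(3,0),(0,3)$ and $B$ a slight thickening of the segment $\{0\}\times[0,1]$; one computes $c(A+B)\approx(1.2,1.4)$ whereas $c(A)+c(B)=(1,1.5)$.) Hence the step ``$c(C')=\frac{1}{\sum_j\alpha_j}\sum_j\alpha_jc(C_j)$ by additivity'' fails, and the otherwise valid observation ${\mathcal H}_k(C_0\setminus C')=O(\varepsilon){\mathcal H}_k(C_0)$ does not by itself control $c(C_0)-\sum_j\alpha_jc(C_j)$. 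In the paper the centroid estimate is obtained only after the pairwise closeness $\delta_{\rm vol}(C_i,C_j)=O(\varepsilon^{1/2})$ is in hand: translating so that $c(C_i)=o$ for every $i$ reduces the claim to $\|c(C_0)\|_{C_0-C_0}=O(\varepsilon^{1/2})$, which then follows from Lemma~\ref{nocentroid} once ${\mathcal H}_k(C_0\Delta C_i)=O(\varepsilon^{1/2})$ has been established via the intersection argument.
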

\begin{proof} Since ${\mathcal H}_k(\alpha_1C_1+\ldots+\alpha_mC_m)\geq
(\alpha_1+\ldots+\alpha_m)^kV$ according to the Brunn-Minkowski inequality,
we may assume that
$\alpha_1C_1+\ldots+\alpha_mC_m=C_0$  by  Lemma~\ref{nocentroid}.
For  $1\leq i<j\leq m$, we apply the Alexandrov-Fenchel inequality (\ref{Alexandrov-Fenchel})
to each term in  (\ref{mixed-volume}) except for $ k\alpha_i\alpha_j^{k-1}\V(C_i,1;C_j,k-1)$ and deduce that
$$
k\alpha_i\alpha_j^{k-1}\V(C_i,1;C_j,k-1)\leq k\alpha_i\alpha_j^{k-1}V+(e^\varepsilon-1)(\alpha_1+\ldots+\alpha_m)^kV.
$$
Here $(\alpha_1+\ldots+\alpha_m)^k\leq (\frac{m}{\omega})^k\alpha_i\alpha_j^{k-1}$, and hence 
$$
\V(C_i,1;C_j,k-1)\leq\left(1+ \frac2k\left(\frac{m}{\omega}\right)^k \varepsilon\right) V.
$$
Thus (\ref{Minkowski-vol-stab}) yield
\begin{equation}
\label{vol-est}
\delta_{\rm vol}(C_i,C_j)\leq\bar{\gamma}(k,m,\omega)\varepsilon^{1/2}
\end{equation}
for $\bar{\gamma}(k,m,\omega)$ 
depending only on $k$, $m$ and $\omega$. To compare to $C_0$, we may assume that $V=1$,
$\alpha_1+\ldots+\alpha_m=1$ and
 $c(C_i)=o$ for $i=1,\ldots,m$. Let $M=C_1\cap\ldots \cap C_m$.

It follows from (\ref{vol-est}) that
$$
{\mathcal H}_k(C_i\backslash M)\leq  m\cdot \bar{\gamma}(k,m,\omega)\varepsilon^{1/2},\;i=1,\ldots,m,
$$
and hence ${\mathcal H}_k(M)\geq 1-m\cdot \bar{\gamma}(k,m,\omega)\varepsilon^{1/2}$.
Since $M\subset C_i$ for $i=1,\ldots,m$ yields $M\subset C_0=\sum_{i=1}^m\alpha_iC_i$, and ${\mathcal H}_k(C_0)\leq e^\varepsilon$, we deduce
$$
{\mathcal H}_k(C_0\Delta C_i)\leq 2\bar{\gamma}(k,m,\omega)\varepsilon^{1/2},\;i=1,\ldots,m.
$$
Therefore Lemma~\ref{translate-stab} and  Lemma~\ref{nocentroid}  imply
the required estimates for $\delta_{\rm vol}(C_i,C_0)$ and $c(C_0)$. 
\end{proof}

To prove the next Proposition~\ref{fBrunnMinkowski-stab}, we need the following observation.

\begin{lemma}
\label{simplex-inK}
If $M$ is a convex body in $\R^d$ such that $-M\subset \eta M$ for $\eta\geq 1$, then there exists an $d$-simplex
$T\subset M$ whose centroid is the origin such that $M\subset \eta d^{3/2}T$.
\end{lemma}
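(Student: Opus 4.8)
The plan is to reduce the statement to the origin-symmetric case and then invoke John's theorem together with the elementary rounding of a ball by a regular simplex.

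\textbf{Reduction to a symmetric body.} First I would record two easy consequences of $-M\subseteq\eta M$. A supporting hyperplane argument shows $o\in\inte M$ (otherwise a supporting hyperplane at $o$ traps $-M$, hence $M$, in it, contradicting that $M$ is $d$-dimensional); since in addition $\eta\geq 1$ and $o\in M$, this gives $M\subseteq\eta M$. Rewriting the hypothesis gives $M\subseteq -\eta M$. Hence $M\subseteq\eta M\cap(-\eta M)=\eta M_0$ for the origin-symmetric convex body $M_0:=M\cap(-M)$, which has $o$ in its interior and satisfies $\tfrac1\eta M\subseteq M_0\subseteq M$. Consequently it suffices to produce a $d$-simplex $T$ with $c(T)=o$ such that $T\subseteq M_0\subseteq d^{3/2}T$: this yields at once $T\subseteq M$ and $M\subseteq\eta M_0\subseteq\eta d^{3/2}T$.

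\textbf{John's theorem and a regular simplex.} By the symmetric form of John's theorem there is an ellipsoid $E$ centered at $o$ with $E\subseteq M_0\subseteq\sqrt d\,E$; applying a linear map fixing $o$ we may assume $E=B^d$. Let $S$ be a regular $d$-simplex inscribed in $B^d$ with centroid $o$. Since the ratio of circumradius to inradius of a regular $d$-simplex equals $d$, we obtain $\tfrac1d B^d\subseteq S\subseteq B^d$, i.e.\ $B^d\subseteq d\,S$. Chaining inclusions, $S\subseteq B^d\subseteq M_0\subseteq\sqrt d\,B^d\subseteq\sqrt d\,(d\,S)=d^{3/2}S$; transporting $S$ back by the inverse linear map (which fixes $o$, preserves centroids, and commutes with dilations centered at $o$) produces the required $T\subseteq M_0\subseteq d^{3/2}T$ with $c(T)=o$.

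The argument is essentially this short. The only genuine inputs are the $\sqrt d$-rounding in the symmetric John theorem and the identity $R/r=d$ for the regular simplex; the one place that wants a little care is the reduction — in particular, the point is that replacing $M$ by $M_0=M\cap(-M)$ is exactly what forces the John ellipsoid to be centered at the origin, so that the resulting simplex automatically has centroid $o$. Beyond assembling these ingredients I do not expect any real obstacle.
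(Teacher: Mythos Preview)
Your argument is correct and follows essentially the same route as the paper: pass to the symmetric body $M_0=M\cap(-M)$, apply John's theorem to get a ball $E$ with $E\subset M_0\subset\sqrt{d}\,E$, and inscribe a regular simplex in $E$ to pick up the extra factor $d$. Your reduction step is spelled out in more detail than the paper's (which simply writes $\eta^{-1}M\subset M\cap(-M)\subset\sqrt{d}\,E\subset d^{3/2}T$), but the substance is identical.
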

\begin{proof} We may assume that the John ellipsoid $E$ of maximal
  volume  contained in $M\cap(-M)$ is Euclidean ball, and let
  $T\subset M\cap (-M)$ be an inscribed regular simplex. Then
  $\eta^{-1}M\subset M\cap(-M)\subset\sqrt{d} E\subset d^{3/2}T$.
\end{proof}

For Proposition~\ref{fBrunnMinkowski-stab} we use the notation of the
previous sections, i.e.,  $K\in\Kn$ is a convex body  with $c(K)=o$, 
 $d,k\in\{1,\ldots,n-1\}$ with $d+k=n$,
and  $L$ is a $d$-dimensional linear subspace. 
For $x\in K|L$, we set 
$$
f(x)=f_{K,L}(x)={\mathcal H}_k(K\cap(x+L^\bot)).
$$
%The measure of the empty set is $0$.

\begin{prop}
\label{fBrunnMinkowski-stab}
There exist $t_0,\gamma>0$ depending on $n$ with the following properties.
Let $t\in(0,t_0)$, let $M_*\subset K|L$ be a $d$-dimensional convex compact set, and let
$K_*=K\cap(M_*+L^\bot)$. If $e^{-t}\leq f(x)/f(o)\leq e^t$ holds for any  $x\in M_*$, then
 there exist a
 $k$-dimensional compact convex set $C\subset L^\bot$, and a complementary $d$-dimensional compact convex set $M$ such that
$$
\delta_{\rm vol}(K,C+M)\leq \gamma\max\left\{\frac{\V(K\backslash K_*)}{\V(K)},t^{1/2}\right\}.
$$
\end{prop}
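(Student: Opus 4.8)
\textbf{Proof plan for Proposition~\ref{fBrunnMinkowski-stab}.}
The plan is to reduce the statement, via the already-proved structural results, to the stability version of the Brunn--Minkowski inequality in $L^\bot$ (Lemma~\ref{BrunnMinkowski-stab}), applied to a well-chosen finite family of sections of $K$. First I would set $v=f(o)$ and normalize; the hypothesis says all sections $C(x)=K\cap(x+L^\bot)$ for $x\in M_*$ have $k$-volume within a factor $e^{\pm t}$ of $v$. By the concavity of $f^{1/k}$ (Brunn--Minkowski) and the centroid condition \eqref{eq:centroid}, the set $M_*$ can be assumed (after a harmless shrinking that costs only $\V(K\setminus K_*)/\V(K)$ in the volume distance) to contain a $d$-simplex $T$ with centroid $o$ and with $M_*\subset c_n T$ for a dimensional constant $c_n$: this is where Lemma~\ref{simplex-inK} enters, once one observes that $o\in K|L$ forces $-M_*'\subset\eta M_*'$ on a slightly reduced body by Gr\"unbaum's inequality \eqref{Grun} applied to $f(x)\,d\ha_d(x)$ on $K|L$.

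Next I would take the vertices $x_0,\dots,x_d$ of $T$ and apply the inclusion
$$
\sum_{i=0}^d \tfrac1{d+1}\,C(x_i)\subset C(o),
$$
which holds because $\sum x_i=o$ and $f^{1/k}$ is concave along the relevant segments (exactly the argument used at the end of the proof of Theorem~\ref{Henk-Linke-body}). Since each $\ha_k(C(x_i))\ge e^{-t}v$ and $\ha_k(C(o))\le e^{t}v$, this is precisely the hypothesis of Lemma~\ref{BrunnMinkowski-stab} with $m=d+1$, all weights $\alpha_i=1/(d+1)$ (so $\omega=1$), and $\varepsilon$ of order $t$. That lemma then delivers a $k$-dimensional set $C\subset L^\bot$ (one may take $C=C(o)-c(C(o))$) with
$$
\delta_{\rm vol}(C(x_i),C)\le \gamma' t^{1/2},\qquad
\bigl\|c(C(o))-\tfrac1{d+1}\textstyle\sum_i c(C(x_i))\bigr\|_{C(o)-C(o)}\le \gamma' t^{1/2},
$$
and, covering $M_*$ by $O_n(1)$ such simplices (or iterating the estimate along segments), one gets $\delta_{\rm vol}(C(x),C)\le \gamma'' t^{1/2}$ uniformly for $x\in M_*$, together with control on how far the centroid map $x\mapsto c(C(x))$ deviates from an affine function $x\mapsto \hat x+\ell(x)$, $\ell:L\to L^\bot$ linear.

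Having this, I would reconstruct the product body. Let $M$ be the graph of $x\mapsto x+\ell(x)$ over $M_*$ (a $d$-dimensional convex set complementary to $L^\bot$), and compare $K_*=K\cap(M_*+L^\bot)$ to $C+M$ by Fubini over $M_*$: for each $x$, the slice of $K_*$ is a translate of $C(x)$, the slice of $C+M$ is a translate of $C$, the two translation vectors differ by at most $\gamma'' t^{1/2}$ in the $C-C$ norm, and the symmetric difference of the slices is at most $\delta_{\rm vol}(C(x),C)+ (\text{translation error})\le \gamma''' t^{1/2}$ times $\ha_k(C)$ by Lemma~\ref{translate-stab}. Integrating gives $\V(K_*\,\Delta\,(C+M))\le \gamma''''\, t^{1/2}\,\V(K)$, and then
$$
\V\bigl(K\,\Delta\,(C+M)\bigr)\le \V(K\setminus K_*)+\V\bigl(K_*\,\Delta\,(C+M)\bigr)
\le \gamma\max\!\left\{\tfrac{\V(K\setminus K_*)}{\V(K)},\,t^{1/2}\right\}\V(K),
$$
after a final rescaling of $C+M$ to volume $\V(K)$ (costing $O_n(1)$ times the same quantity, by Lemma~\ref{centroidhom}). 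The main obstacle I anticipate is the last bookkeeping step: controlling the centroid displacement $c(C(x))$ as an \emph{affine} function of $x$ with an error $O(t^{1/2})$ uniformly over all of $M_*$ (not just at simplex vertices), and checking that the resulting $M$ is genuinely convex and $d$-dimensional; this requires combining the pointwise centroid estimates from Lemma~\ref{BrunnMinkowski-stab} with the Lipschitz/convexity properties of $x\mapsto c(C(x))$ coming from the concavity of $f^{1/k}$, and absorbing the John-position distortion $c_n$ from Lemma~\ref{simplex-inK} into the constants.
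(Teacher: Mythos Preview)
Your outline matches the paper's closely: choose a simplex via Lemma~\ref{simplex-inK}, set $C=C(o)-c(C(o))$, feed sections into Lemma~\ref{BrunnMinkowski-stab}, assemble via Fubini, and clean up with Lemmas~\ref{translate-stab} and~\ref{nocentroid}. The gap is exactly where you place it, and your proposed fix---``covering $M_*$ by $O_n(1)$ such simplices, or iterating along segments''---does not deliver what is needed. A two-point inclusion such as $\tfrac{\eta}{1+\eta}C(x)+\tfrac{1}{1+\eta}C(-\eta x)\subset C(o)$ does give $\delta_{\rm vol}(C(x),C(o))\lesssim t^{1/2}$, but the centroid output of Lemma~\ref{BrunnMinkowski-stab} in that application is a single scalar relation among $c(C(x))$, $c(C(-\eta x))$, $c(C(o))$, from which there is no way to conclude that $x\mapsto c(C(x))$ is globally close to a \emph{single} affine map of $x$; covering by several simplices with different centroids only multiplies the problem, since each gives a centroid relation relative to a different reference section.

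The paper resolves this by applying Lemma~\ref{BrunnMinkowski-stab} not once to the $d+1$ vertices, but once for \emph{each} $x$, to the $d+2$ sections $C(x),C(v_0),\dots,C(v_d)$ together. Because $M_*\subset c_n[v_0,\dots,v_d]$, one can write $-\tfrac{1}{2c_n}x=\sum_i\alpha_i v_i$ with every $\alpha_i\ge\tfrac{1}{2(d+1)}$, hence $o=\beta x+\sum_i\beta_i v_i$ with all $d+2$ coefficients bounded below by a dimensional $\omega>0$ (so $\omega$ is \emph{not} $1$ here). The inclusion $\beta C(x)+\sum_i\beta_i C(v_i)\subset C(o)$ then yields in one stroke both $\delta_{\rm vol}(C(x),C(o))\le\gamma^* t^{1/2}$ and
\[
\Bigl\|c(C(o))-\beta\,c(C(x))-\sum_i\beta_i\,c(C(v_i))\Bigr\|_{C-C}\le\gamma^* t^{1/2}.
\]
Now define the affine map $\varphi:L\to A=\aff\{c(C(v_0)),\dots,c(C(v_d))\}$ by $\varphi(v_i)=c(C(v_i))$. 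Since $\varphi(o)=\beta\varphi(x)+\sum_i\beta_i\varphi(v_i)$ by affinity and $\varphi(v_i)=c(C(v_i))$ by construction, subtracting the displayed estimate for general $x$ from its instance at $x=o$ and dividing by $\beta$ gives $\|c(C(x))-\varphi(x)\|_{C-C}\le\gamma'' t^{1/2}$ uniformly in $x$. This is precisely the affine control you were missing, obtained with no covering or iteration; and $M=A\cap(M_*+L^\bot)$ is then manifestly convex and $d$-dimensional.
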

\begin{proof} 
Since $c(K)=o$ we have $-K\subset nK$. Hence $-K|L\subset
nK|L$ and  we may choose, according to Lemma~\ref{simplex-inK}, 
$v_0,\ldots,v_d\in e^{-s}K|L$, for some $s>0$,
such that $v_0+\ldots+v_d=o$, and
\begin{equation}
\label{simplex-inKL}
e^{-s}K|L\subset n^{5/2}[v_0,\ldots,v_d].
\end{equation}
For $x\in e^{-s}K|L$, let $K(x)=K\cap(x+L^\bot)$, and let
\begin{equation}
\label{tildeKdef}
\widetilde{K}(x)=\frac{f(o)^{1/k}}{f(x)^{1/k}}\,K(x),\mbox{ \ and hence ${\mathcal H}_k(\widetilde{K}(x))=f(o)$.}
\end{equation}
We define 
\begin{eqnarray*}
A&=&{\rm aff}\{c(K(v_0)),\ldots,c(K(v_d))\},\\
M&=&\{y\in A: (y+L^\bot)\cap e^{-s}K\neq \emptyset\},\\
C&=&K(o)-c(K(o)).
\end{eqnarray*}
We compare $K_*$ to $M+C$. To this end we consider the affine bijection
$\varphi:L\to A$ defined by the correspondance 
$\{\varphi(x)\}=A\cap(x+L^\bot)$ for $x\in L$. In particular,
\begin{equation}
\label{phivio}
\varphi(v_i)=c(K(v_i)),\;i=0,\ldots,d\mbox{ \ and \ }\varphi(o)=\frac1{d+1}\sum_{i=0}^dc(K(v_i)).
\end{equation}

Let $x\in e^{-s}K|L$. We have $\frac{-1}{2n^{5/2}}x\in\frac12[v_0,\ldots,v_d]$ according to (\ref{simplex-inKL}), thus
$$
\frac{-1}{2n^{5/2}}\,x=\sum_{i=0}^d\alpha_iv_i\mbox{ \ where }
\sum_{i=0}^d\alpha_i=1\mbox{ and }\alpha_i\geq\frac1{2(d+1)},\;i=0,\ldots,d.
$$
We define
\begin{eqnarray*}
\tilde{\beta}&=&\frac{\beta f(x)^{1/k}}{f(o)^{1/k}} \mbox{ \ where \ }
\beta=\frac{1}{1+2n^{5/2}};\\
\tilde{\beta}_i&=&\frac{\beta_if(v_i)^{1/k}}{f(o)^{1/k}} 
\mbox{ \ where \ }\beta_i= \frac{\alpha_i2n^{5/2}}{1+2n^{5/2}},\;i=0,\ldots,d,
\end{eqnarray*}
and hence $\beta+\sum_{i=0}^d\beta_i=1$ and 
$\beta x+\sum_{i=0}^d\beta_iv_i=o$. The condition on the function $f$ yields that
$$
e^{-t/k}\leq \tilde{\beta}+\tilde{\beta}_0+\ldots+\tilde{\beta}_d\leq e^{t/k},
$$
and the ratio of any two of $\tilde{\beta},\tilde{\beta}_0,\ldots,\tilde{\beta}_d$ is at least $1/(4n^{5/2})$.
In particular, 
$$
e^t(\tilde{\beta}+\tilde{\beta}_0+\ldots+\tilde{\beta}_d)^k f(o)\geq{\mathcal H}_k(K(o)),
$$ 
and  the convexity of $K$ implies (cf.~\eqref{tildeKdef})
\begin{equation*}
\label{beta-x-vi}
\tilde{\beta}\widetilde{K}(x)+\sum_{i=0}^d\tilde{\beta}_i\widetilde{K}(v_i)=
\beta K(x)+
\sum_{i=0}^d\beta_iK(v_i)\subset K(o).
\end{equation*}
We deduce from  Lemma~\ref{BrunnMinkowski-stab}, the stability version
of the Brunn-Minkowski inequality,  that
there exists $\gamma^*>0$ depending on $n$ such that
for  $i=0,\ldots,d$, we have
\begin{eqnarray}
\label{o-vi-vol}
\delta_{\rm vol}(K(v_i),K(o)),\;\delta_{\rm vol}(K(x),K(o))&\leq&\gamma^*t^{1/2},\\
\label{o-vi-c}
\left\|c(K(o))-\beta c(K(x))-\sum_{i=1}^d\beta_ic(K(v_i)) \right\|_{K(o)-K(o)}&\leq &
\gamma^*t^{1/2}.
\end{eqnarray}
Naturally, if $k=1$, then even $t$ can be written instead of $t^{1/2}$ on the right hand side of (\ref{o-vi-vol}) and
(\ref{o-vi-c}), but we ignore this possibility.

First we asssume that $x=o$. In this case (\ref{phivio}) and (\ref{o-vi-c}) yield
\begin{equation}
\label{c(K(o))}
\left\|c(K(o))-\varphi(o) \right\|_{K(o)-K(o)}\leq 
\gamma^*t^{1/2}.
\end{equation}

Next let $x\in e^{-s}K|L$ be arbitrary. We have
$\beta \varphi(x)+\sum_{i=0}^d\beta_i\varphi(v_i)=\varphi(o)$ because $\varphi$ is affine.
We recall that $C=K(o)-c(K(o))$.
Let 
$$
w=c(K(o))-\beta c(K(x))-\sum_{i=1}^d\beta_ic(K(v_i)).
$$
Since $\beta \varphi(x)=\varphi(o)-\sum_{i=0}^d\beta_i\varphi(v_i)$, we have
\begin{equation*}
\begin{split}
 \big\|c(K(x)) & -\varphi(x) \big\|_{C-C} =  \frac{\left\|\beta c(K(x))-\beta \varphi(x)) \right\|_{C-C}}{\beta}\\
 &\leq \frac{\|\beta c(K(x))+w-\beta \varphi(x)) \|_{C-C}}{\beta}
 +\frac{\|-w\|_{C-C}}{\beta}\\
 &= \frac{\|c(K(o))-\varphi(o) -\sum_{i=0}^d\beta_i(c(K(v_i))-\varphi(v_i))\|_{C-C}}{\beta}\\
 &\,+\frac{\|c(K(o))-\beta
   c(K(x))-\sum_{i=1}^d\beta_ic(K(v_i))\|_{C-C}}{\beta}.
\end{split}
\end{equation*}
As $\varphi(v_i)=c(K(v_i))$ according to (\ref{phivio}), it follows by (\ref{o-vi-c}) and (\ref{c(K(o))}) that
\begin{equation}
\label{cK(x)}
\left\|c(K(x))-\varphi(x) \right\|_{C-C}\leq 
\frac{2\gamma^*}{\beta}\cdot t^{1/2}<6n^{5/2}\gamma^* t^{1/2}.
\end{equation}

For $x\in e^{-s}K|L$, we deduce in order from (\ref{cK(x)}), (\ref{o-vi-vol}) and (\ref{tildeKdef}) that
\begin{equation*}
\begin{split}
{\mathcal H}_k\big((C+&\varphi(x))\Delta K(x) \big)\leq
{\mathcal H}_k\big((C+\varphi(x))\Delta (C+c(K(x))) \big)\\
&+{\mathcal H}_k\big( (C+c(K(x)))\Delta(\widetilde{K}(x)-c(\widetilde{K}(x))+c(K(x))) \big)\\
&+{\mathcal H}_k\big( (\widetilde{K}(x)-c(\widetilde{K}(x))+c(K(x))) \Delta K(x)\big)\\
&< 9n^{5/2}\gamma^* t^{1/2} {\mathcal H}_k(C).
\end{split}
\end{equation*}
Hence, by Fubini's theorem we get 
$$
\V(K_*\Delta (M+C))<9n^{5/2}\gamma^* t^{1/2}\V(M+C)
$$
and Lemma~\ref{nocentroid} yields the required estimate for
$\delta_{\rm vol}$.
\end{proof}

\section{Some more properties of $f_{K,L}(x)$\label{prop_f}}
Here we establish some more properties of the log-concave function
(cf.~\eqref{eq:function}) 
\begin{equation*}
f_{K,L}: L \to \R_{\geq 0}\text{ with } x\mapsto \mathcal{H}_k(K\cap
(x+L^\perp)), 
%\label{eq:function}
\end{equation*} 
and use the notation as introduced in Section 2, i.e.,  $K\in\Kn$ is
an $n$-dimensional convex body with
$c(K)=0$, $L$ is a $d$-dimensional subspace $L$,
$1\leq d\leq n-1$,  and we set $k=n-d$. Since we will keep $K$ and $L$
fixed, we just write $f(x)$ instead of $f_{K,L}(x)$. As in Section 2
let $g(x)$ be the subgradient of $f(x)$, and we recall that
$g(x)=\nabla f(x)/f(x)$ almost everywhere on $\inte(K)|L$.

 For  $\eta\geq 0$, we set
\begin{eqnarray*}
M_\eta&=&\{x\in K|L: \ln f(x)-\ln f(o)\geq \langle g(o),x\rangle-\eta\},\\
K_\eta&= & K\cap(M_\eta+L^\bot).
\end{eqnarray*}% \todo{I think, it is better to define
  % $M_\eta\subset\inte(K)|L$, but then it is not compact - do not see
  % that this property is needed? I WOULD LEAVE IT AS IT IS, OTHERWISE WE HAVE TO CHECK ALL POSSIBLE USES OF THE NOTION}
Since $\ln f$ is concave, both $M_\eta$ and $K_\eta$ are compact and convex. 

\begin{lemma}
\label{nablafstab} Let  $\eta\geq 0$. Then 
$$
\int_{K|L}\langle \nabla f(x),x\rangle d{\mathcal H}_d(x)\leq -\eta \V(K\backslash K_\eta).
$$
\end{lemma}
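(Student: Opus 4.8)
The plan is to mimic the averaging argument in the proof of Proposition~\ref{fcentroid}, but to keep track of the quantitative slack coming from the set $K\setminus K_\eta$ where the log-concave function $f$ is strictly below its linearization at $o$. Recall from \eqref{hprimef} that for every $z\in(\inte K)|L$ where $\nabla f$ exists we have $\langle \nabla f(z),z\rangle \le f(z)\,\langle g(o),z\rangle$; this inequality alone, integrated against $\mathrm{d}\ha_d$ and combined with the centroid identity \eqref{eq:centroid}, only gives $\int_{K|L}\langle\nabla f(x),x\rangle\,\mathrm{d}\ha_d(x)\le 0$. To gain the extra term $-\eta\,\V(K\setminus K_\eta)$ I would sharpen the pointwise bound precisely on $M_\eta^c = (K|L)\setminus M_\eta$.

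First I would observe the pointwise refinement: for $z\in(\inte K)|L$ in the set $(K|L)\setminus M_\eta$ one has, by definition of $M_\eta$, $\ln f(z)-\ln f(o) < \langle g(o),z\rangle - \eta$, and applying \eqref{eq:gradient_concave} with $y=z$, $x=o$ gives $\langle g(z),z\rangle \le \ln f(z)-\ln f(o) < \langle g(o),z\rangle-\eta$. Hence, wherever $\nabla f$ exists on this set, $\langle\nabla f(z),z\rangle = f(z)\langle g(z),z\rangle < f(z)\big(\langle g(o),z\rangle-\eta\big)$, while on $M_\eta$ the weaker bound $\langle\nabla f(z),z\rangle\le f(z)\langle g(o),z\rangle$ still holds. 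Combining the two estimates and using Proposition~\ref{rem:basicfacts_concave}~ii) and Lemma~\ref{nablafL1} to integrate over $\inte(K)|L$ (a full-measure subset of $K|L$), I get
\begin{equation*}
\int_{K|L}\langle\nabla f(x),x\rangle\,\mathrm{d}\ha_d(x)
\le \int_{K|L}\langle g(o),x\rangle\,f(x)\,\mathrm{d}\ha_d(x)
\;-\;\eta\int_{(K|L)\setminus M_\eta} f(x)\,\mathrm{d}\ha_d(x).
\end{equation*}
The first integral on the right is exactly $\langle g(o),\int_{K|L} x f(x)\,\mathrm{d}\ha_d(x)\rangle = 0$ by the centroid identity \eqref{eq:centroid}. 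By Fubini's theorem (the same computation as for $\int_{K|L}f(x)\,\mathrm{d}\ha_d(x)=\V(K)$), the second integral equals $\V\big(K\cap((K|L\setminus M_\eta)+L^\bot)\big)=\V(K\setminus K_\eta)$, since $K_\eta=K\cap(M_\eta+L^\bot)$. This yields the claimed inequality.

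The only genuinely delicate point is making sure the pointwise inequality is legitimate $\ha_d$-almost everywhere and that the resulting integrand is integrable, so that the manipulation above is valid; both are handled by the facts already assembled in Section~\ref{proof1} — namely that $f$ is differentiable a.e.\ on $\inte(K)|L$ (Proposition~\ref{rem:basicfacts_concave}~ii)), that $\partial(K|L)$ and the singular set have $\ha_d$-measure zero, and that $\|\nabla f\|\in L^1(K|L)$ (Lemma~\ref{nablafL1}), which also gives $x\mapsto\langle\nabla f(x),x\rangle\in L^1(K|L)$. Note the limiting case $\eta=0$ recovers \eqref{fcentroid0}, so the lemma is a genuine strengthening of Proposition~\ref{fcentroid}. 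I do not expect any serious obstacle beyond this bookkeeping.
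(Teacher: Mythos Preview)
Your proof is correct and follows essentially the same route as the paper: both arguments use the pointwise bound $\langle\nabla f(z),z\rangle\le f(z)\langle g(o),z\rangle$ everywhere, sharpen it by $-\eta f(z)$ on $(K|L)\setminus M_\eta$ via \eqref{eq:gradient_concave}, integrate, and invoke the centroid identity \eqref{eq:centroid} together with $\int_{(K|L)\setminus M_\eta}f=\V(K\setminus K_\eta)$. Your write-up is in fact slightly cleaner than the paper's, which states the bound on $M_\eta$ as $\langle\nabla f(x),x\rangle\le 0$ (a harmless slip, since the integration step there still appeals to \eqref{eq:centroid}).
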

\begin{proof} Let $x\in(\inte K)|L$ and $\eta\geq 0$, and let us assume  
$\ln   f(x)-\ln f(o)\leq \langle g(o),x\rangle-\eta$. Then by  \eqref{eq:gradient_concave}
we have  $\langle g(x),x\rangle\leq \langle g(o),x\rangle-\eta$. 
Hence  if $\nabla f$ exists at $x\in(\inte K)|L$, then
\begin{eqnarray*}
\langle \nabla f(x),x\rangle&\leq &0 \mbox{ \ provided that $x\in M_\eta$,}\\
\langle \nabla f(x),x\rangle&\leq &\langle g(o),f(x)x\rangle-f(x)\eta
\mbox{ \ provided that $x\not\in M_\eta$}.
\end{eqnarray*}
We conclude the lemma by \eqref{eq:centroid}
and $\V(K\backslash K_\eta)=\int_{(K|L)\backslash M_\eta}f(x)\,dx$.
\end{proof}

% In particular, the Brunn-Minkowski theorem yields that $f^{1/k}$, and in turn $\ln f$ are concave.
% Since the centroid of $K$ is the origin, we have 
% \begin{equation}
% \label{centroidfo}
% \int_{K|L}xf(x)\,d{\mathcal H}_{n-k}(x)=o.
% \end{equation}
% As $\ln f$ is concave, there exists a so called subgradient $g(x)\in L$ for any $x\in K|L$, satisfying
% $$
% \ln f(y)-\ln f(x)\leq \langle g(x),y-x\rangle \mbox{ \ for $y\in K|L$.}
% $$
% Actually $f$ is differentiable ${\mathcal H}_d$ almost everywhere on ${\rm int}K|L$, and if
% $\nabla f$ exists at $x\in{\rm int}K|L$, then $g(x)=\frac{\nabla f(x)}{f(x)}$.

% \begin{lemma}
% \label{gfdifference}
% Let  $\ln f(x)-\ln f(o)\leq \langle g(o),x\rangle-\eta$ for some
% $\eta\geq 0$ and $x\in ({\rm int}K)|L$, $x\neq o$. Then
%  $\langle g(x),x\rangle\leq \langle g(o),x\rangle-\eta$.
% \end{lemma}
% \proof
% Let $h(t)=\ln f(tx)$ for $t\in[0,1]$, what is a concave function satisfying
% $h(s)-h(t)\leq \langle g(tx),sx-tx\rangle$ if $0\leq s<t\leq 1$.
% In addition, there
% exists monotone decreasing one sided derivative 
%  $h'_-(t)=\lim_{s\to t^-}\frac{h(s)-h(t)}{s-t}\geq \langle g(tx),x\rangle$ for $t\in(0,1]$. 
% Therefore
% \begin{eqnarray*}
% \langle g(o),x\rangle-\eta&\geq &\ln f(x)-\ln f(o)=h(1)-h(0)=\int_0^1h'_-(t) dt\\
% &\geq & \int_0^1h'_-(1) dt=h'_-(1)\geq  \langle g(x),x\rangle. \mbox{ \ }\proofbox
% \end{eqnarray*}

\begin{lemma}
\label{fxf0eta}
Let $\eta\in[0,1]$. If $\V(K\backslash K_\eta)\leq \V(K)/(2^ne)$, then
$$
e^{-\tau}\leq \frac{f(x)}{f(o)}\leq e^{\tau}
\mbox{ \ for $\tau=7n^{3/2}\eta^{1/2}$ and $x\in M_\eta$.}
$$
\end{lemma}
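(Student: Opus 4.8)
The plan is to bound $f$ from below and above on $M_\eta$ using only the convexity of $\ln f$, the centroid condition $c(K)=o$ (which controls $g(o)$), and the volume hypothesis $\V(K\setminus K_\eta)\le \V(K)/(2^ne)$ (which, via Lemma~6.1(i) applied in $L$, forces $M_\eta$ to contain a scaled copy of $K|L$). First I would record the two elementary inequalities governing the definition of $M_\eta$: for $x\in M_\eta$ we have $\langle g(o),x\rangle-\eta\le \ln f(x)-\ln f(o)\le\langle g(o),x\rangle$, where the right inequality is just \eqref{eq:gradient_concave} with $y=o$. So the whole problem reduces to showing that $|\langle g(o),x\rangle|$ is at most something like $6n^{3/2}\eta^{1/2}$ for $x\in M_\eta$; the extra $\eta$ is then absorbed since $\eta\le\eta^{1/2}$ on $[0,1]$. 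Hence the real content is an a priori bound on $\|g(o)\|$ (measured against the body $K|L$), and here the centroid hypothesis enters: since $c(K)=o$ and, by \eqref{hprimef}, $\ln f(x)-\ln f(o)\le\langle g(o),x\rangle$ with equality forced to be tight in an averaged sense by \eqref{fcentroid0}, the vector $g(o)$ cannot be large without violating either $c(K)=o$ or the near-maximality of $\V(K_\eta)$.

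More concretely, the key step is to quantify this. Applying Lemma~6.1(i) inside the $d$-dimensional space $L$ to the convex body $M_\eta\subset K|L$ with $\V_d((K|L)\setminus M_\eta)\le\V_d(K|L)\cdot\big(\V(K\setminus K_\eta)/\text{(something)}\big)$ — more carefully, one first notes $f(x)\ge e^{-k}f(o)\cdot(\text{ratio})$ type bounds from log-concavity so that the $d$-volume deficit of $M_\eta$ in $K|L$ is controlled by the $n$-volume deficit $\V(K\setminus K_\eta)/\V(K)$ up to a dimensional constant — we get that $(1-c_n(et)^{1/d})\,(K|L)\subset M_\eta$ for $t$ the relative volume deficit, which under the hypothesis $t\le 1/(2^ne)$ gives $\tfrac12(K|L)\subset M_\eta$, say. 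Now integrate the left-hand inequality $\ln f(x)-\ln f(o)\ge\langle g(o),x\rangle-\eta$ against $f(x)\,d\ha_d(x)$ over $M_\eta$, use $\int_{K|L}f(x)x\,d\ha_d(x)=0$ from \eqref{eq:centroid} together with the bound $\int_{(K|L)\setminus M_\eta}|f(x)\langle g(o),x\rangle|$ controlled by the volume deficit, and compare with the trivial upper bound $\ln f(x)-\ln f(o)\le \langle g(o),x\rangle$: the difference of the two integrals is $\le\eta\V(K)$ plus error terms, which pins $\langle g(o),\cdot\rangle$ down on the half-body $\tfrac12(K|L)$. Symmetrizing (using $-\tfrac12(K|L)\subset M_\eta$ as well, or rather that $K|L$ contains $o$ in its interior with the $-K\subset nK$ bound so that both $\pm$ directions are reachable) yields $|\langle g(o),x\rangle|\le C n^{3/2}\eta^{1/2}$ for all $x\in K|L$, hence in particular for $x\in M_\eta$.

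Putting these together: for $x\in M_\eta$,
$$
\big|\ln f(x)-\ln f(o)\big|\le \max\{\,|\langle g(o),x\rangle|,\ |\langle g(o),x\rangle|+\eta\,\}\le C n^{3/2}\eta^{1/2}+\eta\le 7n^{3/2}\eta^{1/2},
$$
where the last step uses $\eta\in[0,1]$ and a slightly generous choice of the constant absorbed into $7n^{3/2}$, and the per-coordinate loss in passing from $L^\infty$-type control of $\langle g(o),\cdot\rangle$ to the stated $\tau$ is where the factor $n^{3/2}$ (rather than $n^{1/2}$ or $n$) comes from — roughly $n^{1/2}$ from the John-ellipsoid/simplex normalization of $K|L$ and a further dimensional factor from the Grünbaum bound in Lemma~6.1(i). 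The main obstacle I anticipate is the bookkeeping in the averaging step: one must show that the volume hypothesis $\V(K\setminus K_\eta)\le\V(K)/(2^ne)$ really does translate into both (a) $M_\eta$ occupying a definite fraction of $K|L$ and (b) the tail integral $\int_{(K|L)\setminus M_\eta}|f\langle g(o),x\rangle|$ being negligible compared to $\|g(o)\|\cdot\V(K)$ — and doing this with explicit, dimension-only constants so that the clean bound $\tau=7n^{3/2}\eta^{1/2}$ emerges. This is exactly the kind of estimate Lemma~6.1 and equation~\eqref{eq:centroid} were set up to supply, so no genuinely new tool is needed, only care with constants.
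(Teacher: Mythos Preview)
Your overall framework is the same as the paper's: sandwich $\ln f(x)-\ln f(o)$ between $\langle g(o),x\rangle-\eta$ and $\langle g(o),x\rangle$ for $x\in M_\eta$, then reduce everything to bounding $|\langle g(o),x\rangle|$. You also correctly anticipate that one needs $\tfrac12(K|L)\subset M_\eta$ and the relation $-(K|L)\subset n(K|L)$, so that any $x\in M_\eta$ has $\pm\tfrac1{2n}x\in M_\eta$. One simplification: you do not need to transfer the volume deficit from $K$ to $K|L$. Apply Lemma~\ref{centroidhom}\,(i) directly in $\R^n$ to $K_\eta\subset K$; the hypothesis $\V(K\setminus K_\eta)\le \V(K)/(2^ne)$ gives $\tfrac12 K\subset K_\eta$, and projecting to $L$ yields $\tfrac12(K|L)\subset M_\eta$ immediately.

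The substantive gap is in your averaging step for bounding $g(o)$. As you set it up, integrating the two inequalities against $f\,d\mathcal H_d$ over $M_\eta$ gives no information beyond $0\le\eta\V(K_\eta)$: both sides of your sandwich integrate to $\int_{M_\eta}f\langle g(o),x\rangle$, up to $\eta\V(K_\eta)$, and using \eqref{eq:centroid} only converts this into a tail integral over $(K|L)\setminus M_\eta$ that itself involves $g(o)$, so the argument is circular. More importantly, nothing in this line of reasoning produces the exponent $\eta^{1/2}$; the natural output of a first-order averaging argument would be $O(\eta)$ or an uncontrolled bound. The paper's mechanism for the square root is different and elementary: it uses the \emph{concavity of $f^{1/k}$} (not just log-concavity) at symmetric points. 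If $\pm y\in M_\eta$, then
\[
f(o)^{1/k}\;\ge\;\tfrac12\bigl(f(y)^{1/k}+f(-y)^{1/k}\bigr)\;\ge\;f(o)^{1/k}e^{-\eta/k}\cdot\tfrac12\bigl(e^{\langle g(o),y\rangle/k}+e^{-\langle g(o),y\rangle/k}\bigr),
\]
and the $\cosh$ on the right is at least $1+\bigl(\langle g(o),y\rangle/(2k)\bigr)^2$. This forces $|\langle g(o),y\rangle|\le 3\sqrt{k\eta}$. Applying this with $y=\tfrac1{2n}x$ for $x\in M_\eta$ gives $|\langle g(o),x\rangle|\le 6n\sqrt{k\eta}\le 6n^{3/2}\eta^{1/2}$, and then your final sandwich yields $\tau=7n^{3/2}\eta^{1/2}$. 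So the missing idea is this second-order use of the Brunn--Minkowski concavity; replace your integration paragraph with it and the proof goes through with the stated constant.
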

\begin{proof} By Lemma~\ref{centroidhom} we have $\frac12\,K\subset
  K_\eta$, and $f(x)\geq f(o)e^{\langle g(o),x\rangle-\eta}$ for $x\in  K_\eta$. 
We claim that for $\pm y\in K_\eta$
\begin{equation}
\label{g(o)small}
|\langle g(o),y\rangle|\leq 3\sqrt{k\eta}.
\end{equation}
The concavity of $f^{1/k}$ yields that
\begin{eqnarray*}
f(o)^{1/k}&\geq& \frac{f(y)^{1/k}+f(-y)^{1/k}}2 
\geq f(o)^{1/k}e^{-\eta/k}\frac{e^{\langle g(o),y\rangle/k}+e^{\langle g(o),-y\rangle/k}}2 \\
&\geq & f(o)^{1/k}e^{-\eta/k} \left(1+\left(\frac{\langle g(o),y\rangle}{2k}\right)^{2} \right).
\end{eqnarray*}
Since $e^t<1+2t$ for $t\in[0,1]$, we conclude (\ref{g(o)small}).

It follows from $\frac12\,K\subset K_\eta$ and $-K\subset nK$ that
$\frac12\,(K|L)\subset M_\eta$ and $-(K|L)\subset n(K|L)$. 
In particular, if $x\in  M_\eta$ is arbitrary, then  
 $\pm y\in M_\eta$ for $y=\frac1{2n}\,x$. We deduce from (\ref{g(o)small})
that $|\langle g(o),x\rangle|=2n|\langle g(o),y\rangle|\leq 6n\sqrt{k\eta}$. 
Therefore, the lemma follows from %the estimates
 $f(o)e^{\langle g(o),x\rangle-\eta} \leq f(x)\leq f(o)e^{\langle g(o),x\rangle}$.
\end{proof}

\section{Proofs of Theorem~\ref{Henk-Linke-stab} and Theorem~\ref{Henk-Linke-stab1}\label{proof23}}

For the proofs of the two stability theorems \ref{Henk-Linke-stab} and
\ref{Henk-Linke-stab1}, let $K\in\Kn$ with $c(K)=o$, and let 
$$
\V_K(L\cap S^{n-1})> \frac{d-\varepsilon}{n}\,\V(K)
$$
for a non-trivial linear subspace
$L$ with $\dim L=d$ and $\varepsilon\in(0,(2^ne)^{-5})$. As before,
for $x\in K|L$ let
$$
f(x)={\mathcal H}_k(K\cap(x+L^\bot)).
$$
According to Proposition~\ref{VKf}, the condition on $\V_K(L\cap S^{n-1})$ is equivalent with
\begin{equation}
\label{HenkLinke}
\int_{K|L}\langle \nabla f(x),x\rangle d{\mathcal H}_d(x)>-\varepsilon \V(K).
\end{equation}

\begin{proof}[Proof of Theorem~\ref{Henk-Linke-stab}] We set $\eta=\varepsilon^{4/5}$, and use the notation of Lemma~\ref{nablafstab}. 
It follows from (\ref{HenkLinke}) and Lemma~\ref{nablafstab}  that
\begin{equation*}
\label{Ketacomplement}
 \V(K\backslash K_\eta)<\varepsilon^{1/5} \V(K)< \V(K)/(2^ne),
\end{equation*}
and from Lemma~\ref{fxf0eta} that
\begin{equation*}
\label{Metaf}
e^{-t}\leq \frac{f(x)}{f(o)}\leq e^{t}
\mbox{ \ for $t=7n^{3/2}\varepsilon^{2/5}$ and $x\in M_\eta$.}
\end{equation*}

We assume that $\varepsilon$ is small enough in order to
apply Proposition~\ref{fBrunnMinkowski-stab} with $M_*=M_\eta$ and $t=7n^{3/2}\varepsilon^{2/5}$.
We deduce the existence of an
 $(n-d)$-dimensional compact convex set $C\subset L^\bot$, and complementary $d$-dimensional compact convex set $M$ 
such that
$$
\delta_{\rm vol}(K,C+M)\leq \gamma_v\varepsilon^{1/5}.
$$
In turn Corollary~\ref{deltahomvol} implies that
$$
\delta_{\rm hom}(K,C+M)\leq \gamma_h\varepsilon^{1/(5n)},
$$
completing the proof of Theorem~\ref{Henk-Linke-stab}.
\end{proof}

\begin{proof}[Proof of Theorem~\ref{Henk-Linke-stab1}] Here we have  $d=1$. 
%START
We may assume that $L=\R$, and
$K|L=[-a,b]$ where $0<a\leq b$. Since $c(K)=o$ implies $-K\subset nK$ according to B. Gr\"unbaum \cite{Gru60},
we have $b\leq na$.
%END

We set $\eta=\varepsilon^{2/3}$, and use again the notation of Lemma~\ref{nablafstab}. We deduce  from (\ref{HenkLinke}) and Lemma~\ref{nablafstab}  that
\begin{equation}
\label{Ketacomplement1}
 \V(K\backslash K_\eta)<\varepsilon^{1/3} \V(K)< \V(K)/(2^ne),
\end{equation}
and from Lemma~\ref{fxf0eta} that
\begin{equation}
\label{Metaf1}
e^{-t}\leq \frac{f(x)}{f(o)}\leq e^{t}
\mbox{ \ for $t=7n^{3/2}\varepsilon^{1/3}$ and $x\in M_\eta$.}
\end{equation}
It follows from
Lemma~\ref{centroidhom} and  (\ref{Ketacomplement1}) that  
$\frac12[-a,b]\subset M_\eta$, therefore the concavity of $\ln f$ and (\ref{Metaf1}) yield that
\begin{equation}
\label{abfup}
f(x)\leq e^{2t}f(o) \mbox{ \ for $x\in[-a,b]$.}
\end{equation}
Let $M_\eta=[-a_\eta,b_\eta]$ for $a_\eta,b_\eta>0$.
Since $K\backslash K_\eta$ contains two cones, one with base $K(-a_\eta)$ and height $a-a_\eta$, and
one with base $K(b_\eta)$ and height $b-b_\eta$, we get by
(\ref{Metaf1}), (\ref{Ketacomplement1}) and (\ref{abfup}) that 
\begin{eqnarray*}
\frac{a-a_\eta+b-b_\eta}n\,e^{-t}f(o)&\leq &\frac{a-a_\eta+b-b_\eta}n(f(-a_\eta)+f(b_\eta))\\
&\leq& \V(K\backslash K_\eta)<\varepsilon^{\frac13} \V(K)\leq\varepsilon^{\frac13}e^{2t}f(o)(a+b).
\end{eqnarray*}
In particular, 
$$
{\mathcal H}_1(M_\eta)=a_\eta+b_\eta>(1-2n\varepsilon^{\frac13})(a+b).
$$
Here and below $\gamma_1,\gamma_2,\ldots$ denote positive constants depending on $n$.
We deduce by (\ref{Metaf1}) that if $\varepsilon$ is small enough, then
\begin{eqnarray*}
af(-a)+bf(b)&=&n\V_K(L\cap S^{n-1})>(1-\varepsilon)\V(K)>(1-\varepsilon){\mathcal H}_1(M_\eta)e^{-t}f(o)\\
&>& (1-\gamma_1\varepsilon^{\frac13})(a+b)f(o).
\end{eqnarray*}
Since $b\geq a$ and  $\frac{a}{a+b}\geq\frac1{n+1}$ by $b\leq na$, \eqref{abfup} implies that if $\varepsilon$ is small enough, then
$$
f(-a),f(b)\geq (1-\gamma_2\varepsilon^{\frac13})f(o).
$$
As $\ln f$ is concave, we have
\begin{equation*}
\label{fablow}
f(x)\geq (1-\gamma_2\varepsilon^{\frac13})f(o) \mbox{ \ for $x\in[-a,b]$.}
\end{equation*}
However $\frac{a}{a+b}\,C(b)+\frac{b}{a+b}\,C(-a)\subset C(o)$, where
$C(x)=K\cap(x+L^\perp)$.  Thus Lemma~\ref{BrunnMinkowski-stab} yields that
\begin{equation}
\label{KoKend}
\delta_{\rm vol}(C(o),C(-a))\leq \gamma_3\varepsilon^{\frac1{6}}\mbox{ \ and \ }
\delta_{\rm vol}(C(o),C(b))\leq \gamma_3\varepsilon^{\frac1{6}}.
\end{equation}
Hence, with 
$$
\widetilde{C}=(C(-a)-\tilde{x})\cap (C(b)-\tilde{y}) \mbox{ \ for $\tilde{x}=c(C(-a))$ and $\tilde{y}=c(C(b))$.}
$$
It follows from (\ref{abfup}) and (\ref{KoKend}) that
$$
[\tilde{x},\tilde{y}]+\widetilde{C}\subset K\mbox{ \ and \ }\V(K)\leq 
(1+\gamma_4\varepsilon^{\frac1{6}})\V([\tilde{x},\tilde{y}]+\widetilde{C}).
$$
Using Lemma~\ref{nocentroid}, we replace $\widetilde{C}$ by a suitably smaller homothetic copy $C$ such that $c(C)=o$, and obtain that
there exist $x\in \tilde{x}+\widetilde{C}$ and $y\in \tilde{y}+\widetilde{C}$ satisfying $o\in [x,y]$,
$e^{-s}\|x\|\leq \|y\|\leq e^{s}\|x\|$ for $s=\gamma_5\varepsilon^{\frac1{6}}$, and
$$
[x,y]+C\subset K\mbox{ \ and \ }\V(K)\leq 
(1+\gamma_6\varepsilon^{\frac1{6}})\V([x,y]+C).
$$
Finally, if $z\in [-a,b]$, then $-z/n\in[-a,b]$ and
$\frac{1}{n+1}\,C(z)+\frac{n}{n+1}\,C(-z/n)\subset C(o)$. Therefore Lemma~\ref{centroidhom},  
Lemma~\ref{BrunnMinkowski-stab} and the estimates above imply
$$
K\subset [x,y]+(1+\gamma_5\varepsilon^{\frac1{6n}})C,
$$
completing the proof of Theorem~\ref{Henk-Linke-stab1}.
\end{proof}

\section{Stability of the $\U$-functional  $\U(K)$ \label{secU(K)}}

Let $m\in\{1,\ldots,n\}$. In this section, a finite sequence
$u_1,\ldots,u_m$ always denote points of $S^{n-1}$, and by $\lin\{X\}$
we denote the linear hull of a set $X$.
As in  \cite{HeL14},  we define $\sigma_m(K)>0$ by
$$
\sigma_m(K)^m=\int_{u_1\wedge\ldots\wedge u_m\neq 0}1\,d\V_K(u_1)\cdots d\V_K(u_m).
$$
In particular, $\sigma_1(K)=\V(K)$, $\sigma_n(K)=U(K)$, and for $m<n$, we have 
\begin{equation}
\begin{split}
\label{sigmainduction}
&\sigma_{m+1}(K)^{m+1}=\\ 
&\,\int_{u_1\wedge\ldots\wedge u_m\neq 0}
\left(\V(K)-\V_K(S^{n-1}\cap{\rm lin}\{u_1,\ldots,u_m\})\right)
d\V_K(u_1)\cdots d\V_K(u_m).
\end{split}
\end{equation}
As $\V_K(S^{n-1}\cap{\rm lin}\{u_1,\ldots,u_m\})\leq \frac{m}n\,\V(K)$ 
for linearly independent $u_1,\ldots,u_m$ according to Theorem~\ref{Henk-Linke-body}, we deduce that
\begin{equation}
\label{sigmainduction0}
\sigma_{m+1}(K)^{m+1}\geq \left(1-\frac{m}n\right)\V(K)\sigma_m(K)^m.
\end{equation}
Therefore the inequality of Theorem~\ref{Henk-Linke-UK} follows from
$$
\U(K)^n\geq \frac1n\,\V(K)\sigma_{n-1}(K)^{n-1}\geq \ldots\geq \frac{(n-1)!}{n^{n-1}}\,\V(K)^{n-1}\sigma_1= \frac{n!}{n^n}\,\V(K)^n.
$$

Now we assume that 
$$
\U(K)\leq (1+\varepsilon)\frac{(n!)^{1/n}}{n}\,\V(K) 
$$
where $\varepsilon>0$ is small enough to satify all estimates below. In particular,
$\varepsilon<\frac1{4n^3}\,\tilde{\varepsilon}_0$, where 
$\tilde{\varepsilon}_0$ comes from  Theorem~\ref{Henk-Linke-stab1}.
Applying  \eqref{sigmainduction} for $m=1$,  \eqref{sigmainduction0} for $m\geq 2$,
and using $(1+\varepsilon)^n\frac{n-1}{n}<\frac{n-1}{n}+2n\varepsilon$
gives
\begin{equation}
\label{sigmainduction1}
\int_{S^{n-1}}(\V(K)-\V_K(S^{n-1}\cap{\rm lin}\{u\}))\,d\V_K(u)\leq   \left(\frac{n-1}{n}+2n\varepsilon\right)\V(K)^2.
\end{equation}
For any $X\subset S^{n-1}$, there exists $u\in X$ maximizing $\V_K(S^{n-1}\cap{\rm lin}\{u\})$ because different $1$-dimensional subspaces have disjoint intersections with $S^{n-1}$.
We consider linearly independent $v_1,\ldots,v_n\in S^{n-1}$  such that $v_1$ maximizes $\V_K(S^{n-1}\cap{\rm lin}\{u\})$
for $u\in S^{n-1}$, and  $v_i$ maximizes $\V_K(S^{n-1}\cap{\rm lin}\{u\})$
for all $u\in S^{n-1}\backslash{\rm lin}\{v_1,\ldots,v_{i-1}\}$ if $i=2,\ldots,n$. Let $L={\rm lin}\{v_1,\ldots,v_{n-1}\}$,
and let $\V_K(S^{n-1}\cap{\rm lin}\{v_n\})=(\frac1n-t)\V(K)$, and hence
$t\in[0,\frac1n]$ (cf.~\eqref{scc}). Thus we have 
\begin{eqnarray}
\label{VKvi}
\V_K(S^{n-1}\cap{\rm lin}\{v_i\})&\geq &\mbox{$(\frac1n-t)\V(K)$  \ for $i=1,\ldots,n$},\\
\label{VKvnu}
\V_K(S^{n-1}\cap{\rm lin}\{u\})&\leq &\mbox{$(\frac1n-t)\V(K)$  \ for $u\in S^{n-1}\backslash L$.}
\end{eqnarray}
We deduce from (\ref{sigmainduction1}),  (\ref{VKvnu}) and
$\V_K(S^{n-1}\cap{\rm lin}\{u\})\leq \frac1n\,\V(K)$ for $u\in S^{n-1}\cap L$ that
$$
(\mbox{$\frac{n-1}n$}+t)\V(K)\V_K(S^{n-1}\backslash L)+
\mbox{$\frac{n-1}n$}\,\V(K)\V_K(S^{n-1}\cap L)\leq   \left(\mbox{$\frac{n-1}{n}$}+2n\varepsilon\right)\V(K)^2.
$$
Since $\V_K(S^{n-1}\backslash L)\geq\frac1n\,\V(K)$ according to Theorem~\ref{Henk-Linke-body}, we conclude that
$t\leq 2n^2\varepsilon$. In particular, $\V_K(S^{n-1}\cap{\rm
  lin}\{v_i\})\geq (\frac1n-2n^2\varepsilon)\V(K)$ 
for $i=1,\ldots,n$ by (\ref{VKvi}).

From  Theorem~\ref{Henk-Linke-stab1} we find for $i=1,\ldots,n$,
that there exist an 
 $(n-1)$-dimensional compact convex set $C_i\subset v_i^\bot$ with $c(C_i)=o$, and $x_i,y_i\in \partial K$ 
such that $y_i=-e^{s_i}x$, where $|s_i|<n\tilde{\gamma}_v\varepsilon^{\frac1{6}}$, and for $i=1,\ldots,n$, we have
\begin{eqnarray}
\label{CixyinK}
[x_i,y_i]+C_i&\subset &K \\
\label{CixyKvol}
\V(K\backslash([x_i,y_i]+C_i))&\leq &n\tilde{\gamma}_v\varepsilon^{\frac1{6}}\V(K)\\
\label{CixyKhom}
K&\subset& [x_i,y_i]+(1+2\tilde{\gamma_h}\varepsilon^{\frac1{6n}})C_i.
\end{eqnarray}
We may assume that $v_i$ is an exterior normal at $x_i$, $i=1,\ldots,n$. After a linear transformation of $K$, we may also assume that $v_1,\ldots,v_n$ form and orthonormal system, and $\langle v_i,x_i-y_i\rangle=2$. In particular,
\begin{equation}
\label{xynorm}
e^{-\tau}<\langle v_i,x_i\rangle,\langle -v_i,y_i\rangle< e^{\tau},\;\tau=n\tilde{\gamma}_v\varepsilon^{\frac1{6}}.
\end{equation}
In what follows, we write $\gamma_1,\gamma_2,\ldots$ for positive constants depending on $n$ only.
It follows from combining (\ref{CixyinK}), (\ref{CixyKvol}) and (\ref{xynorm}) that
\begin{equation}
\label{CiCjarea}
1-\gamma_1\varepsilon^{\frac1{6}}<{\mathcal H}_{n-1}(C_i)/{\mathcal H}_{n-1}(C_j)<
1+\gamma_1\varepsilon^{\frac1{6}}\mbox{ \ for $i,j\in\{1,\ldots,n\}$}.
\end{equation}
For any $i\neq j\in\{1,\ldots,n\}$, we write
\begin{eqnarray*}
w_i(v_j)&=&h_{C_i}(v_j)+h_{C_i}(-v_j),\\
a_i(v_j)&=&\max\left\{ {\mathcal H}_{n-2}(C_i\cap (tv_j+v_j^\bot)):\,-h_{C_i}(-v_j)\leq t\leq h_{C_i}(v_j)\right\},
\end{eqnarray*}
and recall that $h_{C_i}(x)$ denotes the support function.
Hence $w_i(v_j)$ is the width of $C_i$ in the direction of $v_j$. Calculating 
${\mathcal H}_{n-1}(C_i)$ by integrating along $\R v_j$ leads to 
\begin{equation}
\label{Ciwjajarea}
\mbox{$\frac1{n-1}$}\,w_i(v_j)a_i(v_j)\leq {\mathcal H}_{n-1}(C_i)\leq w_i(v_j)a_i(v_j)
\mbox{ \ for $i\neq j\in\{1,\ldots,n\}$}.
\end{equation}

Let $p\neq q\in\{1,\ldots,n\}$. We choose $t_1\geq t_*\geq t_0$ such that
\begin{eqnarray*}
\langle v_p,t_1x_p\rangle &=& h_{C_q}(v_p)\\
\langle -v_p,t_0x_p\rangle &=& h_{C_q}(-v_p)\\
{\mathcal H}_{n-2}(C_q\cap (t_*x_p+v_p^\bot))&=&a_q(v_p).
\end{eqnarray*}
It follows from (\ref{xynorm}) and (\ref{CixyKhom})  that
\begin{eqnarray}
\label{t1t0diff}
t_1-t_0& > &w_q(v_p)/2,\\
\nonumber
C_q\cap (t_*x_p+v_p^\bot)&\subset & t_*x_p+(1+2\tilde{\gamma_h}\varepsilon^{\frac1{6n}})C_p.
\end{eqnarray}
Therefore $a_p(v_q)\geq (1+2\tilde{\gamma_h}\varepsilon^{\frac1{6n}})^{-(n-2)}a_q(v_p)$,
and hence interchanging the role of $p$ and $q$ leads to
$$
1-\gamma_2\varepsilon^{\frac1{6n}}<a_q(v_p)/a_q(v_p)<
1+\gamma_2\varepsilon^{\frac1{6n}}.
$$
We deduce from (\ref{CiCjarea}) and (\ref{Ciwjajarea}) that
\begin{equation}
\label{wpwq}
\frac1{2n}<\frac{w_p(v_q)}{w_q(v_p)}<2n.
\end{equation}
Now combining (\ref{CixyinK}) and (\ref{CixyKhom}) shows that
\begin{equation}
\label{hpvq}
h_{C_p}(v_q)\leq \langle x_q-t_mx_p,v_q\rangle \leq (1+2\tilde{\gamma_h}\varepsilon^{\frac1{6n}})h_{C_p}(v_q)
\mbox{ \ for $m=0,1$},
\end{equation}
and hence
$$
|\langle(t_1-t_0)x_p,v_q\rangle|\leq 2\tilde{\gamma_h}\varepsilon^{\frac1{6n}} h_{C_p}(v_q)
<2\tilde{\gamma_h}\varepsilon^{\frac1{6n}} w_p(v_q).
$$
Applying \eqref{t1t0diff}, \eqref{wpwq}, and the analoguous argument to $y_q$ implies that
\begin{equation}
\label{xpvq}
|\langle x_p,v_q\rangle|,|\langle y_p,v_q\rangle|\leq \gamma_3\varepsilon^{\frac1{6n}}.
\end{equation}

Let $P$ be the parallepiped
$$
P=\{x\in\R^n:\,\langle x,v_i\rangle\leq \langle x_i,v_i\rangle,\;\langle x,-v_i\rangle\leq \langle y_i,-v_i\rangle,\;
i=1,\ldots,n\},
$$
and hence each facet of $P$ contains one of $x_i+C_i$, $y_i+C_i$, $i=1,\ldots,n$.
We claim that
\begin{equation}
\label{Pclaim}
\mbox{$\frac1{4n}$}\,P\subset K.
\end{equation}
We suppose that (\ref{Pclaim}) does not hold, and seek a contradiction. Possibly reversing the orientation of some of the $v_i$, we may asssume that
\begin{equation}
\label{Pclaimcontra}
z=\frac1{4n}\sum_{i=1}^n\langle x_i,v_i\rangle\, v_i\not\in K.
\end{equation}
In particular, $\|z\|\leq \frac1{2\sqrt{n}}$ by (\ref{xynorm}), and there exists $u\in S^{n-1}$ such that
\begin{equation}
\label{Pclaimu}
\langle u,z\rangle>\langle u,x\rangle\mbox{ \ for $x\in K$.}
\end{equation}
There exists $v_p$ such that $|\langle u,v_p\rangle|\geq 1/\sqrt{n}$, and hence  (\ref{xynorm}) and (\ref{xpvq}) yield that
$\langle u,x_p\rangle\geq\frac1{\sqrt{n}}-\gamma_4\varepsilon^{\frac1{6n}}$ if $\langle u,v_p\rangle\geq 1/\sqrt{n}$,
and $\langle u,y_p\rangle\geq\frac1{\sqrt{n}}-\gamma_4\varepsilon^{\frac1{6n}}$ if $\langle u,v_p\rangle\leq -1/\sqrt{n}$.
However $\langle u,z\rangle\leq \|z\|\leq\frac1{2\sqrt{n}}$, contradicting (\ref{Pclaimcontra}). Therefore we conclude 
(\ref{Pclaim}).

For $i=1,\ldots,n$, let
$$
\Xi_{2i-1}=[o,x_i+C_i]\mbox{ \ and \ } \Xi_{2i}=[o,y_i+C_i].
$$
Since the basis of the cones $\Xi_1,\ldots,\Xi_{2n}$ lie in different
facets of $P$, the interiors of $\Xi_1,\ldots,\Xi_{2n}$ are pairwise
disjoint. By (\ref{CixyKvol}) and (\ref{xynorm}) we know
$\V(\Xi_j)\geq (\frac1{2n}-\gamma_5\varepsilon^{\frac1{6}})\V(K)$, and
so we get 
$$
\V(\Xi)>(1-2n\gamma_5\varepsilon^{\frac1{6}})\V(K)\mbox{ \ for $\Xi=\bigcup_{j=1}^{2n}\Xi_j\subset K$}.
$$
We conclude from (\ref{Pclaim}) that
$$
\V(P\backslash K)\leq \V(P\backslash \Xi)=(4n)^n V\left((\mbox{$\frac1{4n}$}\,P)\backslash \Xi\right)
\leq (4n)^n V\left(K\backslash \Xi\right)\leq \gamma_6\varepsilon^{\frac1{6}}\V(K).
$$
Therefore (\ref{CixyKvol}) yields
$$
{\mathcal H}_{n-1}((P\cap v_i^\bot)\backslash C_i)\leq \gamma_7\varepsilon^{\frac1{6}}{\mathcal H}_{n-1}(C_i),\;
i=1,\ldots,n,
$$
and Lemma~\ref{centroidhom} implies that $(1-\gamma_8\varepsilon^{\frac1{6n}})P\subset K$,
completing the proof of Theorem~\ref{U(K)stab}.

%\noindent{\bf Acknowledgement } We are grateful to Rolf Schneider for various ideas shaping this paper. We also ackowledge fruitful discussions with David Preiss about the Gau\ss-Green theorem.

\end{document}